\newtheorem{theorem}{Theorem}[section]
\newtheorem{lemma}[theorem]{Lemma}
\newtheorem{corollary}[theorem]{Corollary}
\newtheorem{proposition}[theorem]{Proposition}
\newtheorem{definition}[theorem]{Definition}
\newtheorem{example}[theorem]{Example}
\newtheorem{remark}[theorem]{Remark}
\newcommand{\beqa}{\begin{eqnarray*}}
\newcommand{\eeqa}{\end{eqnarray*}}
\newcommand{\field}[1]{\mathbb{#1}}
\newcommand{\bR}{\field{R}}        
\newcommand{\bZ}{\field{Z}}        
\newcommand{\bC}{\field{C}}        
\def\la{\lambda}
\def\cF{\mathcal{F}}              
\def\cS{\mathcal{S}}
\def\cB{\mathcal{B}}
\def\cA{\mathcal{A}}
\def\cI{\mathcal{I}}
\def\rd{\bR^d}
\def\rdd{{\bR^{2d}}}
\def\lrd{L^2(\rd)}
\def\lrdd{L^2(\rdd)}
\def\intrd{\int_{\rd}}
\def\intrdd{\int_{\rdd}}
\def\R{\right)}
\def\<{\left<}
\def\>{\right>}
\def\mv1{M_v^1}
\def\mn{(m,n)}
\def\mn'{(m',n')}
\newcommand{\norm}[1]{\lVert#1\rVert}
\def\R{\mathbb{R}}
\def\Ren{\mathbb{R}^d}
\def\Qs{{Q_s}}
\def\f{\varphi}
\def\Sn2{S_{2}(L^{2}(\Ren))}
\def\S1{S_{1}(L^{2}(\Ren))}
\def\sig00{\sigma_{0,0}}
\def\la{\langle}
\def\ra{\rangle}
\begin{document}

\begin{abstract} 
	In this work, we extend Wigner's original framework to analyze linear operators by examining the relationship between their Wigner and Schwartz kernels. Our approach includes the introduction of (quasi-)algebras of Fourier integral operators (FIOs), which encompass FIOs of type I and II. The symbols of these operators reside in (weighted) modulation spaces, particularly in Sj{\"o}strand's class, known for its favorable properties in time-frequency analysis. One of the significant results of our study is demonstrating the inverse-closedness of these symbol classes.
	
	  Our analysis includes fundamental examples such as pseudodifferential operators and Fourier integral operators related to Schr{\"o}dinger-type equations. These examples typically feature classical Hamiltonian flows governed by linear symplectic transformations $S \in Sp(d, \mathbb{R})$. The core idea of our approach is to utilize the Wigner kernel to transform a Fourier integral operator $ T $ on $ \mathbb{R}^d $ into a pseudodifferential operator $ K$ on $ \mathbb{R}^{2d}$. This transformation involves a symbol $\sigma$  well-localized around the manifold defined by $ z = S w $.
\end{abstract}

\title{Understanding of linear operators through Wigner analysis}

\author{Elena Cordero}
\address{Universit\`a di Torino, Dipartimento di Matematica, via Carlo Alberto 10, 10123 Torino, Italy}
\email{elena.cordero@unito.it}

\author{Gianluca Giacchi}
\address{Universit\'a di Bologna, Dipartimento di Matematica, Piazza di Porta San Donato 5, 40126 Bologna, Italy; University of Lausanne, Switzerland; HES-SO School of Engineering, Rue De L'Industrie 21, Sion, Switzerland; Centre Hospitalier Universitaire Vaudois, Switzerland}
\email{gianluca.giacchi2@unibo.it}

\author{Edoardo Pucci}
\address{Universit\`a di Torino, Dipartimento di Matematica, via Carlo Alberto 10, 10123 Torino, Italy}
\email{edoardo.pucci@edu.unito.it}

\thanks{}
\subjclass{Primary 35S30; Secondary 47G30}

\subjclass[2010]{35S05,35S30,
47G30}
\keywords{Wigner transform, Schr\"odinger equation, , Fourier transform, symplectic group, metaplectic operators}
\maketitle

\section{Introduction}
The focus of this study is the  analysis of linear operators using Wigner distributions, which are a way of representing functions on phase space. 
The original idea stems from Wigner's paper in 1932 \cite{Wigner} who first introduced the Wigner distribution and applied it to represent the phase-space concentration of 
Schr\"{o}dinger type propagators.
Namely, given two signals $f,g\in\lrd$ the (cross-)Wigner distribution is defined by
\begin{equation}\label{WigDist}
	W(f,g)(x,\xi)=\int_{\rd}f(x+t/2)\overline{g(x-t/2)}e^{-2\pi i\xi t}dt, \qquad x,\xi\in\rd. 
\end{equation}
If 	$f=g$  we simply write $Wf= W(f,f)$ and call it the Wigner distribution of $f$.

By the Schwartz kernel theorem (see e.g. \cite{Rudin}) any continuous linear operator $T:\,\cS(\rd)\to \cS'(\rd)$ admits a unique distribution  $k_T\in\cS'(\rdd)$,
 called the Schwartz kernel of  $T$, such that for any test function $f\in\cS(\rd)$,
\begin{equation}
	Tf(x)=\intrd k_T(x,y) f(y)\,dy.
\end{equation}

The density of the subspace $\mbox{span}\{W(f,g):f,g\in\cS(\rd)\}$ in $\cS(\rdd)$ (cf. \cite[Lemma 1.1]{AEFN06}) allows to introduce the Wigner kernel of an operator $T$ as above. 
\begin{definition}\label{1.1}
Given a continuous linear operator $T:\,\cS(\rd)\to \cS'(\rd)$, we define the  linear operator $K:\cS(\rdd)\to \cS'(\rdd)$  by
\begin{equation}\label{I3}
	KW(f,g)=W(Tf,Tg),  \qquad f,g\in\cS(\rd).
\end{equation}
Its  Schwartz kernel $k$ is called the \emph{Wigner kernel}  of $T$:
\begin{equation}\label{I4}
	KW(f,g)(z) = \intrdd k(z,w) W(f,g)(w)\,dw,\quad z\in\rdd,\quad f,g\in\cS(\rd).
\end{equation}
\end{definition}
Such an operator $K$ (resp. Wigner kernel $k$) exists and is unique, as proved in Theorem 3.3 of \cite{CRGFIO1}: 

 \begin{theorem}\label{3.3}
For any continuous linear  $T:\cS(\rd)\to\cS'(\rd)$ with Schwartz  kernel $k_T\in\cS'(\rdd)$, there exists a unique linear operator $K:\cS(\rdd)\to \cS'(\rdd)$ with Schwartz kernel $k\in\cS'(\bR^{4d})$ such that \eqref{I3} holds. Therefore, every continuous linear operator $T:\cS(\rd)\to\cS'(\rd)$ has a unique Wigner kernel. Besides,
	\begin{equation}\label{nuclei}
		k=\mathfrak{T}_pWk_T,
	\end{equation}
	where $\mathfrak{T}_pF(x,\xi,y,\eta)=F(x,y,\xi,-\eta)$.
\end{theorem}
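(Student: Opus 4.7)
The plan is to prove existence by directly constructing the Wigner kernel $k$ via the formula \eqref{nuclei}, then verify the identity \eqref{I3}, and finally use the density of $\mathrm{span}\{W(f,g):f,g\in\cS(\rd)\}$ in $\cS(\rdd)$ (invoked just before Definition \ref{1.1}) to conclude uniqueness.

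For existence, I would first note that the Wigner transform extends to a continuous map $\cS'(\rdd)\to\cS'(\bR^{4d})$ via the factorization $WF=\cF_2\circ J(F\otimes\overline F)$, where $J$ is the change of variables $(X,T)\mapsto(X+T/2,X-T/2)$ and $\cF_2$ is the partial Fourier transform in the second variable; both operations are continuous on tempered distributions. Applying this to $F=k_T\in\cS'(\rdd)$ gives $Wk_T\in\cS'(\bR^{4d})$, so $k:=\mathfrak{T}_p Wk_T\in\cS'(\bR^{4d})$ is well defined. The Schwartz kernel theorem then produces a unique continuous operator $K:\cS(\rdd)\to\cS'(\rdd)$ with this kernel.

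To verify \eqref{I3}, I would first suppose $k_T\in\cS(\rdd)$ (so that Fubini applies and all integrals converge absolutely). Writing
\[
W(Tf,Tg)(x,\xi)=\iiint k_T(x+t/2,y)\overline{k_T(x-t/2,y')}f(y)\overline{g(y')}e^{-2\pi i\xi\cdot t}\,dy\,dy'\,dt,
\]
change variables $y=u+s/2$, $y'=u-s/2$ and use the Fourier inversion identity
\[
f(u+s/2)\overline{g(u-s/2)}=\int W(f,g)(u,\eta)\,e^{2\pi i\eta\cdot s}\,d\eta.
\]
Reordering integrals, the kernel against $W(f,g)$ turns out to be
\[
\iint k_T(x+t/2,u+s/2)\overline{k_T(x-t/2,u-s/2)}\,e^{-2\pi i(\xi\cdot t-\eta\cdot s)}\,dt\,ds,
\]
which is exactly $Wk_T(x,u,\xi,-\eta)=\mathfrak{T}_p Wk_T(x,\xi,u,\eta)$. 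The general case $k_T\in\cS'(\rdd)$ is then obtained by approximating $k_T$ by Schwartz functions in the weak-$\ast$ topology of $\cS'(\rdd)$ and passing to the limit, using the sequential continuity of all the maps involved (Wigner transform, tensor product, operator action on Schwartz functions).

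For uniqueness, suppose two operators $K_1,K_2:\cS(\rdd)\to\cS'(\rdd)$ both satisfy \eqref{I3}. Their difference vanishes on $\mathrm{span}\{W(f,g):f,g\in\cS(\rd)\}$, which is dense in $\cS(\rdd)$; continuity forces $K_1=K_2$, and the Schwartz kernel theorem gives uniqueness of $k$. The main obstacle is the rigorous distributional interpretation at both ends: making sense of $Wk_T$ when $k_T$ is only a tempered distribution, and of $W(Tf,Tg)$ when $Tf,Tg\in\cS'(\rd)$. Both difficulties dissolve once one adopts the factorization $W=\cF_2\circ J\circ(\cdot\otimes\overline{\cdot})$ as the working definition on $\cS'$, after which the density/continuity argument handles the passage from the Schwartz case to the general one.
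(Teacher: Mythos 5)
Your proposal is essentially correct, and it supplies a proof that this paper itself does not contain: Theorem \ref{3.3} is stated here with a citation to Theorem 3.3 of \cite{CRGFIO1}, and its restatement as Theorem \ref{thmWkernel} in Section 3 is also left without proof. The computational core of your argument --- expanding $W(Tf,Tg)$ via the Schwartz kernel, substituting $y=u+s/2$, $y'=u-s/2$, and using Fourier inversion $f(u+s/2)\overline{g(u-s/2)}=\int W(f,g)(u,\eta)e^{2\pi i\eta\cdot s}\,d\eta$ --- is the natural and standard way to see that the inner double integral is precisely $Wk_T(x,u,\xi,-\eta)$, i.e.\ $\mathfrak{T}_pWk_T(x,\xi,u,\eta)$; I checked the sign bookkeeping and it is right. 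Your uniqueness argument via density of $\mathrm{span}\{W(f,g):f,g\in\cS(\rd)\}$ in $\cS(\rdd)$ and the Schwartz kernel theorem is exactly the mechanism the paper relies on implicitly.

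Two small points worth tightening. First, the map $F\mapsto F\otimes\overline F$ from $\cS'(\rdd)$ to $\cS'(\bR^{4d})$ is quadratic, not linear, so ``sequential continuity of the tensor product'' deserves a word: if $F_n\to F$ weakly-$*$, then $(F_n)$ is equicontinuous (Banach--Steinhaus on the Fr\'echet space $\cS$), and writing $F_n\otimes\overline{F_n}-F\otimes\overline F=(F_n-F)\otimes\overline{F_n}+F\otimes(\overline{F_n}-\overline F)$ shows the difference tends to zero. This is the same hypocontinuity fact underlying Proposition \ref{propcont}(iii), so you could also cite it directly. Second, when passing from $k_T\in\cS(\rdd)$ to $k_T\in\cS'(\rdd)$, you should note explicitly that $k_{T_n}\to k_T$ weakly-$*$ implies $T_nf\to Tf$ in $\cS'(\rd)$ for each fixed $f\in\cS(\rd)$ (take pairings against $g\otimes\bar f$), which is what feeds into $W(T_nf,T_ng)\to W(Tf,Tg)$ via Proposition \ref{propcont}(iii). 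With those two remarks spelled out, the argument is complete and rigorous.
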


This definition encompasses the original idea of Wigner \cite{Wigner} of considering an operator $K$ on $\cS(\bR^{2d})$ satisfying the relation:
\begin{equation}\label{WignersWkernel}
	W(Tf)=K(Wf), \qquad f\in\cS(\rd),
\end{equation}
which reads, in its integral form, as:
\begin{equation}\label{WignersWkernelIntegral}
	W(Tf)(z)=\int_{\rdd}k(z,w)Wf(w)dw, \qquad f\in\cS(\rd).
\end{equation}
This is the case $f=g$ in  Definition \ref{1.1}.

Our main interest resides in using the Wigner kernel to extract the time-frequency content of operators $T$ which represents propagators for the Schr\"{o}dinger equations.  This idea, originated by  Wigner \cite{Wigner}, was followed by Kirkwood \cite{Kirkwood}, and Moyal \cite{Moyal}.
Later, Cohen and Galleani \cite{CG2,CG1} applied the Wigner distribution 
to classical systems including  acoustics, speech processing, machine monitoring, biomedical signals etc.,  when the governing equation of the variable is a linear ordinary or partial differential equation. They highlighted that many of the methods
that have been developed for the Wigner distribution
in regard to the Schr\"{o}dinger equation should be applicable to other wave equations.
In the same spirit, Mele and Oliaro \cite{MO} studied  regularity of partial differential equations with polynomial coefficients  via Wigner distribution  and proved regularity properties for these classes.

This line of investigation was started by two of us in the works \cite{CGRPartII2022,CRGFIO1}.  Here we continue this research: our scope is to build up  classes of linear operators which can be successfully analysed via Wigner distributions. The core of this study will be the time-frequency concentration of the Wigner kernel of operators.
To measure such concentration we will use modulation spaces.\par
Given a weight function $m$ on the phase space $\rdd$, $0<p,q\leq\infty$, the modulation space $M^{p,q}_m(\rd)$ was first introduced  by H. Feichtinger in the 80's \cite{F1} for the Banach case and by Galperin and Samarah \cite{Galperin2004} and Kobayashi \cite{Kob2005} for the quasi-Banach setting. These spaces were originally defined in terms of the short-time Fourier transform (STFT). Namely, fix a window function $g\in\cS(\rd)\setminus\{0\}$, the STFT of a function/tempered distribution $f$ is defined by
\begin{equation}\label{STFTp}
	V_gf(x,\xi)=\int_{\rd}f(t)\bar g(t-x)e^{-2\pi i\xi\cdot t}dt, \qquad  \ x,\xi\in\rd. 
\end{equation}
A tempered distribution $f\in\cS'(\rd)$ belongs to $M^{p,q}_m(\rd)$ if and only if
$V_gf$ is in the weighted mixed norm space $L^{p,q}_m(\rdd)$ (see the next section for details). Moreover, their norms are equivalent:
\[
\|f\|_{ M^{p,q}_m} \asymp\| V_gf\|_{L^{p,q}_m}. 
\]

In this paper we construct and investigate Wiener subalgebras consisting of Fourier integral operators which generalize  the classes introduced in \cite{CRGFIO1} from symbols in the H\"ormander class $S^0_{0,0}(\rdd)$  to symbols in modulation spaces (including the  Sj\"ostrand Class \cite{A76}). 
Basic examples of such operators are Schr\"odinger propagators $e^{itH}$ with
Hamiltonians\begin{equation}\label{Elena0}
	H=a(x,D)+\sigma(x,D),
\end{equation}
where $a(x,D)$ is the quantization of a quadratic form and $\sigma(x,D)$ is a pseudodifferential operator with a rough symbol $\sigma\in M^{\infty,q}_{1\otimes v_s}(\rdd)$.
The propagators above are special instances of  FIOs of type I and II. Namely, we will study FIOs of type I
\begin{equation}\label{FIO1}
	T_I f(x)=T_{I,\Phi,\sigma}f(x)=\int_{\rd} e^{2\pi i
		\Phi(x,\eta)}\sigma(x,\eta)\widehat{f}(\eta)\,d\eta \, ,
\end{equation}
where $\sigma\in M^{\infty,q}_{1\otimes v_s}(\rdd)$ and  the
real-valued phase $\Phi$ is a quadratic form. 
When $\Phi(x,\eta)=x\eta$ we recapture the
pseudodifferential operators in the Kohn-Nirenberg form
	\begin{equation}\label{PseudoDef}
	\sigma(x,D)f(x)=\int_{\rd}\sigma(x,\eta)\hat f(\eta)e^{2\pi ix\eta}d\eta, \qquad f\in\cS(\rd)
\end{equation}
(see Sections $2$ and $5$ for details).

The main insight of this work consists of 
downgrading FIOs as above  to pseudodifferential operators: we use the Wigner kernel to transform FIOs on $\mathbb{R}^d$ to pseudodifferential operators on $\mathbb{R}^{2d}$ with symbols well localized around certain manifolds.
 
For propagators $e^{itH}$ with $H$ as in \eqref{Elena0}, the related Hamiltonian flow is a linear symplectic map $S_t$, for every $t\in\bR$. Inspired by the Schr\"{o}dinger  case we give the following general definition.

\begin{definition}\label{defFIOclass}
	Consider $S\in \mbox{Sp}(d,\bR)$, $0<q\leq1$, and $s\geq0$. A continuous linear operator $T:\cS(\rd)\to\cS'(\rd)$ belongs to the class $FIO(S,M^{\infty,q}_{1\otimes v_s})$ if its Wigner kernel satisfies:
	\begin{equation}
		k(z,w)=h(z,Sw),
	\end{equation}
	where $h$ is the Schwartz kernel of a pseudodifferential operator (with respect to Kohn-Nirenberg quantization) $\sigma(z,D)$ with $\sigma\in M^{\infty,q}_{1\otimes v_s}(\bR^{4d})$.
\end{definition}
This definition exhibits a certain localization along the manifold $z=Sw$, measured in terms of modulation spaces $M^{\infty,q}_{1\otimes v_s}$.

 Here we will study these classes' properties.
\begin{theorem} \label{tc1old} Consider $0<q\leq1$, $s\geq0$.\par
	
	(i) An operator $T\in FIO(S,M^{\infty,q}_{1\otimes v_s})$ is bounded on $\lrd$.
	
	(ii)  If $T_i\in FIO(S_i,M^{\infty,q}_{1\otimes v_s})$, $i=1,2$, then $T_1 T_2\in
	FIO(S_1 S_2,M^{\infty,q}_{1\otimes v_s})$.
	
	(iii)  If $T\in FIO(S,M^{\infty,q}_{1\otimes v_s})$ is invertible on $L^2(\rd)$,
	then $T^{-1}\in FIO(S^{-1},M^{\infty,q}_{1\otimes v_s})$.
\end{theorem}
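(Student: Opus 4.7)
The unifying observation is that for any $T\in FIO(S,M^{\infty,q}_{1\otimes v_s})$, the Wigner operator $K$ factorizes as
\begin{equation*}
K=\sigma(z,D)\circ R_{S^{-1}},\qquad R_{S^{-1}}g(z):=g(S^{-1}z),
\end{equation*}
which follows from Definition \ref{defFIOclass} by the change of variable $w'=Sw$ (recall $|\det S|=1$ since $S\in\Spnr$). Because $S$ is symplectic, $R_{S^{-1}}$ is an isometry of $\lrdd$, and the block map $(z,\eta)\mapsto(S^{-1}z,S^{T}\eta)$ is itself a symplectic transformation of $\Refn$. Thus each of (i)--(iii) reduces to a statement about Sj\"ostrand-type pseudodifferential calculus on $\rdd$ composed with a linear dilation.

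Part (i) is then immediate: Moyal's identity yields $\|Tf\|_{\lrd}^{2}=\|W(Tf)\|_{\lrdd}=\|KWf\|_{\lrdd}$, and via the isometry $R_{S^{-1}}$ this bound reduces to the $\lrdd$-boundedness of $\sigma(z,D)$. The latter follows from the embedding $M^{\infty,q}_{1\otimes v_s}\hookrightarrow M^{\infty,1}$ valid for $0<q\leq 1$, $s\geq 0$ (via the Gabor characterization of modulation spaces together with $v_s\geq 1$), combined with Sj\"ostrand's classical $L^{2}$-boundedness theorem.

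For part (ii), direct composition gives
\begin{equation*}
K_1K_2=\sigma_1(z,D)\,R_{S_1^{-1}}\,\sigma_2(z,D)\,R_{S_2^{-1}}=\sigma_1(z,D)\,\widetilde{\sigma}_2(z,D)\,R_{(S_1S_2)^{-1}},
\end{equation*}
where $\widetilde{\sigma}_2(z,\eta):=\sigma_2(S_1^{-1}z,S_1^{T}\eta)$ arises from the conjugation identity $R_{S_1^{-1}}\sigma_2(z,D)=\widetilde{\sigma}_2(z,D)R_{S_1^{-1}}$ (a standard change-of-variable computation in the Kohn--Nirenberg calculus). By the metaplectic invariance of modulation spaces, $\widetilde{\sigma}_2\in M^{\infty,q}_{1\otimes v_s}$ with equivalent quasinorm; and the twisted Kohn--Nirenberg product (namely, the symbol composition $\sigma_1\,\#\,\widetilde{\sigma}_2$ defined by $[\sigma_1\,\#\,\widetilde{\sigma}_2](z,D)=\sigma_1(z,D)\widetilde{\sigma}_2(z,D)$) makes $M^{\infty,q}_{1\otimes v_s}$ a (quasi-)Banach algebra. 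Setting $\tau:=\sigma_1\,\#\,\widetilde{\sigma}_2\in M^{\infty,q}_{1\otimes v_s}$ and reading off the Wigner kernel of $K_1K_2=\tau(z,D)R_{(S_1S_2)^{-1}}$ gives $T_1T_2\in FIO(S_1S_2,M^{\infty,q}_{1\otimes v_s})$.

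Part (iii) is the core difficulty, and my plan is the classical reduction through the adjoint. First, a Wigner-side computation using $W(T^{*}f,T^{*}g)=K^{*}W(f,g)$ and $k^{*}(z,w)=\overline{k(w,z)}=\overline{h(w,Sz)}$, together with the standard change-of-quantization formulas that rewrite the amplitude $\overline{\sigma(Su,S^{-T}\eta)}$ as a Kohn--Nirenberg symbol in the same class, shows $T^{*}\in FIO(S^{-1},M^{\infty,q}_{1\otimes v_s})$. By (ii), $T^{*}T\in FIO(I,M^{\infty,q}_{1\otimes v_s})$, which is a genuine pseudodifferential operator on $\rdd$ with Sj\"ostrand-type symbol; invertibility of $T$ on $\lrd$ lifts to $T^{*}T$. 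The decisive ingredient is then the quasi-Banach analogue of Sj\"ostrand's Wiener lemma for the class $M^{\infty,q}_{1\otimes v_s}$, which yields $(T^{*}T)^{-1}\in FIO(I,M^{\infty,q}_{1\otimes v_s})$. A final application of (ii) gives $T^{-1}=(T^{*}T)^{-1}T^{*}\in FIO(S^{-1},M^{\infty,q}_{1\otimes v_s})$. The hardest step is precisely this inverse-closedness (Wiener property) of the pseudodifferential symbol class in the quasi-Banach regime $0<q\leq 1$, alongside the accompanying verification at the Wigner-kernel level that $T^{*}$ belongs to $FIO(S^{-1},M^{\infty,q}_{1\otimes v_s})$.
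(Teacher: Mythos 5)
Your proof is correct and follows essentially the same route as the paper: the factorization $K=\sigma(z,D)\circ R_{S^{-1}}$ is exactly the paper's $K=H\,\mathfrak{T}_{S^{-1}}$, part (i) is Moyal plus $L^2$-boundedness from \cite{CG2023quasi}, part (ii) is the conjugation-and-algebra argument, and part (iii) is the classical $T^*T$ reduction through the Wiener property of \cite{CG2023quasi}. Two small remarks. First, your adjoint kernel $\overline{k(w,z)}$ agrees with the paper's $k(w,z)$ because the Wigner kernel of any operator is automatically real-valued (the Wigner operator commutes with complex conjugation since $\overline{W(f,g)}=W(g,f)$); this is consistent but worth noting explicitly. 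Second, the step "invertibility of $T$ on $L^2(\bR^d)$ lifts to $T^*T$, and then the Wiener lemma yields $(T^*T)^{-1}\in FIO(I,\cdot)$" quietly uses that $L^2(\bR^d)$-invertibility of $P=T^*T$ implies $L^2(\bR^{2d})$-invertibility of the associated Wigner operator $K_P$ with $K_P^{-1}=K_{P^{-1}}$; this is not automatic and is precisely what the paper isolates as Lemma~\ref{elena}. Without it the Wiener lemma for the pseudodifferential calculus on $\bR^{2d}$ cannot be invoked, so you should state and justify it.
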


We will prove that FIOs of type I and II as above fall into this setting.
Key tool will be the relation between Schwartz and Wigner kernel in terms of time-frequency concentration:
	\begin{equation}\label{th23}
	\norm{k}_{M^p_{m}}\asymp\norm{k_T}^2_{M^p_{m}},\quad 0<p\leq\infty, 
\end{equation}
cf. Proposition \ref{prop34} below.
Finally, Theorem \ref{teofinal} will exhibit that Schr\"{o}dinger propagators $e^{itH}$ satisfy 
$$e^{itH}\in  FIO(S_t,M^{\infty,q}_{1\otimes v_s}),\quad 0<q\leq1,$$ where, for every fixed $t\in\bR$, the symplectic matrix $S_t$ describes the classical Hamiltonian flow related to $H$. This result extends a similar one obtained in \cite{CGRPartII2022}, where the perturbation $\sigma(x,D)$ has symbol $\sigma$ in  the  H\"ormander class
$ S^0_{0,0}(\rdd)\subset M^{\infty,q}_{1\otimes v_s}(\rdd)$. Our  conjecture is that other propagators, solutions to dispersive equations, should fall in these classes.\par
The organization of this paper is as follows. We start with a preliminary
section devoted to the definition and basic properties of symplectic matrices, metaplectic operators and $\cA$-Wigner distributions; we recall modulation spaces and present the continuity properties of metaplectic operators and $\cA$-Wigner distributions  on them. We will recall the algebra and Wiener properties for pseudodifferential operators. Section $3$ is devoted to the study of the Wigner kernel, in particular we shall prove \eqref{th23}. Section $4$ studies the properties of  $FIO(S,M^{\infty,q}_{1\otimes v_s})$ and proves Theorem \ref{tc1old}. The last section shows that the FIOs of Type I and II introduced above fall in the class $FIO(S,M^{\infty,q}_{1\otimes v_s})$. This inclusion will be the key tool in proving that  Schr\"{o}dinger propagators are members of these classes:  
$e^{itH}\in  FIO(S_t,M^{\infty,q}_{1\otimes v_s}),$ for every $t\in\bR$.

\section{Preliminaries}\label{sec:2}
	We denote by $x\xi=x\cdot \xi$, $x,\xi\in\rd$, the standard inner product on $\rd$. The space $\cS(\rd)$ is the Schwartz space of rapidly decreasing functions and $\cS'(\rd)$ is its topological dual, the space of tempered distributions. $\la f,g\ra=\int_{\rd}\overline{f(x)}g(x)dx$ denotes the sesquilinear inner product of the functions $f,g\in L^2(\rd)$. If $f\in\cS'(\rd)$ and $g\in\cS(\rd)$, the same notation stands for the unique extension of the $L^2$ inner product to a duality pairing (antilinear in the second component) on $\cS'\times\cS$. $B(L^2(\rd))$ is the space of bounded linear operators on $L^2(\rd)$, the norm of $T\in B(L^2(\rd))$ is denoted by $\norm{T}_{op}$. $M(d,\bR)$ is the group of real $d\times d$ matrices.  $\mbox{Sym}(d,\bR)$ denotes the group of symmetric $d\times d$ matrices, i.e., $C\in\mbox{Sym}(d,\bR)$ if $C^T=C$, whereas $\mbox{GL}(d,\bR)$ denotes the group of $d\times d$ invertible matrices. For $f,g\in L^2(\rd)$, $f\otimes g(x,y)=f(x)g(y)$. This definition extends to $f,g\in\cS'(\rd)$, where $f\otimes g\in\cS'(\rdd)$ is the unique distribution satisfying $\la f\otimes g,\f\otimes\psi\ra=\la f,\f\ra\la g,\psi\ra$.

	\subsection{Symplectic group}
	In this paper, we will make considerable use of the properties of the symplectic group. Let
		\begin{equation}\label{defJ}
			J=\begin{pmatrix}
			0_{d\times d} & I_{d\times d}\\
			-I_{d\times d} & 0_{d\times d}
			\end{pmatrix},
		\end{equation}
		where $0_{d\times d}$ is the $d\times d$ matrix having all zero entries and $I_{d\times d}$ is the $d\times d$ identity matrix. A matrix $S\in\bR^{2d\times 2d}$ is \emph{symplectic} if $S^TJS=J$. We denote the group of $2d\times2d$ symplectic matrices by $\mbox{Sp}(d,\bR)$. 
		
		Equivalently, writing:
		\begin{equation}\label{decompS}
			S=\begin{pmatrix}
				A & B\\ C & D
			\end{pmatrix}, \qquad A,B,C,D\in\bR^{d\times d},
		\end{equation}
		$S\in \mbox{Sp}(d,\bR)$ if and only if:
		\begin{equation}\label{symprel}
			\begin{cases}
				A^TC=C^TA,\\
				B^TD=D^TB,\\
				A^TD-C^TB=I_{d\times d}.
			\end{cases}
		\end{equation}
		We say that $S$ is \emph{upper block triangular} if $C=0_{d\times d}$, \emph{lower block triangular} if $B=0_{d\times d}$ and \emph{block diagonal} if $B=C=0_{d\times d}$.
		
		The matrices:
		\begin{equation}\label{VCDL}
			V_C = \begin{pmatrix}
				I_{d\times d} & 0_{d\times d}\\
				C & I_{d\times d}
			\end{pmatrix}, \quad and \quad \mathcal{D}_L=\begin{pmatrix} L^{-1} & 0_{d\times d} \\ 0_{d\times d} & L^T \end{pmatrix},
		\end{equation}
		for $C\in\mbox{Sym}(d,\bR)$ and $L\in \mbox{GL}(d,\bR)$, are symplectic. 
		
		\begin{proposition}\label{genSp} 
		$\mbox{Sp}(d,\bR)$ is generated by:
		\[
			\{V_C : C\in\mbox{Sym}(d,\bR)\}\cup\{\mathcal{D}_L : L\in\mbox{GL}(d,\bR)\}\cup\{ J\}.
		\]
		\end{proposition}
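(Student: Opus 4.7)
The plan is to reduce an arbitrary $S\in\mbox{Sp}(d,\bR)$ to a product of the listed generators via a block LDU-type factorization. Let $H$ denote the subgroup of $\mbox{Sp}(d,\bR)$ generated by $\{V_C:C\in\mbox{Sym}(d,\bR)\}\cup\{\mathcal{D}_L:L\in\mbox{GL}(d,\bR)\}\cup\{J\}$. First, since $J^2=-I$, we have $J^{-1}=-J=J^3\in H$, so conjugation by $J$ is available in $H$. A direct computation then yields $JV_{-C}J^{-1}=\begin{pmatrix} I & C \\ 0 & I\end{pmatrix}$, which I denote $\tilde V_C$; thus every upper block shear $\tilde V_C$ with $C\in\mbox{Sym}(d,\bR)$ also lies in $H$.

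Next, assume $S=\begin{pmatrix} A & B \\ C & D\end{pmatrix}\in\mbox{Sp}(d,\bR)$ has invertible upper-left block $A$. I would verify the explicit factorization
\begin{equation*}
	S = V_{CA^{-1}}\,\mathcal{D}_{A^{-1}}\,\tilde V_{A^{-1}B}.
\end{equation*}
The symplectic identities \eqref{symprel}, together with their consequence $AB^T=BA^T$ (obtained from $SJS^T=J$, which is equivalent to $S^TJS=J$ because $S$ is invertible), ensure that $CA^{-1}$ and $A^{-1}B$ are symmetric, so the two shear factors are well-defined. Multiplying out the right-hand side and invoking $D=A^{-T}+CA^{-1}B$ (which follows from $A^TD-C^TB=I$ and the symmetry of $CA^{-1}$) confirms the identity, proving $S\in H$ whenever its upper-left block is invertible.

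For a general $S$, I would reduce to the previous case by right-multiplying $S$ by a suitable $V_{C_0}$, which changes the upper-left block from $A$ to $A+BC_0$. Since $S$ is invertible, $\mbox{rank}(A\mid B)=d$, and from this one shows the existence of $C_0\in\mbox{Sym}(d,\bR)$ making $A+BC_0$ invertible: if $B$ is invertible, then $C_0=tI$ works for all sufficiently large $t$ because the leading coefficient of the polynomial $\det(A+tB)$ is $\det B\neq 0$; if $B$ is singular, a polynomial non-vanishing argument on $\mbox{Sym}(d,\bR)$ supplies the required $C_0$. Once such $C_0$ is secured, the previous step yields $SV_{C_0}\in H$, and hence $S=(SV_{C_0})V_{-C_0}\in H$. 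The main obstacle is this last step in the subcase where both $A$ and $B$ are singular: here the ``large-$t$'' trick is unavailable, and one has to exploit the full row rank of $(A\mid B)$ together with the symmetry constraint on $C_0$ via a delicate polynomial-vanishing argument to guarantee that $\det(A+BC_0)$ is not identically zero on $\mbox{Sym}(d,\bR)$.
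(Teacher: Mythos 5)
The paper states this proposition without proof (it is a classical fact, cf.\ Folland's \emph{Harmonic Analysis in Phase Space}), so there is no in-text argument to compare against; your outline is in any case the standard one. The ingredients you give are correct: $\tilde V_C=JV_{-C}J^{-1}$, and when $\det A\neq 0$ the factorization $S=V_{CA^{-1}}\,\mathcal D_{A^{-1}}\,\tilde V_{A^{-1}B}$ holds, with $CA^{-1}$ symmetric from $A^TC=C^TA$, $A^{-1}B$ symmetric from $AB^T=BA^T$, and the lower-right block identity following from $A^TD-C^TB=I$. The reduction of the general case by right-multiplication with $V_{C_0}$, which replaces $A$ by $A+BC_0$ while leaving $B$ and the product $AB^T+BC_0B^T$ (still symmetric) intact, is the right move.

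However, you explicitly leave open the key lemma making that reduction work: for $S=\begin{pmatrix}A&B\\C&D\end{pmatrix}\in\mbox{Sp}(d,\bR)$ with $A$ singular, there is a $C_0\in\mbox{Sym}(d,\bR)$ with $A+BC_0$ invertible. As written this is a genuine gap, since without it the proposition is only proved on the open dense set $\{\det A\neq 0\}$. The lemma is nevertheless an elementary normal-form exercise rather than a ``delicate polynomial-vanishing argument.'' Its two inputs are $\mbox{rank}(A\mid B)=d$ and $AB^T=BA^T$. Replace $(A,B)$ by $(PAQ,PBQ)$ for a suitable $P\in\mbox{GL}(d,\bR)$ and $Q$ orthogonal (these operations preserve both hypotheses, the class $\mbox{Sym}(d,\bR)$ of admissible $C_0$, and the conclusion), so that $B=\left(\begin{smallmatrix}I_r&0\\0&0\end{smallmatrix}\right)$ with $r=\mbox{rank}\,B$. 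In the corresponding block decomposition of $A$, the symmetry of $AB^T$ forces $A_{21}=0$ and $A_{11}=A_{11}^T$, while $\mbox{rank}(A\mid B)=d$ forces $A_{22}$ to be invertible. Then
\[
A+BC_0=\begin{pmatrix}A_{11}+C_{11}&A_{12}+C_{12}\\0&A_{22}\end{pmatrix}
\]
is invertible as soon as $A_{11}+C_{11}$ is, and the symmetric choice $C_{11}=I_r-A_{11}$ (with $C_{12},C_{22}$ arbitrary) does it. Supplying this lemma closes the proof.
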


	\subsection{Metaplectic operators}
		For $x,\xi\in\rd$, the \emph{translation operator} $T_x$ and the \emph{modulation operator} $M_\xi$ are defined as:
		\begin{equation}
		 T_xg(t)=g(t-x), \quad and \quad M_\xi g(t)=e^{2\pi i\xi t}g(t),
		 \end{equation}
		$g\in L^2(\rd)$. They extend to $g\in\cS'(\rd)$ by duality:
		\[
		\la T_x g,\varphi\ra = \la g,T_{-x}\varphi\ra, \qquad \la M_\xi g,\f\ra = \la g,M_{-\xi}\f\ra, \qquad \f\in\cS(\rd).	
		\]
		The composition of these operators $\pi(x,\xi)=M_\xi T_x$ is called \emph{time-frequency shift}. 
		
		For a given matrix $S\in\mbox{Sp}(d,\bR)$, there exist $\hat S\in B(L^2(\rd))$ unitary and $c_S\in\bC$, $|c_S|=1$, such that
		\begin{equation}\label{defMp}
			\hat S^{-1}\pi(x,\xi)\hat S=c_S\pi(S(x,\xi)), \qquad x,\xi\in\rd.
		\end{equation}
		The operator $\hat S$ is not unique, but if $\hat S'$ is another operator satisfying \eqref{defMp}, then $\hat S'=c\hat S$, for some $c\in\bC$ such that $|c|=1$. The set $\{\hat S:S\in\mbox{S}(d,\bR)\}$ is a group under composition, and it admits a subgroup containing exactly two operators for each $S\in \mbox{Sp}(d,\bR)$, differing by a sign. This subgroup is denoted by $\mbox{Mp}(d,\bR)$, the \emph{metaplectic group}. 
		
		The projection $\pi^{Mp}:\mbox{Mp}(d,\bR)\to\mbox{Sp}(d,\bR)$, defined by $\pi^{Mp}(\hat S)=S$ is a group homomorphism with $\ker(\pi^{Mp})=\{\pm Id_{L^2}\}$. Throughout this work, if $\hat S\in \mbox{Mp}(d,\bR)$, $S$ (without caret) denotes its (unique) symplectic projection. If $\hat S,\hat S_1,\hat S_2\in \mbox{Mp}(d,\bR)$, the following identities hold up-to-a-sign:
		\begin{equation}\label{homomorph}
			\hat S_1\circ \hat S_2=\widehat{S_1S_2}, \quad and \quad \hat S^{-1}=\widehat{S^{-1}}.
		\end{equation}
		Metaplectic operators enjoy the following continuity properties.
		\begin{proposition}
		Let $\hat S\in\mbox{Mp}(d,\bR)$.\\
			(i) $\hat S:L^2(\rd)\to L^2(\rd)$ is unitary.\\
			(ii) The restriction of $\hat S$ to $\cS(\rd)$ is a homeomorphism of $\cS(\rd)$.\\
			(iii) $\hat S$ extends to a homeomorphism of $\cS'(\rd)$ as:
			\[
				\la \hat Sf,g\ra = \la f,\hat S^{-1}g\ra, \qquad f\in\cS'(\rd), \ g\in\cS(\rd).
			\]
		\end{proposition}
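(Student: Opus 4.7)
The plan is to reduce the proposition to the three generators of $\mbox{Sp}(d,\bR)$ listed in Proposition \ref{genSp} and then extend by duality. Point (i) is essentially built into the definition of $\mbox{Mp}(d,\bR)$: by construction each $\hat S$ sits in $B(L^2(\rd))$ as a unitary operator, so nothing needs to be proved there beyond recalling the definition around \eqref{defMp}.

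For (ii) I would first identify, for each generator $S\in\{V_C,\mathcal{D}_L,J\}$, a concrete metaplectic representative and check that it is a homeomorphism of $\cS(\rd)$. Concretely, one checks via \eqref{defMp} that the following are admissible choices (up to sign):
\begin{equation*}
\widehat{V_C}f(x)=e^{i\pi Cx\cdot x}f(x),\qquad \widehat{\mathcal{D}_L}f(x)=|\det L|^{1/2}f(Lx),\qquad \hat Jf=\cF f.
\end{equation*}
Multiplication by the smooth polynomially-bounded function $e^{i\pi Cx\cdot x}$, linear change of variable by $L\in\mathrm{GL}(d,\bR)$, and the Fourier transform are all well-known homeomorphisms of $\cS(\rd)$, with inverses of the same form (so continuity of the inverse comes for free). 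Since by Proposition \ref{genSp} every $S\in\mbox{Sp}(d,\bR)$ is a finite product of these generators, and by \eqref{homomorph} the projection $\pi^{Mp}$ is a homomorphism with kernel $\{\pm \Id_{L^2}\}$, every $\hat S\in\mbox{Mp}(d,\bR)$ agrees up to sign with the corresponding finite composition of the three basic operators. Composition of homeomorphisms of $\cS(\rd)$ is a homeomorphism of $\cS(\rd)$, which yields (ii).

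For (iii), with (ii) in hand the extension is the standard duality construction. Given $f\in\cS'(\rd)$, define $\hat Sf$ by
\begin{equation*}
\la \hat Sf,g\ra := \la f,\hat S^{-1}g\ra,\qquad g\in\cS(\rd).
\end{equation*}
By (ii) applied to $\hat S^{-1}$, the map $g\mapsto \hat S^{-1}g$ is continuous on $\cS(\rd)$, so the right-hand side is a continuous antilinear functional on $\cS(\rd)$, i.e. an element of $\cS'(\rd)$. Consistency with the $L^2$ definition follows from unitarity: for $f,g\in L^2(\rd)$ one has $\la \hat Sf,g\ra_{L^2}=\la f,\hat S^*g\ra_{L^2}=\la f,\hat S^{-1}g\ra_{L^2}$. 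Continuity of $\hat S:\cS'(\rd)\to\cS'(\rd)$ in the weak-$\ast$ topology is immediate from the definition, since convergence $f_n\to f$ in $\cS'$ means $\la f_n,\f\ra\to\la f,\f\ra$ for every $\f\in\cS(\rd)$, and one applies this to $\f=\hat S^{-1}g$. Applying the same construction to $\hat S^{-1}$ produces a two-sided inverse, so $\hat S$ is a homeomorphism of $\cS'(\rd)$.

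The only place where something subtle could happen is verifying that the three concrete operators above are genuine metaplectic representatives, i.e.\ that they satisfy the intertwining identity \eqref{defMp} for the corresponding symplectic matrices $V_C$, $\mathcal{D}_L$ and $J$; this is a routine calculation with $\pi(x,\xi)=M_\xi T_x$ but is the one step where one must be careful about constants and signs. Once that is in place, the rest of the argument is essentially bookkeeping, and the main work of the proposition is packaged into the generation result of Proposition \ref{genSp}.
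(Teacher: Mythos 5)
The paper states this proposition without proof, treating it as a standard fact about the metaplectic representation (cf.\ \cite{folland89}), so there is no in-paper argument to compare against. Your proof is the canonical one: reduce to the generators $V_C$, $\mathcal{D}_L$, $J$ of $\mbox{Sp}(d,\bR)$ via Proposition \ref{genSp}, observe that their metaplectic representatives (chirp multiplication, rescaling, Fourier transform) are individually homeomorphisms of $\cS(\rd)$ with inverses of the same type, compose (using that $\pi^{Mp}$ is a homomorphism with kernel $\{\pm\Id\}$ to conclude $\hat S$ agrees with the composite up to sign), and then extend to $\cS'(\rd)$ by duality, invoking unitarity to check consistency of the dual-pairing definition with the $L^2$ inner product. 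This is correct; the one step you flag but do not carry out --- verifying that the three concrete operators satisfy the intertwining relation \eqref{defMp} --- is already recorded in Example \ref{ex23}, so there is no gap.
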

		\begin{example}\label{ex23}
		Recall the definitions of the matrices $J$, given in \eqref{defJ}, $\mathcal{D}_L$ and $V_C$, given in \eqref{VCDL}. \\
		(i) The \emph{Fourier transform} $\cF$ is a metaplectic operator. Recall that the Fourier transform of $f\in \cS(\rd)$ is defined as:
		\[
			\cF f(\xi)=\hat f(\xi)=\int_{\rd}f(x)e^{-2\pi i\xi x}dx, \qquad \xi\in\rd.
		\]
		We have: $\pi^{Mp}(\cF)=J$.\\
		(ii) \emph{Unitary linear changes of variables} are metaplectic operators. Namely, if $L\in \mbox{GL}(d,\bR)$, the operator 
		\begin{equation}\label{LchVar}
			\mathfrak{T}_Lf(t)=|\det L|^{1/2}f(Lt), \qquad f\in L^2(\rd),
		\end{equation}
		is a metaplectic operator. We have: $\pi^{Mp}(\mathfrak{T}_L)=\mathcal{D}_L$. 
		In particular, we set:
		\begin{equation}\label{TW}
			\mathfrak{T}_{w}=\mathfrak{T}_{L_{1/2}},
		\end{equation}
		where
		\[
			L_{1/2}=\begin{pmatrix}I_{d\times d} & I_{d\times d}/2\\
			I_{d\times d} & -I_{d\times d}/2\end{pmatrix}.
		\]
		(iii) For $C\in\mbox{Sym}(d,\bR)$, we consider the \emph{chirp function}: $\Phi_C(t)=e^{i\pi Ct\cdot t}$. \\
		(iiia) The operator:
		\[
			\phi_C f(t)=\Phi_C(t)f(t), \qquad f\in L^2(\rd)
		\]
		is metaplectic, with $\pi^{Mp}(\phi_C)=V_C$. \\
		(iiib) The Fourier multiplier:
		\[
			\psi_Cf(t)=\widehat{\Phi_{-C}}\ast f(t)=(\cF^{-1} \Phi_{-C} \cF f)(t), \qquad f\in L^2(\rd),
		\]
		is metaplectic, with $\pi^{Mp}(\psi_C)=V_C^T$.
		\end{example}
		\begin{example}
		We will also consider the \emph{partial Fourier transform} with respect to the frequency variables:
		\begin{equation}\label{FT2}
			\cF_2F(x,\xi)=\int_{\rd}F(x,t)e^{-2\pi i\xi t}dt, \qquad x,\xi\in\rd,
		\end{equation}		
		which is a metaplectic operator in $\mbox{Mp}(2d,\bR)$, with projection \cite{CGRPartII2022}:
		\begin{equation}\label{AFT2}
			\pi^{Mp}(\cF_2)=\cA_{FT2}=\begin{pmatrix}
				I_{d\times d} & 0_{d\times d} & 0_{d\times d} & 0_{d\times d}\\
				0_{d\times d} & 0_{d\times d} & 0_{d\times d} & I_{d\times d}\\
				0_{d\times d} & 0_{d\times d} & I_{d\times d} & 0_{d\times d}\\
				0_{d\times d} & -I_{d\times d} & 0_{d\times d} & 0_{d\times d}
			\end{pmatrix}.
		\end{equation}
		\end{example}
		
		The following lifting property is proved in \cite[Theorem B.1]{CG2023}.
		\begin{theorem}\label{lift}
			Let $\hat S_1,\hat S_2\in \mbox{Mp}(d,\bR)$. There exists a unique $\hat S\in \mbox{Mp}(2d,\bR)$ characterized by $\hat S(f\otimes g)=\hat S_1f\otimes\hat S_2g$. If
			\[
				S_1=\begin{pmatrix}
				A_1 & B_1\\
				C_1 & D_1
				\end{pmatrix} \qquad and \qquad 
				S_2=\begin{pmatrix}
					A_2 & B_2\\
					C_2 & D_2
				\end{pmatrix},
			\]
			then
			\[
				S=\begin{pmatrix}
				A_1 & 0_{d\times d} & B_1 & 0_{d\times d}\\
				 0_{d\times d} & A_2 &  0_{d\times d} & B_2\\
				C_1 & 0_{d\times d} & D_1 &  0_{d\times d} \\
				 0_{d\times d} & C_2 &  0_{d\times d} & D_2
				\end{pmatrix}.
			\]
		\end{theorem}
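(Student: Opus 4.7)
The plan is to construct $\hat S$ explicitly on the dense subspace of simple tensors, extend it to a unitary on $L^2(\rdd)$, and then verify it lies in $\mbox{Mp}(2d,\bR)$ with the asserted block projection. First I would define $\hat S(f\otimes g):=\hat S_1 f\otimes \hat S_2 g$ on finite linear combinations of simple tensors in $L^2(\rdd)$. Unitarity of $\hat S_1,\hat S_2$ on $L^2(\rd)$ gives
\[
\la \hat S_1 f_1\otimes \hat S_2 g_1,\hat S_1 f_2\otimes \hat S_2 g_2\ra=\la f_1,f_2\ra\la g_1,g_2\ra=\la f_1\otimes g_1,f_2\otimes g_2\ra,
\]
so $\hat S$ is isometric on this subspace and surjective onto simple tensors (each $\hat S_i$ being a bijection of $L^2(\rd)$). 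Density of simple tensors in $L^2(\rdd)$ then produces a unique unitary extension on $L^2(\rdd)$.

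The next step is to verify that $\hat S$ is metaplectic with the stated projection. The key ingredient is the factorization
\[
\pi(x_1,x_2,\xi_1,\xi_2)(f\otimes g)=\pi(x_1,\xi_1)f\otimes \pi(x_2,\xi_2)g, \qquad (x_1,x_2,\xi_1,\xi_2)\in\bR^{4d},
\]
combined with \eqref{defMp} applied separately to $\hat S_1$ and $\hat S_2$. This yields
\[
\hat S^{-1}\pi(x_1,x_2,\xi_1,\xi_2)\hat S(f\otimes g)=c_{S_1}c_{S_2}\,\pi(S_1(x_1,\xi_1),S_2(x_2,\xi_2))(f\otimes g),
\]
where the $4d$-vector on the right, read in the ordering $(x_1,x_2,\xi_1,\xi_2)$, has translation components $(A_1 x_1+B_1\xi_1,A_2 x_2+B_2\xi_2)$ and frequency components $(C_1 x_1+D_1\xi_1,C_2 x_2+D_2\xi_2)$. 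Extending by density and continuity propagates the intertwining relation \eqref{defMp} to all of $L^2(\rdd)$, with candidate $\hat S$-projection given by the $4d\times 4d$ block matrix displayed in the statement and phase factor $c_{S_1}c_{S_2}$.

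It then remains to confirm that this candidate matrix $S$ is symplectic: a direct block computation reduces $S^TJS=J$ on $\bR^{4d}$ to the three symplectic relations \eqref{symprel} applied independently to $(A_i,B_i,C_i,D_i)$, $i=1,2$, so indeed $S\in\mbox{Sp}(2d,\bR)$ and hence $\hat S\in\mbox{Mp}(2d,\bR)$ with $\pi^{Mp}(\hat S)=S$. Uniqueness is immediate, since any $\hat S'\in\mbox{Mp}(2d,\bR)$ satisfying the tensor property agrees with $\hat S$ on the dense span of simple tensors and therefore on all of $L^2(\rdd)$. The main obstacle I anticipate is the bookkeeping in the symplecticity check: in the convention $z=(x_1,x_2,\xi_1,\xi_2)$, the $4d\times 4d$ symplectic form $J$ has blocks placed so that the three relations in \eqref{symprel} for $S_1$ and $S_2$ reassemble correctly into those for $S$. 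This is routine but requires careful tracking of the block positions, since the space variables $x_1,x_2$ occupy the first two $d$-blocks and the frequency variables $\xi_1,\xi_2$ the last two.
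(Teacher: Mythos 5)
Your construction of $\hat S$ as a unitary on $L^2(\rdd)$ from the tensor property, the factorization $\pi(x_1,x_2,\xi_1,\xi_2)(f\otimes g)=\pi(x_1,\xi_1)f\otimes\pi(x_2,\xi_2)g$, and the verification that the candidate $S$ is symplectic are all correct. (The paper itself does not prove this statement; it cites it as Theorem B.1 of \cite{CG2023}, so there is no internal proof to compare against directly.)

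There is, however, a gap in the final step. What you have shown is that the constructed unitary $\hat S$ satisfies the intertwining relation \eqref{defMp} with the displayed symplectic matrix $S$. By the discussion after \eqref{defMp}, this only places $\hat S$ in the full group $\{\hat A : A\in\mbox{Sp}(2d,\bR)\}$, which is a $U(1)$-extension of $\mbox{Sp}(2d,\bR)$ --- any unitary satisfying \eqref{defMp} is determined only up to a unimodular constant. The metaplectic group $\mbox{Mp}(2d,\bR)$ is the distinguished $\bZ/2$-subgroup of this extension (exactly two representatives per $S$, differing by sign), so the inference ``$S\in\mbox{Sp}(2d,\bR)$ and hence $\hat S\in\mbox{Mp}(2d,\bR)$'' does not follow: you still need to rule out that $\hat S$ differs from a metaplectic operator by a nontrivial phase $c\neq\pm1$.

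To close the gap, the standard route is via generators: by Proposition \ref{genSp}, every $\hat S_i\in\mbox{Mp}(d,\bR)$ is, up to a sign, a finite product of the operators $\cF$, $\phi_C$, $\mathfrak{T}_L$ of Example \ref{ex23}. Since the assignment $(\hat S_1,\hat S_2)\mapsto\hat S$ is multiplicative in each argument and commutes with sign changes, it suffices to check membership in $\mbox{Mp}(2d,\bR)$ when $\hat S_1$ (or $\hat S_2$) is one of these generators and the other factor is the identity; e.g. $(\phi_{C_1}f)\otimes g=\phi_{\mathrm{diag}(C_1,0)}(f\otimes g)$, $(\mathfrak{T}_{L_1}f)\otimes g=\mathfrak{T}_{\mathrm{diag}(L_1,I)}(f\otimes g)$, and $(\cF f)\otimes g$ is the partial Fourier transform $\cF_1=\cF_{\rdd}\,\cF_2^{-1}$, a composition of metaplectic operators. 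With this verification the argument is complete; without it, the conclusion $\hat S\in\mbox{Mp}(2d,\bR)$ is unjustified.
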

		
		Moreover, we mention the following intertwining relation, which is a particular case of \cite[Proposition A.1]{CRGFIO1}.
		
		\begin{proposition}\label{intert}
			Let $M\in\mbox{GL}(2d,\bR)$ and $\hat U:=\cF_2^{-1}\mathfrak{T}_M\cF_2$. Then, $U=\pi^{Mp}(\hat U)$ is upper block triangular if and only if $M$ is upper block triangular. Moreover, $\hat U=\mathfrak{T}_N$ for some $N\in \mbox{GL}(2d,\bR)$ if and only if $M$ is block diagonal.
		\end{proposition}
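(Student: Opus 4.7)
The plan is to reduce the claim to a direct computation at the symplectic level. By the group homomorphism property \eqref{homomorph}, combined with Example \ref{ex23}(ii) which gives $\pi^{Mp}(\mathfrak{T}_M) = \mathcal{D}_M$ and formula \eqref{AFT2} which gives $\pi^{Mp}(\cF_2) = \cA_{FT2}$, we immediately obtain
\[
U = \pi^{Mp}(\hat U) = \cA_{FT2}^{-1}\,\mathcal{D}_M\,\cA_{FT2}.
\]
Both equivalences in the proposition therefore amount to reading off block-triangularity conditions from this conjugation.

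First I would decompose $M$ into four $d \times d$ blocks $M = \bigl(\begin{smallmatrix} P & Q \\ R & S \end{smallmatrix}\bigr)$, which yields corresponding decompositions of $M^T$ and $M^{-1}$, and hence of the $4d \times 4d$ matrix $\mathcal{D}_M = \mathrm{diag}(M^{-1}, M^T)$ into sixteen $d \times d$ sub-blocks. Next, using the explicit form of $\cA_{FT2}$ in \eqref{AFT2}, together with the easily verified expression
\[
\cA_{FT2}^{-1} = \begin{pmatrix} I_{d\times d} & 0_{d\times d} & 0_{d\times d} & 0_{d\times d}\\ 0_{d\times d} & 0_{d\times d} & 0_{d\times d} & -I_{d\times d}\\ 0_{d\times d} & 0_{d\times d} & I_{d\times d} & 0_{d\times d}\\ 0_{d\times d} & I_{d\times d} & 0_{d\times d} & 0_{d\times d}\end{pmatrix},
\]
I would compute the triple product and then regroup the sixteen $d \times d$ sub-blocks into a $2\times 2$ array of $2d \times 2d$ blocks $U = \bigl(\begin{smallmatrix} A & B \\ C & D \end{smallmatrix}\bigr)$ compatible with the symplectic decomposition \eqref{decompS}.

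The key step is to identify the lower-left $2d \times 2d$ block $C$ of $U$: the explicit computation should show that $C$ is built out of the off-diagonal block $R$ of $M$ (and blocks of $M^{-1}$ likewise determined by $R$), so that $C = 0_{2d \times 2d}$ if and only if $R = 0_{d \times d}$, which is precisely the condition that $M$ is upper block triangular. This proves the first equivalence. For the second statement, since $\mathfrak{T}_N$ is, up to sign, the unique metaplectic lift of $\mathcal{D}_N$, the condition $\hat U = \mathfrak{T}_N$ for some $N \in \mbox{GL}(2d,\bR)$ is equivalent to $U$ being of the form $\mathcal{D}_N$, i.e., block diagonal with $B = C = 0$. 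From the same block computation, this corresponds to both $R = 0$ and $Q = 0$, which is exactly block-diagonality of $M$.

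The main obstacle is purely bookkeeping: one has to track carefully the positions and signs of the four blocks of $M^{-1}$ and $M^T$ as they are shuffled by $\cA_{FT2}$ and $\cA_{FT2}^{-1}$. No deeper idea is required, and once the product is written out the two equivalences follow by inspection. As a sanity check, one can verify a posteriori that the resulting $U$ indeed lies in $\mbox{Sp}(2d,\bR)$ (using the symplectic relations \eqref{symprel}), which is automatic since conjugation by $\cA_{FT2}$ preserves the symplectic group.
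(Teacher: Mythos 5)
Your approach is sound and natural; note that the paper itself does not supply a proof of this proposition but merely cites Proposition A.1 of a companion reference, so there is no internal argument to compare against. The reduction of the operator-level claim to the symplectic conjugation $U = \cA_{FT2}^{-1}\mathcal{D}_M\cA_{FT2}$ via \eqref{homomorph} and Example \ref{ex23}(ii) is correct, your formula for $\cA_{FT2}^{-1}$ is right, and carrying out the triple product with $M$ and $M^{-1}$ partitioned into $d\times d$ blocks shows that the lower-left $2d\times 2d$ block of $U$ is assembled from $R$ (the lower-left block of $M$) together with the lower-left block of $M^{-1}$; these vanish simultaneously (since $M$ is upper block triangular iff $M^{-1}$ is), so $U$ is upper block triangular iff $M$ is. The analogous observation for the upper-right block of $U$ handles the block-diagonal case at the level of projections.

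There is, however, a small gap in your final step. From $M$ block diagonal you conclude $U = \mathcal{D}_N$, but this only yields $\hat U = \pm\mathfrak{T}_N$: the metaplectic lift of $\mathcal{D}_N$ is determined only up to sign, and $-\mathfrak{T}_N$ is \emph{not} equal to $\mathfrak{T}_{N'}$ for any $N' \in \mbox{GL}(2d,\bR)$ (the latter preserves positivity of functions, the former reverses it). So you still need to verify that the sign is $+$. This is quick: for $M = \begin{pmatrix} P & 0 \\ 0 & S \end{pmatrix}$ with $P,S\in\mbox{GL}(d,\bR)$, writing out $\cF_2$ and $\cF_2^{-1}$ explicitly gives
\[
\cF_2^{-1}\mathfrak{T}_M\cF_2 f(x,y) = |\det P|^{1/2}|\det S|^{-1/2}\, f(Px, S^{-T}y) = \mathfrak{T}_N f(x,y), \qquad N = \begin{pmatrix} P & 0 \\ 0 & S^{-T}\end{pmatrix},
\]
with the correct positive sign. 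With this supplement your argument is complete.
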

		
		\subsection{Metaplectic Wigner distributions}
		Metaplectic Wigner distributions are defined in \cite{CRPartI2022}, and later studied in \cite{CG2023,CGRPartII2022}, see also the recent contribution \cite{CharlyIrina2024}.
		\begin{definition}
			Let $\hat\cA\in \mbox{Mp}(2d,\bR)$. The (cross-)\emph{metaplectic Wigner distribution} associated to $\hat\cA$ is the time-frequency representation
			\begin{equation}\label{MWD}
				W_\cA(f,g)=\hat\cA(f\otimes\bar g), \qquad f,g\in\cS'(\rd).
			\end{equation}
		\end{definition}
		We write $W_\cA f=W_\cA(f,f)$. Metaplectic Wigner distributions satisfy the following continuity properties.
		\begin{proposition}\label{propcont}
			(i) $W_\cA:L^2(\rd)\times L^2(\rd)\to L^2(\rdd)$ is bounded, with
			\begin{equation}\label{moyal}
				\la W_\cA(f,g),W_\cA(u,v)\ra =\la f,u\ra \overline{\la g,v\ra}, \qquad f,g,u,v\in L^2(\rd)
			\end{equation}
			(Moyal's identity). In particular, $\norm{W_\cA f}_2=\norm{f}_2^2$.\\
			(ii) $W_\cA:\cS(\rd)\times\cS(\rd)\to\cS(\rdd)$ is continuous.\\
			(iii) $W_\cA:\cS'(\rd)\times\cS'(\rd)\to\cS'(\rdd)$ is continuous.
		\end{proposition}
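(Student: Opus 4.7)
The plan is to exploit the factorization
$$W_\cA(f,g)=\hat\cA\circ T(f,g),\qquad T(f,g):=f\otimes\bar g,$$
so that each of the three continuity statements reduces to pairing a known property of the tensor product with the corresponding continuity property of the metaplectic operator $\hat\cA$ listed in the previous proposition. Conjugation $g\mapsto\bar g$ is trivially continuous on $L^2(\rd)$, on $\cS(\rd)$ and on $\cS'(\rd)$, so it contributes nothing to the estimates and may be ignored along the way.

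For (i), I would first recall that $(f,g)\mapsto f\otimes g$ is bounded from $L^2(\rd)\times L^2(\rd)$ into $L^2(\rdd)$ with $\|f\otimes g\|_{L^2(\rdd)}=\|f\|_{2}\|g\|_{2}$; this is an instance of Fubini. Composing with the unitary $\hat\cA$ (item (i) of the preceding proposition) yields $\|W_\cA(f,g)\|_{2}=\|f\|_{2}\|g\|_{2}$, and in particular $\|W_\cA f\|_{2}=\|f\|_{2}^2$. Moyal's identity \eqref{moyal} follows in one line from unitarity:
$$\la W_\cA(f,g),W_\cA(u,v)\ra=\la \hat\cA(f\otimes\bar g),\hat\cA(u\otimes\bar v)\ra=\la f\otimes\bar g,u\otimes\bar v\ra=\la f,u\ra\overline{\la g,v\ra},$$
where the last equality uses the definition of the tensor product and the antilinearity of the inner product in the second slot.

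For (ii), I would invoke the standard fact that the tensor product $\cS(\rd)\times\cS(\rd)\to\cS(\rdd)$ is (jointly) continuous with respect to the Schwartz seminorms (this follows from $|\partial^\alpha(f\otimes g)(x,y)|\le |\partial^{\alpha_1}f(x)||\partial^{\alpha_2}g(y)|$ and the usual polynomial weights). Since the restriction of $\hat\cA$ to $\cS(\rdd)$ is a homeomorphism by item (ii) of the previous proposition, the composition $\hat\cA\circ T$ is continuous $\cS(\rd)\times\cS(\rd)\to\cS(\rdd)$. Statement (iii) is entirely analogous: the tensor product extends to a (separately, and in fact hypocontinuously) continuous bilinear map $\cS'(\rd)\times\cS'(\rd)\to\cS'(\rdd)$, characterized by $\la f\otimes g,\varphi\otimes\psi\ra=\la f,\varphi\ra\la g,\psi\ra$, and $\hat\cA$ extends to a homeomorphism of $\cS'(\rdd)$ by item (iii) of the previous proposition, so the composition is continuous.

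There is no genuine obstacle in this argument; the only point that merits care is the continuity of the tensor product on $\cS'$, which is not trivial if one insists on joint continuity, but the separate continuity needed here is immediate from the dual pairing characterization above combined with the continuity of $T:\cS\times\cS\to\cS(\rdd)$ established for (ii).
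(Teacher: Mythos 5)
Your proof is correct, and it follows the natural (essentially the only) route: use the defining factorization $W_\cA(f,g)=\hat\cA(f\otimes\bar g)$, invoke the tensor-product continuity on $L^2$, $\cS$, and $\cS'$ respectively, and compose with the unitarity/homeomorphism properties of $\hat\cA$ stated in the preceding proposition. The paper itself records this proposition without proof (citing the earlier works where metaplectic Wigner distributions are introduced), so there is no alternative argument in the source to compare against; what you wrote is exactly what those references do.

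One cosmetic remark: you say the conjugation in $\la f,u\ra\,\overline{\la g,v\ra}$ comes from ``antilinearity of the inner product in the second slot,'' but with the paper's convention $\la f,g\ra=\int\overline{f}\,g$ the inner product is antilinear in the \emph{first} slot. The overline on $\la g,v\ra$ actually arises from the conjugate bars sitting on $\bar g$ and $\bar v$ inside the tensor products: $\int g(y)\overline{v(y)}\,dy=\overline{\la g,v\ra}$. Your final identity is right; only the attribution of where the conjugate comes from is slightly mislabeled.
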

		We will consider a metaplectic operator $\hat\cA\in \mbox{Mp}(2d,\bR)$ and its projection, i.e. the $4d\times 4d$ symplectic matrix $\cA\in \mbox{Sp}(2d,\bR)$ with $d\times d$ block decomposition:
		\begin{equation}\label{blocksA}
			\cA = \begin{pmatrix}
				A_{11} & A_{12} & A_{13} & A_{14}\\
				A_{21} & A_{22} & A_{23} & A_{24}\\
				A_{31} & A_{32} & A_{33} & A_{34}\\
				A_{41}& A_{42} & A_{43} & A_{44}
			\end{pmatrix}.
		\end{equation}
		The submatrix
		\begin{equation}\label{defEA}
			E_\cA = \begin{pmatrix}
				A_{11} & A_{13}\\
				A_{21} & A_{23}
			\end{pmatrix}
		\end{equation}
		plays a special role, that will be clarified in Theorem \ref{thm26} below.
		\begin{definition}\label{shiftinv}
		Under the notation above, a metaplectic Wigner distribution $W_\cA$ (or equivalently, the projection $\cA\in\mbox{Sp}(2d,\bR)$) is \emph{shift-invertible} if $E_\cA\in \mbox{GL}(2d,\bR)$.
		\end{definition}
		
		\begin{example}\label{exW0}
		Examples of metaplectic Wigner distributions can be found in \cite{CG2023,CGRPartII2022}. In the present work, we limit to mention two important intances of these time-frequency representations: the cross-Wigner distribution, described in Subsection \ref{subsec:Wigner}, and the (cross-)Rihacek distribution, which is defined as
		\begin{equation}
			W_0(f,g)(x,\xi)=f(x)\overline{\hat g(\xi)}e^{-2\pi i\xi x}, \qquad f,g\in\cS(\rd).
		\end{equation}
		 It is easy to verify that $$W_0(f,g)=\cF_2\mathfrak{T}_{L_0}(f\otimes\bar g),$$ where $\mathfrak{T}_{L_0}F(x,\xi)=F(L_0(x,\xi))=F(x,x-\xi)$ and $\cF_2$ is defined in \eqref{FT2}. Also, using \eqref{homomorph},  $\cF_2\mathfrak{T}_{L_0}=\hat A_0$, where
		\begin{equation}\label{defA0}
			A_0=\begin{pmatrix}
				I_{d\times d} & 0_{d\times d} & 0_{d\times d} & 0_{d\times d}\\
				0_{d\times d} & 0_{d\times d} & 0_{d\times d} & -I_{d\times d}\\
				0_{d\times d} & 0_{d\times d} & I_{d\times d} & I_{d\times d}\\
				-I_{d\times d} & I_{d\times d} & 0_{d\times d} & 0_{d\times d}
			\end{pmatrix}.
		\end{equation}
		\end{example}
		
	\subsection{The Wigner distribution}\label{subsec:Wigner}
		The \emph{(cross-)Wigner distribution} of $f\in L^2(\rd)$ with respect to the window $g\in L^2(\rd)$ is defined as in \eqref{WigDist}. This definition can be extended to $f\in\cS'(\rd)$ and $g\in\cS(\rd)$ as follows: 
		\begin{equation}\label{defW2}
			W(f,g)(x,\xi)=\la f,\pi_w(x,\xi)g\ra, \qquad x,\xi\in\rd,
		\end{equation}
		where the \emph{metaplectic atoms} $\pi_w(x,\xi)$ are defined as:
		\begin{equation}
			\pi_w(x,\xi)g(t)=2^de^{4\pi i(t- x)\xi}g(2x-t), \qquad x,\xi\in\rd.
		\end{equation}
		Finally, the definition of the cross-Wigner distribution can be extended to $f,g\in\cS'(\rd)$ by setting:
		\begin{equation}\label{MWW}
			W(f,g)=\cF_2\mathfrak{T}_w(f\otimes\bar g),
		\end{equation}
		where $\cF_2$ is defined as in \eqref{FT2} and $\mathfrak{T}_w$ is defined as in \eqref{TW}. The \emph{Wigner distribution} of $f\in \cS'(\rd)$ is $Wf=W(f,f)$. 
		
		\begin{remark}
			Formula \eqref{MWW} highlights the fact that the Wigner distribution is the (shift-invertible) metaplectic Wigner distribution associated to $\hat A_{1/2}=\cF_2\mathfrak{T}_w$. The projection $A_{1/2}$ has block decomposition:
			\begin{equation}\label{defA12}
			A_{1/2} = \begin{pmatrix}
				I_{d\times d}/2 & I_{d\times d}/2 & 0_{d\times d} & 0_{d\times d}\\
				0_{d\times d} & 0_{d\times d} & I_{d\times d}/2 & -I_{d\times d}/2\\
				0_{d\times d} & 0_{d\times d} & I_{d\times d} & I_{d\times d}\\
				-I_{d\times d} & I_{d\times d} & 0_{d\times d} & 0_{d\times d}
			\end{pmatrix}.
			\end{equation}
		\end{remark}

		\subsection{Modulation spaces}
		A \emph{submultiplicative weight} on $\rdd$ is a function $v:\rdd\to [0,+\infty)$ such that $v(z+w)\leq v(z)v(w)$ for every $z,w\in\rdd$. If $v$ is a submultiplicative weight, a function $m:\rdd\to[0,+\infty)$ is a $v$-\emph{moderate} weight if $m(z+w)\lesssim m(z)v(w)$. We denote with $\mathcal{M}_v(\rdd)$ the set of $v$-moderate weights. 
		
		We will use the polynomial weight functions: $v_s(z)=(1+|z|^2)^{s/2}$, for $s\geq0$. These are submultiplicative weights satisfying:
		\[
			v_s(x,\xi)\leq v_s(x)v_s(\xi).
		\] 
		Let $0<p,q\leq\infty$, $m\in\mathcal{M}_v(\rdd)$ and $g\in\cS(\rd)\setminus\{0\}$. For $f\in\cS'(\rd)$, consider the quasi-norm:
			\[
				\norm{f}_{M^{p,q}_m}=\left(\int_{\rd}\left(\int_{\rd} |W(f,g)(x,\xi)|^pm(x,\xi)^pdx\right)^{q/p}d\xi\right)^{1/q},
			\]
			with obvious modifications if either $p=\infty$ or $q=\infty$. The definition of $\norm{\cdot}_{M^{p,q}_m}$ does not depend on the choice of $g$, i.e., different windows yield to equivalent quasi-norms. 
			
			The \emph{modulation space} $M^{p,q}_m(\rd)$ is the space of tempered distributions $f\in \cS'(\rd)$ such that $\norm{f}_{M^{p,q}_m}<\infty$. If $p=q$, $M^p_m(\rd):=M^{p,p}_m(\rd)$ and we write $M^{p,q}(\rd)$ if $m=1$. Modulation spaces enjoy the following inclusion relations: for $0<p_1\leq p_2\leq\infty$, $0<q_1\leq q_2\leq\infty$ and $m_1,m_2\in\mathcal{M}_v(\rdd)$ satisfy $m_2\lesssim m_1$,
			\[
				\cS(\rd)\hookrightarrow M^{p_1,q_1}_{m_1}(\rd)\hookrightarrow M^{p_2,q_2}_{m_2}(\rd)\hookrightarrow\cS'(\rd),
			\]
			the inclusions being continuous. If $p_1,p_2,q_1,q_2\neq\infty$, the inclusions are also dense. 
			
			When defining modulation spaces, the Wigner distribution can be replaced by any {shift-invertible} {metaplectic Wigner distribution}, as detailed in \cite{CG2023}. 
			
			\begin{theorem}\label{thm26}
				Let $W_\cA$ be a shift-invertible metaplectic Wigner distribution and $m\in\mathcal{M}_v(\rdd)$ satisfying $m\circ E_\cA\asymp E_\cA$, where $E_\cA$ is defined as in \eqref{defEA}. Let $g\in\cS(\rd)\setminus\{0\}$. \\
				(i) If $E_\cA$ is upper block triangular, for every $0<p,q\leq\infty$, 
				\begin{equation}\label{charMpq}
				\norm{f}_{M^{p,q}_m}\asymp\norm{W_\cA(f,g)}_{L^{p,q}_m}, \qquad f\in\cS'(\rd).
				\end{equation}
				(ii) If $0<p\leq\infty$, 
				\[
					\norm{f}_{M^{p}_m}\asymp\norm{W_\cA(f,g)}_{L^{p}_m}, \qquad f\in\cS'(\rd).
				\]
			\end{theorem}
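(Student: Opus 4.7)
The plan is to reduce the claim for a general shift-invertible $W_\cA$ to the definition of $M^{p,q}_m$, which is already formulated in terms of the standard Wigner distribution $W$. I would write
$$W_\cA(f,g)=\hat\cA(f\otimes\bar g)=\hat\cB\,W(f,g), \qquad \hat\cB:=\hat\cA\,\hat{A_{1/2}}^{-1}\in\mbox{Mp}(2d,\bR),$$
and aim to establish a pointwise identity of the form
$$|W_\cA(f,g)(z)|=c\,|W(f,g')(E_\cA z)|, \qquad z\in\rdd,$$
for some constant $c>0$ and some window $g'\in\cS(\rd)\setminus\{0\}$ depending on $g$ via a metaplectic operator.

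To prove this identity, I would factor $\hat\cB$ along the generators of $\mbox{Sp}(2d,\bR)$ supplied by Proposition \ref{genSp}. Multiplications by chirps $\phi_C$ and Fourier multipliers $\psi_C$ from Example \ref{ex23} act on $W(f,g)$ by multiplication by unimodular functions and therefore leave the modulus unchanged; the dilations $\mathfrak{T}_L$ act as linear changes of variables, producing ultimately the factor $E_\cA z$; the only delicate generators are the symplectic Fourier transforms. Shift-invertibility of $\cA$, i.e.\ invertibility of $E_\cA$, is the precise condition that allows to reroute every such Fourier component — via the lifting Theorem \ref{lift} and the intertwining Proposition \ref{intert} — into a metaplectic change of window $g\mapsto g'=\hat Sg$, rather than leaving it acting on the phase-space variable $z$. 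Putting these elementary contributions together yields the claimed identity.

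Inserting this identity into $\|W_\cA(f,g)\|_{L^{p,q}_m}$ and performing the substitution $w=E_\cA z$ reduces the matter to $\|W(f,g')\|_{L^{p,q}_m}$. The assumption $m\circ E_\cA\asymp m$ (which I read as $m(E_\cA z)\asymp m(z)$) transfers the weight, while the Jacobian $|\det E_\cA|$ produces only a harmless multiplicative constant. In case (i), the upper block triangular structure $E_\cA=\begin{pmatrix}P&Q\\0&R\end{pmatrix}$ with $P,R\in\mbox{GL}(d,\bR)$ is exactly what makes the substitution compatible with the mixed-norm order of integration: for each fixed $z_2$ the inner $L^p$-integral in $z_1$ transforms cleanly, and the outer $L^q$-integral in $z_2$ transforms cleanly afterwards. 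In case (ii) the mixed norm collapses to a plain $L^p_m$-norm, which is invariant under any invertible linear change of variables up to constants, so the triangularity hypothesis is superfluous. In either case, the window-invariance of the modulation space quasi-norm then gives $\|W(f,g')\|_{L^{p,q}_m}\asymp\|f\|_{M^{p,q}_m}$, closing the argument.

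The main obstacle is the factorization of $\hat\cB$ in the second paragraph: one must exploit the $4d\times 4d$ block structure \eqref{blocksA} of $\cA$ — and specifically the invertibility of the $2d\times 2d$ submatrix $E_\cA$ — to show that every appearance of a symplectic Fourier transform in the canonical decomposition of $\hat\cB$ can be absorbed into the window via Theorem \ref{lift}, rather than altering the phase-space coordinate. This is precisely where Definition \ref{shiftinv} of shift-invertibility earns its name and acquires its geometric meaning.
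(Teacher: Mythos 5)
Your overall strategy — reduce to a pointwise identity of the form $|W_\cA(f,g)(z)|\asymp|W(f,g')(E_\cA^{\pm1}z)|$ with $g'$ a metaplectic image of $g$, then change variables in $L^{p,q}_m$ — is the standard and correct route, and your explanation of why the triangularity of $E_\cA$ enters in part (i) but not (ii), together with the window-independence of the quasi-norm, is exactly right. The paper itself does not prove this theorem (it cites \cite{CG2023}), so the question is simply whether your argument for the pointwise identity works. It does not, as written.

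You assert that the "Fourier multipliers $\psi_C$ from Example \ref{ex23} act on $W(f,g)$ by multiplication by unimodular functions and therefore leave the modulus unchanged." This is false: by definition $\psi_C F=\widehat{\Phi_{-C}}\ast F=\cF^{-1}\Phi_{-C}\cF F$ is a \emph{convolution}, not a pointwise multiplication, and it does not preserve $|F(z)|$ at any point (test it on $F=\mathbf{1}_{[0,1]^{2d}}$). Only the genuine chirp multipliers $\phi_C$ have the modulus-preserving property you use. Since a factorization of $\hat\cB=\hat\cA\hat A_{1/2}^{-1}$ into the generators of Proposition \ref{genSp} will in general contain the Fourier factor $\cF$ and, after grouping, factors of the type $\psi_C$ alongside $\phi_C$'s and $\mathfrak{T}_L$'s, the factor-by-factor modulus bookkeeping breaks down at precisely the delicate step. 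The further claim that shift-invertibility lets one "reroute" each such factor into a window change via Theorem \ref{lift} and Proposition \ref{intert} is an appealing heuristic, but it is not a proof: neither result, as stated, converts an arbitrary Fourier factor of $\hat\cB$ into a tensor $\Id\otimes\hat S_2$, and you never actually \emph{use} the invertibility of the $2d\times2d$ block $E_\cA$ in a computation — you only invoke it. What the identity really requires is an explicit block decomposition $\cA=V_C\,\mathcal{D}_L\,A_{1/2}\,T$ with $C\in\mbox{Sym}(2d,\bR)$, $L\in\mbox{GL}(2d,\bR)$ determined by $E_\cA$, and $T$ the projection of a tensor-type metaplectic operator as in Theorem \ref{lift}; showing that such a decomposition exists exactly when $E_\cA$ is invertible is the heart of the matter and is absent from your argument. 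Until that is supplied, the central modulus identity — and with it the whole proof — remains unestablished.
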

			We recall the following characterization \cite{Elena-book}:
			\begin{lemma}\label{lemmaE0}
				If $m\in\mathcal{M}_v(\rdd)$, $1\leq p,q<\infty$ then $\cS(\rd)$ is dense in $M^{p,q}_m(\rd)$ and $M^{p,q}_m(\rd)^*=M^{p',q'}_{1/m}(\rd)$, with $1/p+1/p'=1$, $1/q+1/q'=1$. Moreover, if $1<p,q\leq\infty$,
				\begin{equation}\label{E0}
				\|f\|_{ M^{p,q}_m}=\sup |\la f,g\ra|
				\end{equation}
		where the supremum is taken over all $g\in \cS(\rd)$ such that $\|g\|_{ M^{p',q'}_{1/m}}=1$.
			\end{lemma}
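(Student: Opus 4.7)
I would reduce all three assertions, via the short-time Fourier transform, to standard facts on the mixed-norm Lebesgue space $L^{p,q}_m(\rdd)$. Fix a window $\phi\in\cS(\rd)$ with $\|\phi\|_2=1$; then $V_\phi:M^{p,q}_m(\rd)\to L^{p,q}_m(\rdd)$ is an isometric embedding (up to equivalence of norms), and the inversion formula $f=\int V_\phi f(z)\,\pi(z)\phi\,dz=:V_\phi^* V_\phi f$ furnishes a bounded left inverse $V_\phi^*$. The composition $P=V_\phi V_\phi^*$ is the orthogonal projection of $L^2(\rdd)$ onto $\mathrm{Ran}(V_\phi)$, and it extends boundedly to $L^{p,q}_m(\rdd)$ for every $1\le p,q\le\infty$, $m\in\mathcal{M}_v(\rdd)$, via a standard convolution-type estimate whose kernel involves $V_\phi\phi\in\cS(\rdd)$.

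\textbf{Density of $\cS(\rd)$.} For $1\le p,q<\infty$, I would approximate $F:=V_\phi f\in L^{p,q}_m$ in norm by a sequence $F_n\in C_c^\infty(\rdd)$, possible since $p,q<\infty$, and set $f_n:=V_\phi^*F_n=\int F_n(z)\pi(z)\phi\,dz$. Compact support of $F_n$ together with $\phi\in\cS(\rd)$ and differentiation under the integral show that $f_n\in\cS(\rd)$ (rapid decay in position and frequency both being inherited from $\phi$ over the bounded support of $F_n$). Boundedness of $P$ then yields
\[
\|f-f_n\|_{M^{p,q}_m}\asymp\|V_\phi f-V_\phi f_n\|_{L^{p,q}_m}=\|F-PF_n\|_{L^{p,q}_m}\lesssim\|F-F_n\|_{L^{p,q}_m}\to 0.
\]

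\textbf{Duality and sup formula.} The identity $\langle f,g\rangle=\langle V_\phi f,V_\phi g\rangle_{L^2(\rdd)}$ (Moyal's formula, extended by density from $L^2$) combined with H\"older's inequality on $L^{p,q}_m\times L^{p',q'}_{1/m}$ yields $|\langle f,g\rangle|\lesssim\|f\|_{M^{p,q}_m}\|g\|_{M^{p',q'}_{1/m}}$, so $M^{p',q'}_{1/m}\hookrightarrow(M^{p,q}_m)^*$. For surjectivity, given $\ell\in(M^{p,q}_m)^*$, I would precompose with $V_\phi^*$ to obtain a functional on $\mathrm{Ran}(V_\phi)\subset L^{p,q}_m$, extend it by Hahn--Banach to all of $L^{p,q}_m$, represent it (using $1\le p,q<\infty$) by some $H\in L^{p',q'}_{1/m}$, and set $g:=V_\phi^*H\in M^{p',q'}_{1/m}$; the equality $\ell=\langle\cdot,g\rangle$ holds first on the dense subspace $\cS(\rd)$ and extends by continuity. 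For \eqref{E0}, when $1<p,q\le\infty$ the conjugates satisfy $1\le p',q'<\infty$, so the duality step applied with $(p,q,m)$ replaced by $(p',q',1/m)$ gives $M^{p,q}_m=(M^{p',q'}_{1/m})^*$ isometrically; the standard dual-norm identity, combined with density of $\cS(\rd)$ in $M^{p',q'}_{1/m}(\rd)$ from the previous step, then yields the claim. The principal technical point throughout is the boundedness of the reproducing projection $P$ on $L^{p,q}_m$, which underpins both the density argument and the surjectivity of the dual map; everything else is a transfer from Lebesgue spaces along the isometry $V_\phi$.
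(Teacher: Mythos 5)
The paper does not prove this lemma; it cites it to the Cordero--Rodino monograph \cite{Elena-book}, so there is no in-paper argument to compare against. Your proposal is the standard textbook proof and is correct in substance: transfer everything to $L^{p,q}_m(\rdd)$ along $V_\phi$, and let the boundedness of the reproducing projection $P=V_\phi V_\phi^*$ do the work. You rightly identify that boundedness of $P$ on $L^{p,q}_m$ (a convolution estimate with kernel controlled by $V_\phi\phi\in\cS$) is the key technical ingredient underlying all three assertions. Two small points worth tightening. First, in the density step the estimate $\norm{F-PF_n}_{L^{p,q}_m}\lesssim\norm{F-F_n}_{L^{p,q}_m}$ uses $PF=F$, which holds because $F=V_\phi f\in\mathrm{Ran}(V_\phi)$; this should be said. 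Second, the phrase ``precompose with $V_\phi^*$ to obtain a functional on $\mathrm{Ran}(V_\phi)$'' is misstated: $\ell\circ V_\phi^*$ is already a bounded functional on all of $L^{p,q}_m$, so no Hahn--Banach extension is needed (alternatively, one can define $\tilde\ell$ only on $\mathrm{Ran}(V_\phi)$ by $\tilde\ell(V_\phi f)=\ell(f)$ and extend by $\tilde\ell\circ P$; both routes work). Finally, the closing verification that $\ell(f)=\la f,g\ra$ with $g=V_\phi^*H$ uses Moyal, the self-adjointness of $P$ under the $L^{p,q}_m$--$L^{p',q'}_{1/m}$ pairing, and $PV_\phi f=V_\phi f$; this chain is implicit in your writeup but should be spelled out, since that is precisely where the arbitrary representative $H$ is replaced by its projection onto the reproducing-kernel subspace. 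With those clarifications, the argument is complete and matches the proof in the cited reference.
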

			Metaplectic operators exhibit nice continuity properties on modulation spaces, c.f. \cite{Fuhr, CG2023}.
			\begin{proposition}\label{prop25}
				Let $\hat S\in \mbox{Mp}(d,\bR)$, $m\in\mathcal{M}_v(\rdd)$, $m\circ S\asymp m$.\\
				(i) $\hat S:M^p_m(\rd)\to M^p_m(\rd)$ is bounded (and a homeomorphism) for every $0<p\leq\infty$.\\
				(ii) If $S$ is upper block triangular, $\hat S:M^{p,q}_m(\rd)\to M^{p,q}_m(\rd)$ is bounded (and a homeomorphism) for every $0<p,q\leq\infty$. 
				\end{proposition}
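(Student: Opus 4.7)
The strategy is to combine the Wigner-based characterization of modulation spaces (Theorem \ref{thm26}) with the classical symplectic covariance of the cross-Wigner distribution, reducing boundedness to a change of variable in a (mixed-norm) Lebesgue integral. The covariance identity I will use is
\[
	W(\hat S f,\hat S g)(z)=W(f,g)(S^{-1}z),\qquad z\in\rdd,
\]
which is a standard fact (it follows from the intertwining property of metaplectic operators with the Weyl calculus, or can be checked generator-by-generator via Proposition \ref{genSp} together with the lifting result of Theorem \ref{lift}, bookkeeping the complex conjugation in \eqref{MWW}). Note that the Wigner distribution itself is the metaplectic Wigner distribution associated with $\hat A_{1/2}$, and from \eqref{defA12} one reads $E_{A_{1/2}}=\tfrac12 I_{2d\times 2d}$, which is upper block triangular, so both parts of Theorem \ref{thm26} apply to $W$.

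For part (ii), fix $\psi\in\cS(\rd)\setminus\{0\}$ and set $g:=\hat S^{-1}\psi$, which lies in $\cS(\rd)\setminus\{0\}$ by Proposition \ref{prop25} applied trivially (more precisely, Proposition 2.2(ii)). By the covariance identity, $W(\hat S f,\psi)=W(f,g)\circ S^{-1}$, so Theorem \ref{thm26}(i) gives
\[
	\|\hat Sf\|_{M^{p,q}_m}\asymp\|W(\hat S f,\psi)\|_{L^{p,q}_m}=\|W(f,g)\circ S^{-1}\|_{L^{p,q}_m}.
\]
Writing $S=\begin{pmatrix}A & B\\ 0_{d\times d} & D\end{pmatrix}$, the inverse $S^{-1}$ is also upper block triangular, so the substitution $w=S^{-1}z$ decouples: the new second component depends only on the old $\xi$-variable, which allows Fubini to apply with Jacobians $|\det A|$ inside and $|\det D|$ outside. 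The hypothesis $m\circ S\asymp m$ absorbs the weight change, and one concludes $\|W(f,g)\circ S^{-1}\|_{L^{p,q}_m}\asymp\|W(f,g)\|_{L^{p,q}_m}\asymp\|f\|_{M^{p,q}_m}$, again by Theorem \ref{thm26}(i).

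Part (i) follows by the same scheme but using Theorem \ref{thm26}(ii), which characterizes $\|\cdot\|_{M^p_m}$ through $\|W(\cdot,g)\|_{L^p_m}$ without any triangularity assumption on the underlying $\hat\cA$; the change of variable in a single $L^p_m$ integral is immediate since $|\det S|=1$ and $m\circ S\asymp m$. Finally, the homeomorphism statements are obtained by running the same argument for $\hat S^{-1}\in\mathrm{Mp}(d,\bR)$: its symplectic projection $S^{-1}$ is upper block triangular whenever $S$ is, and substituting $z=S^{-1}w$ shows that $m\circ S\asymp m$ is equivalent to $m\circ S^{-1}\asymp m$. The only genuine obstacle is the mixed-norm change of variable in part (ii), where upper-block-triangularity of $S$ is crucial; a generic symplectic $S$ mixes the $x$ and $\xi$ coordinates in a way that $L^{p,q}$ does not see as a bounded change, which is exactly why the hypothesis appears. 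The quasi-Banach range $0<p,q<1$ causes no additional trouble because Theorem \ref{thm26} is valid throughout this range, so no duality is invoked.
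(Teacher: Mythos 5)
Your proof is correct. The paper does not supply a proof of this proposition, citing \cite{Fuhr, CG2023} instead; the symplectic covariance $W(\hat S f,\hat S g)=W(f,g)\circ S^{-1}$ together with a change of variables in the mixed-norm (resp.\ single-exponent) Lebesgue integral is exactly the standard argument those references employ, and your bookkeeping of the Jacobians (with $\det A\cdot\det D=1$ for upper block triangular symplectic $S$, so that $|\det A|^{q/p}|\det D|$ is a harmless constant) is sound. The only blemish is the momentary circular self-reference to ``Proposition \ref{prop25}'' when justifying $g:=\hat S^{-1}\psi\in\cS(\rd)\setminus\{0\}$; the fact you want is the unlabeled proposition stating that metaplectic operators are homeomorphisms of $\cS(\rd)$, which you then correctly identify. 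One further, minor point worth being explicit about: invoking Theorem \ref{thm26} with $\cA=A_{1/2}$ needs $m\circ E_{A_{1/2}}=m(\cdot/2)\asymp m$, which holds for the polynomial weights $v_s$ and $1\otimes v_s$ used throughout the paper but is not literally among the stated hypotheses on a general $v$-moderate $m$; this is inherited from the paper's own choice to define $M^{p,q}_m$ via the Wigner distribution, so it is not a gap in your argument specifically.
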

				
			 We will make use of the following lemma.
			
			\begin{lemma}\label{lemma28}
				Let $\varphi(t)=e^{-\pi|t|^2}$ and $\Phi=\f\otimes \f$. Then, for every $f\in \cS'(\rd)$,
				\begin{equation}
					W(f\otimes \bar f,\Phi)(x,y,\xi,\eta)=W(f,\f)(x,\xi)\overline{\cI_2W(f,\f)(y,\eta)}, \qquad x,\xi,y,\eta\in\rd,
				\end{equation}
				where $\cI_2F(x,\xi)=F(x,-\xi)$ is the flip operator in the frequency variables.
			\end{lemma}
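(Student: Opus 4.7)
The plan is a direct computation from the definition \eqref{WigDist} of the cross-Wigner distribution in dimension $2d$, exploiting the tensor-product structure of both $f\otimes\bar f$ and $\Phi=\varphi\otimes\varphi$, together with the fact that $\varphi$ is real-valued. First, I would assume $f\in\cS(\rd)$ so that all integrals converge absolutely and Fubini is available.

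Writing a point of $\bR^{4d}$ as $(x,y,\xi,\eta)$ and the integration variable as $(t_1,t_2)\in\rdd$, formula \eqref{WigDist} applied in dimension $2d$ gives
\[
W(f\otimes\bar f,\Phi)(x,y,\xi,\eta)=\iint_{\rdd} f(x+t_1/2)\,\overline{f(y+t_2/2)}\,\varphi(x-t_1/2)\,\varphi(y-t_2/2)\,e^{-2\pi i(\xi t_1+\eta t_2)}\,dt_1\,dt_2,
\]
where I used $\overline\varphi=\varphi$. By Fubini this factors as a product of two one-dimensional integrals. The $t_1$-integral is recognized immediately as $W(f,\varphi)(x,\xi)$, and the $t_2$-integral can be rewritten as
\[
\int_{\rd}\overline{f(y+t_2/2)}\,\varphi(y-t_2/2)\,e^{-2\pi i\eta t_2}\,dt_2=\overline{\int_{\rd}f(y+t_2/2)\,\varphi(y-t_2/2)\,e^{2\pi i\eta t_2}\,dt_2}=\overline{W(f,\varphi)(y,-\eta)},
\]
which is exactly $\overline{\cI_2 W(f,\varphi)(y,\eta)}$, establishing the identity for Schwartz $f$.

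Finally, I would extend the identity to arbitrary $f\in\cS'(\rd)$ by density. The map $f\mapsto f\otimes\bar f$ is continuous from $\cS'(\rd)$ into $\cS'(\rdd)$ (in the weak-$*$ topologies), so by Proposition \ref{propcont}(iii) applied in dimension $2d$ with the second argument fixed at $\Phi\in\cS(\rdd)$, the left-hand side depends continuously on $f\in\cS'(\rd)$. Analogously, the right-hand side is the pointwise product of $W(f,\varphi)(x,\xi)$ and $\overline{W(f,\varphi)(y,-\eta)}$, both continuous in $f\in\cS'(\rd)$ by the same proposition (and the continuity of the flip $\cI_2$ and of complex conjugation). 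Since $\cS(\rd)$ is dense in $\cS'(\rd)$, the identity propagates from $\cS(\rd)$ to all of $\cS'(\rd)$.

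No single step is genuinely delicate: the computation is a textbook manipulation, and the only mild subtlety is that the map $f\mapsto f\otimes\bar f$ is conjugate-linear in the second slot, which is harmless for the continuity-and-density argument.
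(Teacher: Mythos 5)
Your proof is correct, and the computational heart — factoring the $2d$-dimensional Wigner integral into two $d$-dimensional pieces, recognizing the first as $W(f,\varphi)(x,\xi)$, and pulling the conjugate through the second to get $\overline{W(f,\varphi)(y,-\eta)}$ — matches the paper's identity exactly. What differs is how the two proofs handle the extension to $f\in\cS'(\rd)$. You first prove the formula for $f\in\cS(\rd)$ via the integral definition \eqref{WigDist} and then propagate it by a density/continuity argument, invoking sequential continuity of $f\mapsto f\otimes\bar f$ from $\cS'(\rd)$ to $\cS'(\rdd)$ and of $W(\cdot,\Phi)$ on $\cS'$. The paper instead works directly with the dual-pairing formula \eqref{defW2}, $W(f,g)(x,\xi)=\la f,\pi_w(x,\xi)g\ra$, which is valid verbatim for $f\in\cS'(\rd)$ with a Schwartz window; the key observation is the factorization $\pi_w(x,y,\xi,\eta)\Phi=\pi_w(x,\xi)\varphi\otimes\pi_w(y,\eta)\varphi$, after which $\la f\otimes\bar f,\cdot\ra$ splits by the defining property of the tensor product of distributions. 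This sidesteps any limiting argument entirely. Your route is perfectly valid, but the continuity step you label as "not delicate" does quietly rely on a nontrivial fact — sequential hypocontinuity of the distributional tensor product together with uniform-on-compacts convergence of the STFT with a Schwartz window to get the pointwise identity — and you should either cite it or argue it; the paper's formulation renders it unnecessary.
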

			\begin{proof}
			It is a straightforward computation. We write it for sake of clarity.	We use \eqref{defW2}, together with:
				\begin{align*}
					\pi_w(x,y,\xi,\eta)\Phi(u,v)&=2^{2d}e^{4\pi i[(u-x)\xi+(v-y)\eta]}(\f\otimes\f)(2x-u,2y-v)\\
					&=[2^de^{4\pi i(u-x)\xi}\f(2x-u)]\cdot[2^de^{4\pi i(v-y)\eta}\f(2y-v)]\\
					&=\pi_w(x,\xi)\f(u)\pi_w(y,\eta)\f(v)\\
					&=(\pi_w(x,\xi)\f\otimes\pi_w(y,\eta)\f)(u,v).
				\end{align*}
				It follows that:
				\begin{align*}
					W(f\otimes\bar f,\Phi)(x,y,\xi,\eta)&=\la f\otimes\bar f,\pi_w(x,y,\xi,\eta)\Phi\ra
					=\la f\otimes\bar f,\pi_w(x,\xi)\f\otimes\pi(y,\eta)\f\ra\\
					&=\la f,\pi_w(x,\xi)\f\ra\la \bar f,\pi_w(y,\eta)\f\ra
					=\la f,\pi_w(x,\xi)\f\ra\overline{\la f,{\pi_w(y,-\eta)\f}\ra}\\
					&=W(f,\f)(x,\xi)\overline{W(f,\f)(y,-\eta)}
					=W(f,\f)(x,\xi)\overline{\cI_2W(f,\f)(y,\eta)},
				\end{align*}
				where we used that
				\[
					\overline{\pi_w(y,\eta)\f(t)}=2^de^{4\pi i(t-y)(-\eta)}\f(t-y)=\pi_w(y,-\eta)\f(t).
				\]
				This concludes the proof.
			\end{proof}
			
			We will also use the following corollary of Lemma \ref{lemma28}.
			\begin{corollary}\label{cor29}
			Consider $0<p,q\leq\infty$, $s\geq0$, and let $m$ be either $m(w,z)=v_s(w,z)$ or $m(w,z)=(1\otimes v_s)(w,z)$. Then,\\
				(i) If $f\in M^{p,q}_{m}(\rd)$ then $f\otimes \bar f\in M^{p,q}_{m}(\rdd)$ with
				\begin{equation}\label{E1}
				\norm{f\otimes\bar f}_{M^{p,q}_{ m}}\leq\|f\|^2_{M^{p,q}_{ m}}.
				\end{equation}
				(ii) We have the characterization: $f\in M^{p,q}(\rd)$ if and only if $f\otimes\bar f\in M^{p,q}(\rdd)$ with 	\begin{equation}\label{E2}
					\norm{f\otimes\bar f}_{M^{p,q}}=\|f\|^2_{M^{p,q}}.
				\end{equation}
			\end{corollary}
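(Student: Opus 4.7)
The plan is to exploit the product structure exhibited by Lemma \ref{lemma28} and to combine it with (sub)mul\-ti\-pli\-ca\-ti\-vi\-ty of the weights $v_s$. Concretely, take as windows the Gaussian $\varphi(t)=e^{-\pi|t|^2}$ on $\R^d$ and $\Phi=\varphi\otimes\varphi$ on $\R^{2d}$. Lemma \ref{lemma28} gives the pointwise identity
\[
|W(f\otimes\bar f,\Phi)(x,y,\xi,\eta)|=|W(f,\varphi)(x,\xi)|\,|W(f,\varphi)(y,-\eta)|,\qquad x,y,\xi,\eta\in\R^d,
\]
so the modulus of the $4d$-dimensional STFT of $f\otimes\bar f$ factors as a tensor product (after the change of variable $\eta\mapsto -\eta$ in the second factor).

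Next I would handle the weight. For $m=v_s$ on $\R^{4d}$, submultiplicativity together with $s\ge 0$ yields
\[
v_s(x,y,\xi,\eta)\le v_s(x,\xi)\,v_s(y,\eta),
\]
while for $m=1\otimes v_s$ it suffices to use $v_s(\xi,\eta)\le v_s(\xi)\,v_s(\eta)$; in either case the weight on $\R^{4d}$ is dominated by a product of the corresponding weights evaluated at $(x,\xi)$ and $(y,\eta)$ (using also $v_s(y,-\eta)=v_s(y,\eta)$). Inserting this into the pointwise identity above, the integrand of $\|f\otimes\bar f\|_{M^{p,q}_m}^q$ splits into a product depending separately on $(x,\xi)$ and $(y,\eta)$. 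Then Tonelli's theorem applied to the mixed $L^{p,q}$ norm on $\R^{4d}$ (with $(x,y)\in\R^{2d}$ the position variables and $(\xi,\eta)\in\R^{2d}$ the frequency variables) factors the double integral into the product of two identical copies of $\|W(f,\varphi)\|_{L^{p,q}_{v_s}}^q$, after the harmless change of variables $\eta\mapsto-\eta$. This produces the estimate \eqref{E1} in part (i).

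For part (ii) I would run exactly the same argument with $m\equiv 1$. In this case no inequality enters from submultiplicativity, so Tonelli yields an equality: using the Gaussian windows $\varphi$ and $\Phi=\varphi\otimes\varphi$,
\[
\|f\otimes\bar f\|_{M^{p,q}}=\|W(f\otimes\bar f,\Phi)\|_{L^{p,q}(\R^{4d})}=\|W(f,\varphi)\|_{L^{p,q}(\rdd)}^{2}=\|f\|_{M^{p,q}}^{2},
\]
establishing \eqref{E2}. The converse implication in (ii) is free: if $f\otimes\bar f\in M^{p,q}(\rdd)$, then by the identity just proved $\|f\|_{M^{p,q}}^2=\|f\otimes\bar f\|_{M^{p,q}}<\infty$, so $f\in M^{p,q}(\R^d)$.

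The only genuinely delicate point is keeping the four variables correctly organized when applying Tonelli to the mixed $L^{p,q}$ norm on $\R^{4d}$: one must be careful that the norm groups $(x,y)$ as the inner variables (with exponent $p$) and $(\xi,\eta)$ as the outer ones (with exponent $q$), so that the product structure in $(x,\xi)$ vs.\ $(y,\eta)$ indeed separates. Everything else is routine; in particular the equality in (ii) is the reason why one picks the specific tensor-product Gaussian window $\Phi=\varphi\otimes\varphi$, which is in fact the natural choice suggested by Lemma \ref{lemma28}.
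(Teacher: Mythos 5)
Your proof is correct and follows essentially the same route as the paper: fix the Gaussian windows $\varphi$ and $\Phi=\varphi\otimes\varphi$, invoke Lemma \ref{lemma28} to factor $W(f\otimes\bar f,\Phi)$, dominate the weight by a tensor product via submultiplicativity, and split the mixed norm with Tonelli, with (ii) coming from the observation that the weight inequality is an equality when $s=0$. The extra care you take in tracking the $(x,y)$--versus--$(\xi,\eta)$ grouping for the $L^{p,q}$ norm is implicit in the paper's computation and does not change the argument.
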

			\begin{proof} The proof is straightforward, we detail it for sake of clarity. Consider  $\f(t)=e^{-\pi|t|^2}$,  $\Phi=\f\otimes\f$ and $m(x,y,\xi,\eta)= v_s(x,y,\xi,\eta)$. By Lemma \ref{lemma28}, and using 
				$v_s(x,y,\xi,\eta)\leq v_s(x,\xi)v_s(y,\eta)$,
				\begin{align*}
					&\norm{f\otimes\bar f}_{M^{p,q}_{ v_s}}=\norm{W(f\otimes\bar f,\Phi)}_{L^{p,q}_{ v_s}}=\norm{W(f,\f)\otimes\overline{\cI_2W(f,\f)}}_{L^{p,q}_{ v_s}}\\
					&\leq\Big(\int_{\rdd}\Big(\int_{\rdd}|W(f,\f)(x,\xi)|^pv_s(x,\xi)^p|W(f,\f)(y,-\eta)|^pv_s(y,\eta)^pdxdy\Big)^{q/p}d\xi d\eta\Big)^{1/q}.
				\end{align*}

				Observe that if $s=0$, the previous inequality is an equality. Let us focus on the inner integral: by Tonelli's theorem,
				\begin{align*}
					\int_{\rdd}|W(f,\f)(x,\xi)|^p&v_s(x,\xi)^p|W(f,\f)(y,-\eta)|^pv_s(y,\eta)^pdxdy\\
					&=\int_{\rd} |W(f,\f)(x,\xi)|^pv_s(x,\xi)^pdx\int_{\rd}|W(f,\f)(y,-\eta)|^pv_s(y,\eta)^pdy\\
					&=\int_{\rd} |W(f,\f)(x,\xi)|^pv_s(x,\xi)^pdx\int_{\rd}|W(f,\f)(y,\eta)|^pv_s(y,\eta)^pdy.
				\end{align*}
				Therefore,			
\begin{align*}
					\norm{f\otimes\bar f}_{M^{p,q}_{1\otimes v_s}}&\leq\norm{W(f,\f)}_{L^{p,q}_{ v_s}}^2=\norm{f}_{M^{p,q}_{ v_s}}^2.
				\end{align*}
				The case  $m(w,z)=(1\otimes v_s)(w,z)$ is proved analogously, using $v_s(\xi,\eta)\leq v_s(\xi)v_s(\eta)$. $(ii)$ follows from $(i)$ and the observation for $s=0$.
			\end{proof}
			
	\subsection{Pseudodifferential operators and Fourier integral operators of type I}
	In this work we limit to consider Fourier integral operators with phases represented by quadratic polynomials, related studies are in \cite{CSW}. Namely, 
		\begin{equation}\label{QuadTamePhase}
		\Phi(x,\eta)=\frac{1}{2}xC^{-1}x+\eta A^{-1}x-\frac{1}{2}\eta A^{-1}B\eta, \qquad x,\eta\in\rd.
	\end{equation}
	The corresponding \emph{canonical transformation} (see \cite{CRGFIO1}) is a linear mapping $S:\rdd\to\rdd$ represented by the symplectic matrix $S$ having block decomposition \eqref{decompS}, with $\det A\neq0$.
	
	Let $\sigma\in\cS'(\rdd)$ and $\Phi$ a phase as above. The \emph{Fourier integral operator} (FIO) of type I with \emph{symbol} $\sigma$ and tame phase $\Phi$ is the linear operator $T_I:\cS(\rd)\to\cS'(\rd)$ given by:
	\begin{equation*}
		T_If(x)=\int_{\rd}\sigma(x,\eta)\hat f(\eta)e^{2\pi i\Phi(x,\eta)}d\eta, \qquad f\in\cS(\rd).
	\end{equation*}
	 the integral above must be interpreted in the weak sense of distributions, i.e., $Tf\in\cS'(\rd)$ is the tempered distribution defined by its action on $g\in\cS(\rd)$ by:
	\[
		\la Tf,g\ra = \la \sigma , \phi W_0(g,f)\ra.
	\]
	where $W_0$ is the (cross-)Rihacek distribution defined in Example \ref{exW0} and $\phi(u,v)=e^{-2\pi i[\Phi(u,v)-uv]}$.
	
	\begin{example}\label{Pseudo} Particular instances of FIOs are \emph{pseudodifferential operators}, which correspond to the choice $\Phi(x,\eta)=x\eta$ in \eqref{FIO1}. Namely, for a fixed $\sigma\in\cS'(\rdd)$, the Kohn-Nirenber operator with symbol $\sigma$ is  $\sigma(x,D):\cS(\rd)\to\cS'(\rd)$ given by \eqref{PseudoDef}.
	\end{example}
	
	The Schwartz kernel of a bounded operator $T:\cS(\rd)\to\cS'(\rd)$ is the (unique) tempered distribution $k_T\in\cS'(\rdd)$ such that:
	\begin{equation}\label{Skernel}
		\la Tf,g\ra = \la k_T,g\otimes \bar f\ra, \qquad f,g\in\cS(\rd).
	\end{equation}
	The Schwartz kernel of a pseudodifferential operator $\sigma(x,D)$ is given by:
	\begin{equation}\label{kerandsymb}
		k_T(x,\xi)=\cF_2\sigma(x,\xi-x), 
	\end{equation}
	where $\cF_2$ is the partial Fourier transform defined in \eqref{FT2}. If $T_1,T_2:\cS(\rd)\to\cS'(\rd)$ are bounded and linear, and $T=T_1T_2:\cS(\rd)\to\cS'(\rd)$ is bounded and linear, the relation between the Schwartz kernels $k_{T_j}$ of $T_j$ ($j=1,2$) and the Schwartz kernel $K_T$ of $T$ is:
	\begin{equation}\label{Skercomp}
		k_T(x,y)=\int_{\rd}k_{T_1}(x,z)k_{T_2}(z,y)dz.
	\end{equation}As customary,

	Any metaplectic Wigner distribution $W_\cA$ can be used as a quantization for pseudodifferential operators.
	\begin{definition}\label{pseudoA}
		Let $W_\cA$ be a metaplectic Wigner distribution. Let $\sigma\in\cS'(\rdd)$. The metaplectic pseudodifferential operator with symbol $\sigma$ and quantization $W_\cA$ is the operator $Op_\cA(\sigma):\cS(\rd)\to\cS'(\rd)$ such that:
		\[
			\la Op_\cA(\sigma)f,g\ra=\la \sigma,W_\cA(g,f)\ra, \qquad f,g\in\cS(\rd).
		\]
	\end{definition}
	
	The Schwartz kernel $k_T$ of a metaplectic pseudodifferential operator $T=Op_\cA(\sigma)$ is related to the symbol $\sigma$ by the relation:
	\begin{equation}\label{genkersymb}
		k_T=\hat\cA^{-1}\sigma.
	\end{equation}
	
	\begin{example}
		(i) The Kohn-Nirenberg quantization of Example \ref{Pseudo} corresponds to Definition \ref{pseudoA} with $W_\cA=W_0$, where $W_0$ is the (cross)-Rihacek distribution of Example \ref{exW0}. Formula \eqref{kerandsymb} is just a restatement of \eqref{genkersymb} for this particular case.
		(ii) The Weyl quantization corresponds to the choice $W_\cA=W$, the (cross-)Wigner distribution. 
	\end{example}
	
In this paper we will use the  notation: $$a^w(x,D):=Op_{A_{1/2}}(a).$$
	For the theory of metaplectic pseudodifferential operators, we refer to \cite{CGRPartII2022}. In this work, we shall use the following change-of-quantization law.
	
	\begin{theorem}\label{changeOfQuant}
		Let $W_\cA$ and $W_\cB$ be metaplectic Wigner distributions. Then, $Op_\cA(a)=Op_\cB(b)$ if and only if $b=\hat\cB\hat\cA^{-1}a$.
	\end{theorem}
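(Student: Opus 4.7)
The plan is to reduce the statement to the identity \eqref{genkersymb}, which relates the Schwartz kernel of a metaplectic pseudodifferential operator to its symbol via the inverse metaplectic operator. The Schwartz kernel theorem guarantees that a continuous linear operator $T:\cS(\rd)\to\cS'(\rd)$ is uniquely determined by its Schwartz kernel $k_T\in\cS'(\rdd)$, so two such operators agree if and only if their kernels coincide. Therefore, the whole statement becomes a simple algebraic manipulation once \eqref{genkersymb} is in hand.

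First, I would recall (or quickly re-derive) \eqref{genkersymb}. Starting from Definition \ref{pseudoA}, for $a\in\cS'(\rdd)$ and $f,g\in\cS(\rd)$,
\[
\la Op_\cA(a)f,g\ra = \la a,W_\cA(g,f)\ra = \la a,\hat\cA(g\otimes\bar f)\ra = \la \hat\cA^{-1}a,g\otimes\bar f\ra,
\]
where in the last step I use that $\hat\cA\in\mbox{Mp}(d,\bR)$ is unitary on $L^2(\rdd)$ and extends as a homeomorphism of $\cS'(\rdd)$ whose adjoint (in the duality pairing) is $\hat\cA^{-1}$. Comparing with the defining identity \eqref{Skernel} of the Schwartz kernel, uniqueness gives $k_{Op_\cA(a)}=\hat\cA^{-1}a$. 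The same computation, applied to $Op_\cB(b)$, yields $k_{Op_\cB(b)}=\hat\cB^{-1}b$.

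Now the equivalence follows immediately: $Op_\cA(a)=Op_\cB(b)$ as operators $\cS(\rd)\to\cS'(\rd)$ iff their Schwartz kernels coincide, i.e.\ $\hat\cA^{-1}a=\hat\cB^{-1}b$ in $\cS'(\rdd)$. Since $\hat\cB\in\mbox{Mp}(d,\bR)$ is a bijection on $\cS'(\rdd)$, applying $\hat\cB$ to both sides gives the equivalent condition $b=\hat\cB\hat\cA^{-1}a$, which is exactly the claim.

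There is no real obstacle here; the only point to handle with some care is the duality manipulation $\la a,\hat\cA u\ra=\la \hat\cA^{-1}a,u\ra$ in $\cS'\times\cS$, which must be justified via the convention for extending metaplectic operators to $\cS'(\rd)$ (cf.\ Proposition in Subsection 2.2) and the sesquilinearity convention $\la f,g\ra=\int\overline{f}g$ fixed in Section \ref{sec:2}. Once this pairing identity is observed, the rest of the argument is purely formal, and the proof reduces to a one-line consequence of \eqref{genkersymb} and the Schwartz kernel theorem.
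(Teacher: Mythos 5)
Your proposal is correct and is the natural route: derive the kernel–symbol relation $k_{Op_\cA(a)}=\hat\cA^{-1}a$ from Definition~\ref{pseudoA} and the duality formula $\la a,\hat\cA u\ra=\la\hat\cA^{-1}a,u\ra$, then invoke uniqueness of the Schwartz kernel to reduce the equality of operators to $\hat\cA^{-1}a=\hat\cB^{-1}b$, which is clearly equivalent to $b=\hat\cB\hat\cA^{-1}a$ since $\hat\cB$ is a bijection of $\cS'(\rdd)$. The paper states Theorem~\ref{changeOfQuant} without proof (deferring to the cited literature), but it records \eqref{genkersymb} as the kernel–symbol relation, so your argument is precisely the intended one, and the careful point you flag — the extension of $\hat\cA$ to $\cS'$ by $\la\hat Sf,g\ra=\la f,\hat S^{-1}g\ra$ together with the paper's sesquilinear convention — is exactly the step that makes the pairing manipulation legitimate.
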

	
	Let us recall the following algebra and Wiener property for pseudodifferential operators \cite[Theorems 3.4 and 4.4]{CG2023quasi}, see also \cite{Charly-Toft-quasi2024}.
	\begin{theorem}\label{thmquasi}
	Let $s\geq0$ and $0<q\leq1$. Let $\sigma(x,D),\tau(x,D)$ be pseudodifferential operators with symbols $\sigma,\tau\in M^{\infty,q}_{1\otimes v_s}(\rdd)$. Then,\\
		$(i)$ $\sigma(x,D)\in B(L^2(\rd))$.\\
		$(ii)$ $\sigma(x,D)\tau(x,D)=\rho(x,D)$, with $\rho\in M^{\infty,q}_{1\otimes v_s}(\rdd)$.\\
		$(iii)$ If $\sigma(x,D)$ is invertible in $B(L^2(\rd))$, then $\sigma(x,D)^{-1}=\tau(x,D)$ for some $\tau\in M^{\infty,q}_{1\otimes v_s}(\rdd)$.
	\end{theorem}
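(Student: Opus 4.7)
The plan is to reduce all three statements to the Gabor matrix representation of $\sigma(x,D)$ and exploit the off-diagonal decay inherited from membership of $\sigma$ in $M^{\infty,q}_{1\otimes v_s}(\rdd)$. Fix a nonzero window $g\in\cS(\rd)$ and a lattice $\Lambda\subset\rdd$ such that $\{\pi(\lambda)g\}_{\lambda\in\Lambda}$ is a Gabor frame of $L^2(\rd)$ with dual window in $\cS(\rd)$. The almost-diagonalization principle for pseudodifferential operators (in the spirit of Gr\"ochenig--Rzeszotnik, adapted to the quasi-Banach setting in \cite{CG2023quasi}) yields
\begin{equation*}
|\la\sigma(x,D)\pi(\mu)g,\pi(\lambda)g\ra|\lesssim H(\lambda-\chi(\mu)),
\end{equation*}
where $\chi:\rdd\to\rdd$ is a fixed affine map tied to the Kohn--Nirenberg quantization and $H$ is pointwise controlled by the STFT of $\sigma$ with a suitable Schwartz window. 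Hence $\sigma\in M^{\infty,q}_{1\otimes v_s}(\rdd)$ forces $H\in L^{\infty,q}_{1\otimes v_s}(\rdd)$, so its lattice samples lie in $\ell^{\infty,q}_{1\otimes v_s}(\Lambda)$.

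Part $(i)$ then follows from a Schur-type test on the Gabor matrix: since $q\leq 1$ and $v_s\geq 1$, the $q$-decay in the frequency variable ensures that rows and columns of the matrix lie in $\ell^1$, so the frame analysis/synthesis maps transfer $\ell^2$-boundedness back to $L^2$-boundedness of $\sigma(x,D)$. For part $(ii)$, the composition $\sigma(x,D)\tau(x,D)$ corresponds, via the Gabor frame, to the product of the associated Gabor matrices. Using the $q$-triangle inequality $(|a|+|b|)^q\leq|a|^q+|b|^q$ available for $q\leq 1$ together with submultiplicativity of $v_s$, one checks that the product matrix retains an $\ell^{\infty,q}_{1\otimes v_s}$-type off-diagonal estimate; transporting back through the frame produces a symbol $\rho\in M^{\infty,q}_{1\otimes v_s}(\rdd)$ with $\sigma(x,D)\tau(x,D)=\rho(x,D)$.

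The hard part is $(iii)$, Wiener's property in the quasi-Banach regime, since neither the holomorphic functional calculus for Banach algebras nor the $C^*$-spectral-invariance machinery is directly applicable. My approach would be a Jaffard/Baskakov-type inversion analysis at the Gabor matrix level: if a matrix $M$ on $\ell^2(\Lambda)$ has $\ell^q_{v_s}$ off-diagonal decay and is $\ell^2$-invertible, then $M^{-1}$ inherits the same decay. The argument splits $M=M_N+R_N$ into a banded part and a remainder of small quasi-norm, and controls $(M_N+R_N)^{-1}$ via a Neumann-series expansion whose partial sums are estimated using the $q$-subadditivity of the quasi-norm on convolutions of decaying sequences. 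Transferring the result back through the isomorphism between $M^{\infty,q}_{1\otimes v_s}(\rdd)$ and the corresponding Wiener-amalgam-type sequence space then gives $\sigma(x,D)^{-1}=\tau(x,D)$ for some $\tau\in M^{\infty,q}_{1\otimes v_s}(\rdd)$.
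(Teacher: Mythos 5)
This statement is not proved in the paper: it is quoted verbatim from \cite[Theorems 3.4 and 4.4]{CG2023quasi}, so there is no in-paper argument to compare against. Your proposal therefore has to be judged on its own merits as a sketch of that cited result.

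Your treatment of $(i)$ and $(ii)$ is the right skeleton. Almost-diagonalization of the Kohn--Nirenberg quantization with respect to a Gabor frame, with the affine map $\chi$ accounting for the Rihacek twist, a Schur test for $L^2$-boundedness (using $\ell^{\infty,q}_{1\otimes v_s}\hookrightarrow\ell^1$ for $q\leq 1$ and $v_s\geq 1$), and then a convolution estimate for Gabor matrix products exploiting the $q$-triangle inequality, is indeed how the algebra property is obtained in this quasi-Banach regime. The one thing you gloss over in $(ii)$ is that after multiplying the two matrices you must re-express the product Gabor matrix as a matrix of the same form, i.e.\ recover a \emph{symbol} $\rho$ from a bounded operator whose Gabor matrix has the right decay; this is a separate (standard) step, not automatic from the matrix bound, but it is fixable with the converse direction of the almost-diagonalization theorem.

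For $(iii)$ there is a genuine gap. The splitting $M=M_N+R_N$ into a banded truncation $M_N$ and a small-quasi-norm tail $R_N$ does not by itself enable a Neumann series: to expand $(M_N+R_N)^{-1}=M_N^{-1}\bigl(I+R_NM_N^{-1}\bigr)^{-1}$ you would first need $M_N$ to be invertible (it need not be) and, more seriously, you would need $M_N^{-1}$ to lie in the quasi-Banach decay algebra, which fails in general since the inverse of a banded matrix is typically not banded and has no a priori off-diagonal decay. The smallness of $R_N$ in quasi-norm also does not control $\bigl(I+R_NM_N^{-1}\bigr)^{-1}$ unless $M_N^{-1}$ is already controlled. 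So the argument is circular at exactly the point where the work is. Spectral invariance of quasi-Banach decay algebras requires a more delicate mechanism than a one-shot banded-plus-remainder Neumann expansion. One standard route is to reduce to the positive case via $M^{-1}=M^\ast(MM^\ast)^{-1}$ and then run a Jaffard-type argument in which the polynomial (or iterative) approximants \emph{double} the decay rate at each step while the $\ell^2$-error shrinks geometrically, so that interpolating between the two keeps you inside the decay class; another is a bootstrap that first applies the Banach-algebra Wiener property at $q=1$ (where $M^{\infty,1}_{1\otimes v_s}$ is a Banach $*$-algebra with spectral invariance) to get $\tau\in M^{\infty,1}_{1\otimes v_s}$ and then upgrades to $M^{\infty,q}_{1\otimes v_s}$ through a careful perturbative or approximation argument. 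As written, your sketch names the right goal but does not supply either of these mechanisms, so $(iii)$ remains unproved.
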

For related  works in the framework of localization operators, see the recent developments in \cite{LUEF2018288,LUEF2019,LUEF2019-2}. 
\section{The Wigner kernel approach}\label{sec:3}

In this section we clarify the definition of the Wigner kernel and study the connection with the Schwartz kernel.
\begin{definition}\label{defWK}
	Let $T:\cS(\rd)\to\cS'(\rd)$ be a continuous linear operator. The Wigner kernel of $T$ is the distribution $k\in\cS'(\bR^{4d})$ such that 
	\begin{equation}\label{Wkernel}
		\la k,W(u,v)\otimes W(f,g)\ra=\la W(Tf,Tg),W(u,v)\ra, \qquad f,g,u,v\in\cS(\rd).
	\end{equation}
\end{definition}

\begin{remark}\label{rmrk32}
Assume that a Wigner kernel $k$ exists for the continuous linear operator $T$. Choosing $f=g$ in \eqref{Wkernel}, we retrieve \eqref{WignersWkernelIntegral}. In fact, interpreting the following integrals in the distributional sense,
\begin{align*}
	\la W(Tf),W(u,v)\ra & = \la k, W(u,v)\otimes Wf\ra\\
	& = \int_{\bR^{4d}}k(z,w)\overline{W(u,v)(z)}Wf(w)dzdw\\
	&=\int_{\rdd}\Big(\int_{\rdd}k(z,w)Wf(w)dw\Big)\overline{W(u,v)(z)}dz\\
	&=\Big\la \int_{\rdd}k(z,w)Wf(w)dw,W(u,v)\Big\ra.
\end{align*}
The density of $\mbox{span}\{W(u,v):u,v\in\cS(\rd)\}$ in $\cS(\rdd)$ entails formula \eqref{WignersWkernelIntegral}.
\end{remark}

\begin{theorem}\label{thmWkernel}
	Let $T:\cS(\rd)\to\cS'(\rd)$ be linear and bounded. Let $k_T\in\cS'(\rdd)$ be its Schwartz kernel, defined as in \eqref{Skernel}. Then, 
	\begin{equation}\label{kkT}
	k(x,\xi,y,\eta)=Wk_T(x,y,\xi,-\eta)
	\end{equation}
	is the unique tempered distribution satisfying \eqref{Wkernel}. Consequently, every continuous linear operator admits a unique Wigner kernel.
\end{theorem}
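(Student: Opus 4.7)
The plan separates the proof into verifying existence of the asserted formula $k(x,\xi,y,\eta) = Wk_T(x,y,\xi,-\eta)$ and establishing uniqueness via a density argument.

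For uniqueness, $\mathrm{span}\{W(u,v)\otimes W(f,g): u,v,f,g\in\cS(\rd)\}$ is dense in $\cS(\bR^{4d})$: this follows from the density of $\mathrm{span}\{W(u,v):u,v\in\cS(\rd)\}$ in $\cS(\rdd)$ (cited just before Definition \ref{1.1}) together with the standard density of Schwartz tensor products $\phi\otimes\psi$ in $\cS$ of the product space. Any two tempered distributions agreeing on this dense subspace coincide, so at most one $k\in\cS'(\bR^{4d})$ can satisfy \eqref{Wkernel}.

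For existence, I would verify the formula by a direct kernel computation. Inserting $Tf(u) = \int k_T(u,y)f(y)\,dy$ into
\[
W(Tf,Tg)(x,\xi) = \int_{\rd} Tf(x+t/2)\overline{Tg(x-t/2)}e^{-2\pi i\xi t}\,dt
\]
yields a triple integral in $(t,y_1,y_2)$. The decisive step is the symmetric change of variable $y_1 = y+s/2$, $y_2 = y-s/2$, which turns the $f,\overline{g}$-factor into $f(y+s/2)\overline{g(y-s/2)}$. Substituting the Fourier inversion identity $f(y+s/2)\overline{g(y-s/2)} = \int W(f,g)(y,\eta)e^{2\pi i\eta s}d\eta$ and interchanging the order of integration, the inner double integral in $(t,s)$ becomes
\[
\int\!\!\!\int k_T(x+t/2,y+s/2)\overline{k_T(x-t/2,y-s/2)}e^{-2\pi i\xi t + 2\pi i\eta s}\,dt\,ds = Wk_T(x,y,\xi,-\eta),
\]
so that $W(Tf,Tg)(x,\xi) = \int Wk_T(x,y,\xi,-\eta)W(f,g)(y,\eta)\,dy\,d\eta$. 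Pairing both sides against $W(u,v)$ as in Remark \ref{rmrk32} produces exactly \eqref{Wkernel} with the claimed $k=\mathfrak{T}_p Wk_T$.

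The main technical obstacle is that $T$ is only a continuous map $\cS(\rd)\to\cS'(\rd)$, so $k_T\in\cS'(\rdd)$ is a genuine distribution and the Fubini swaps and Fourier inversion above have to be justified distributionally rather than pointwise. A clean way to handle this is to recast the entire argument via Moyal's identity (Proposition \ref{propcont}): starting from $\la W(Tf,Tg),W(u,v)\ra = \la Tf,u\ra\overline{\la Tg,v\ra}$ (Moyal on $\rd$), the Schwartz kernel relation \eqref{Skernel} gives $\la k_T,u\otimes\bar f\ra\overline{\la k_T,v\otimes\bar g\ra}$, which equals $\la Wk_T, W(u\otimes\bar f,v\otimes\bar g)\ra$ by Moyal applied to the Wigner distribution on $\rdd$. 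The factorization $W(u\otimes\bar f, v\otimes\bar g)(x,y,\xi,\eta) = W(u,v)(x,\xi)\overline{W(f,g)(y,-\eta)}$, a direct generalization of Lemma \ref{lemma28} with the same tensor-product computation of $\pi_w$, followed by the change of variable $\eta\to -\eta$, then delivers the claim purely through distributional pairings, bypassing pointwise Fubini altogether.
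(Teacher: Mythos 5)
Your proof is correct. Note that the paper itself does not display a proof of this theorem; it defers to Theorem 3.3 of \cite{CRGFIO1}, so a direct line-by-line comparison is not possible. That said, your Moyal-based route is fully in the spirit of the surrounding machinery and is the cleaner of the two arguments you sketch: the chain
\[
\la W(Tf,Tg),W(u,v)\ra = \la Tf,u\ra\overline{\la Tg,v\ra}
= \la k_T,u\otimes\bar f\ra\,\overline{\la k_T,v\otimes\bar g\ra}
= \la Wk_T,\, W(u\otimes\bar f,v\otimes\bar g)\ra,
\]
followed by the tensor factorization $W(u\otimes\bar f,v\otimes\bar g)(x,y,\xi,\eta)=W(u,v)(x,\xi)\,\overline{W(f,g)(y,-\eta)}$ (the $\pi_w$-computation of Lemma \ref{lemma28} generalizes verbatim, and $W(\bar f,\bar g)(y,\eta)=\overline{W(f,g)(y,-\eta)}$), and the reordering $\mathfrak{T}_p$ with the sign flip $\eta\mapsto-\eta$, gives $k=\mathfrak{T}_pWk_T$ purely at the level of pairings. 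This sidesteps the distributional Fubini issues that plague the naive triple-integral computation, which is exactly the right move since $k_T$ is only in $\cS'(\rdd)$. Your uniqueness argument via density of $\mathrm{span}\{W(u,v)\otimes W(f,g)\}$ in $\cS(\bR^{4d})$ is also sound. One small point worth flagging, though it is not a gap in your argument: pairing Definition \ref{1.1} against $W(u,v)$ through the Schwartz kernel relation \eqref{Skernel} (applied to $K$) naturally produces $\la k,\,W(u,v)\otimes\overline{W(f,g)}\ra=\la k,\,W(u,v)\otimes W(g,f)\ra$, which is what your computation lands on; the displayed form \eqref{Wkernel} omits this conjugation, and the two readings coincide precisely when $f=g$ (the case used in Remark \ref{rmrk32}, where $Wf$ is real).
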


It is proved in \cite[Corollary 3.4]{CRGFIO1} that $k\in L^2(\bR^{4d})$ (resp. $\cS(\bR^{4d})$) if and only if $k_T\in L^2(\rdd)$ (resp. $\cS(\rdd)$). We show that a similar result holds in the wider context of modulation spaces.

\begin{proposition}\label{prop34}
	Let $T:\cS(\rd)\to\cS'(\rd)$ be a continuous linear operator, with Schwartz kernel $k_T$  and Wigner kernel $k$.	Consider $0<p,q\leq\infty$, $s\geq0$, and let either  $m=v_s$ or $m=(1\otimes v_s)$ (defined  either on $\rdd$ or on $\bR^{4d}$).  Then we have the following characterization:
	$$k\in M^p_{m}(\bR^{4d}) \Leftrightarrow k_T\in M^p_{m}(\rdd),$$ with
	\begin{equation}\label{th23bis}
	\norm{k}_{M^p_{m}}\asymp\norm{k_T}^2_{M^p_{m}}. 
	\end{equation}
\end{proposition}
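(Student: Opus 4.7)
The strategy is to use the identity $k=\mathfrak{T}_p W k_T$ from Theorem \ref{thmWkernel}, together with the tensor product decomposition $W k_T=\hat A_{1/2}(k_T\otimes\bar k_T)$ coming from \eqref{MWW}, and then reduce to Lemma \ref{lemma28} and Corollary \ref{cor29}.

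First, the map $\mathfrak{T}_p F(x,\xi,y,\eta)=F(x,y,\xi,-\eta)$ is a unitary permutation-plus-signflip change of variables on $L^2(\bR^{4d})$, hence of the form $\mathfrak{T}_L$ of Example \ref{ex23}(ii) and therefore metaplectic. Its symplectic projection acts on the phase space $\bR^{8d}$ by an orthogonal permutation, so both $v_s$ and $1\otimes v_s$ are invariant (in the second case one observes that positions and frequencies are not mixed). By Proposition \ref{prop25}(i),
\[
\norm{k}_{M^p_m(\bR^{4d})} \asymp \norm{W k_T}_{M^p_m(\bR^{4d})}.
\]

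Second, applying \eqref{MWW} at the level of $\cS'(\bR^{4d})$ yields $Wk_T=\hat A_{1/2}(k_T\otimes\bar k_T)$, with $\hat A_{1/2}=\cF_2\mathfrak{T}_w$ lifted to act on functions on $\bR^{4d}$. The key calculation is that its symplectic projection preserves each of the weights $v_s$ and $1\otimes v_s$ up to equivalence on $\bR^{8d}$: for $v_s$ this follows from the identity $|(u+v)/2|^2+|u-v|^2\asymp |u|^2+|v|^2$ applied blockwise, so that $A_{1/2}^{(2d)}$ is comparable to an isometry; the $1\otimes v_s$ case is checked by inspecting the action of $A_{1/2}^{(2d)}$ on the frequency coordinates, using Proposition \ref{genSp} to decompose $A_{1/2}^{(2d)}$ into the generators $V_C$, $\mathcal{D}_L$, $J$ if necessary. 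Proposition \ref{prop25}(i) then gives
\[
\norm{W k_T}_{M^p_m(\bR^{4d})}\asymp\norm{k_T\otimes\bar k_T}_{M^p_m(\bR^{4d})}.
\]

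Third, apply Lemma \ref{lemma28} with $f=k_T\in\cS'(\rdd)$ (taking $\varphi\in\cS(\rdd)$ and $\Phi=\varphi\otimes\varphi$), which gives the product decomposition
\[
|W(k_T\otimes\bar k_T,\Phi)(X,Y,\Xi,\mathrm{H})|=|W(k_T,\varphi)(X,\Xi)||W(k_T,\varphi)(Y,-\mathrm{H})|,
\]
for $X,Y,\Xi,\mathrm{H}\in\rdd$. Raising to the $p$-th power, integrating against $m^p$ on $\bR^{8d}$, and using that $v_s$ and $1\otimes v_s$ are invariant under $\mathrm{H}\mapsto-\mathrm{H}$, the computation reduces to comparing $m(X,Y,\Xi,\mathrm{H})$ with $m(X,\Xi)\,m(Y,\mathrm{H})$. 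Combined with the characterization $\norm{k_T}_{M^p_m}\asymp\norm{W(k_T,\varphi)}_{L^p_m}$ from Theorem \ref{thm26}(ii), this yields
\[
\norm{k_T\otimes\bar k_T}_{M^p_m(\bR^{4d})}\asymp\norm{k_T}^2_{M^p_m(\rdd)}.
\]
Chaining the three equivalences gives \eqref{th23bis}, and the characterization $k\in M^p_m(\bR^{4d})\Leftrightarrow k_T\in M^p_m(\rdd)$ follows.

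The main obstacle is the ``$\geq$'' direction in the last step: the submultiplicativity $m(X,Y,\Xi,\mathrm{H})\leq m(X,\Xi)\,m(Y,\mathrm{H})$ used in Corollary \ref{cor29} does not reverse pointwise. One must therefore exploit either the concentration of $|W(k_T,\varphi)|$ on regions where the weight estimate is tight, or pass through duality via Lemma \ref{lemmaE0}, which allows the reverse bound to be recovered through a pairing argument. This matching lower bound is the crux of the proof.
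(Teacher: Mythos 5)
The reduction $\norm{k}_{M^p_m}\asymp\norm{Wk_T}_{M^p_m}$ via $\mathfrak T_p$ is sound, and you correctly identify duality (Lemma~\ref{lemmaE0}) as the mechanism for the lower bound. However two concrete gaps remain. First, the middle step $\norm{Wk_T}_{M^p_m}\asymp\norm{k_T\otimes\bar k_T}_{M^p_m}$ obtained from Proposition~\ref{prop25}(i) applied to $\hat A_{1/2}$ (acting on $\bR^{4d}$) requires $m\circ A_{1/2}\asymp m$ on $\bR^{8d}$. For $m=v_s$ this is automatic (any invertible linear map preserves $v_s$ up to equivalence), but for $m=1\otimes v_s$ it is false: writing $A_{1/2}(z_1,z_2,w_1,w_2)=\big((z_1+z_2)/2,(w_1-w_2)/2,w_1+w_2,z_2-z_1\big)$, one gets $(1\otimes v_s)\circ A_{1/2}(z_1,z_2,w_1,w_2)=v_s(w_1+w_2,\,z_2-z_1)$, which depends on $z_1,z_2$ and is not comparable to $v_s(w_1,w_2)$ (take $w_1=w_2=0$, $z_1=0$, $|z_2|\to\infty$). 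Decomposing $A_{1/2}$ into the generators $V_C,\mathcal D_L,J$ via Proposition~\ref{genSp} does not rescue this: the hypothesis $m\circ S\asymp m$ is a property of $S=A_{1/2}$ itself, not of its factors. The paper avoids this intermediate altogether by passing from $\norm{Wk_T}_{M^p_m}$ directly to $\norm{W_\cA(k_T\otimes\bar k_T,\Phi)}_{L^p_m}$ with an explicit $8d\times 8d$ shift-invertible $\cA$ (so that $W(Wk_T,\Phi)=W_\cA(k_T\otimes\bar k_T,\Phi)$, with $E_\cA=\tfrac12 A_{1/2}$) and then invoking Theorem~\ref{thm26} together with Corollary~\ref{cor29}.

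Second, you explicitly flag the lower bound $\norm{k_T\otimes\bar k_T}_{M^p_m}\gtrsim\norm{k_T}^2_{M^p_m}$ as ``the crux'' without carrying it out, so the proof is not complete as written. The paper does not try to prove a lower bound for the tensor norm at all: it uses the identity $|\la k_T,g\ra|^2=\la k_T\otimes\bar k_T,g\otimes\bar g\ra=\la Wk_T,Wg\ra=\la k,Wg\ra$ and Lemma~\ref{lemmaE0} to get $\norm{k_T}^2_{M^p_m}=\big(\sup_{\|g\|_{M^{p'}_{1/m}}=1}|\la k_T,g\ra|\big)^2\lesssim\sup_{\|G\|_{M^{p'}_{1/m}}=1}|\la k,G\ra|=\norm{k}_{M^p_m}$. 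That duality device requires $1<p\leq\infty$, so the paper splits into $0<p\leq 2$ (handled by citing \cite[Theorem~2.16]{CRcharModSp}) and $p>2$ (where duality applies). Your proposal has no such case split, and the duality route alone cannot cover the quasi-Banach range $0<p\leq 1$.
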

\begin{proof}	
	By \eqref{kkT}, $k\in M^{p}_{ m}(\bR^{4d})$ if and only if $\mathfrak{T}_pWk_T\in M^{p}_{ m}(\bR^{4d})$, where $\mathfrak{T}_p$ is the change of variables in \eqref{kkT}. By item $(i)$ of Proposition \ref{prop25}, $\mathfrak{T}_pWk_T\in M^{p}_{ m}(\bR^{4d})$ if and only if $Wk_T\in M^{p}_{ m}(\bR^{4d})$. Hence,
	\[
		\norm{k}_{M^p_{m}}\asymp\norm{Wk_T}_{M^p_{m}}.
	\]

	If $0<p\leq2$, the assertion follows from \cite[Theorem 2.16]{CRcharModSp}, choosing $\cA=A_{1/2}$, as defined in \eqref{defA12}. We may assume $p>2$. Let $\Phi\in\cS(\bR^{4d})$. We have:
	\begin{align*}
		\norm{k}_{M^{p}_{ m}}\asymp\norm{Wk_T}_{M^{p}_{ m}}\asymp\norm{W(Wk_T, \Phi)}_{L^{p}_{ m}}=\norm{\hat\cA(k_T\otimes \bar k_T\otimes \Phi)}_{L^{p}_{ m}}=\norm{W_\cA(k_T\otimes\bar k_T,\Phi)}_{L^p_{m}},
	\end{align*}
	where $\hat\cA$ is the metaplectic operator associated to the matrix 
	\[
		\cA = \left(\begin{array}{c c | c c | c c | c c }
			I_{d\times d}/4 & I_{d\times d}/4 & I_{d\times d}/2 & 0_{d\times d} & 0_{d\times d} & 0_{d\times d} & 0_{d\times d} & 0_{d\times d}\\
			0_{d\times d} & 0_{d\times d} & 0_{d\times d} & I_{d\times d}/2 & I_{d\times d}/4 & -I_{d\times d}/4 & 0_{d\times d} & 0_{d\times d}\\
			\hline
			0_{d\times d} & 0_{d\times d} & 0_{d\times d} & 0_{d\times d} & I_{d\times d}/2 & I_{d\times d}/2 & -I_{d\times d}/2 & 0_{d\times d}\\
			-I_{d\times d}/2 & I_{d\times d}/2 & 0_{d\times d} & 0_{d\times d} & 0_{d\times d} & 0_{d\times d} & 0_{d\times d} & -I_{d\times d}/2\\
			\hline
			0_{d\times d} & 0_{d\times d} & 0_{d\times d} & 0_{d\times d} & I_{d\times d} & I_{d\times d} & I_{d\times d} & 0_{d\times d}\\
			-I_{d\times d} & I_{d\times d} & 0_{d\times d} & 0_{d\times d} & 0_{d\times d} & 0_{d\times d} & 0_{d\times d} & I_{d\times d}\\
			\hline
			-I_{d\times d}/2 & -I_{d\times d}/2 & I_{d\times d} & 0_{d\times d} & 0_{d\times d} & 0_{d\times d} & 0_{d\times d} & 0_{d\times d}\\
			0_{d\times d} & 0_{d\times d} & 0_{d\times d} & I_{d\times d} & -I_{d\times d}/2 & I_{d\times d}/2 & 0_{d\times d} & 0_{d\times d},
		\end{array}\right),
	\]
	where $\cA$ can be easily computed using \eqref{homomorph} and Theorem \ref{lift}. $W_\cA$ is shift-invertible with 
	\[
		E_\cA = \frac{1}{2}A_{1/2},
	\]
	where $A_{1/2}$ is defined as in \eqref{defA12}. By Theorem \ref{thm26} and Corollary \ref{cor29},
	\begin{align*}
	\norm{k}_{M^p_{m}}&\asymp	\norm{W_\cA(k_T\otimes \bar k_T, \Phi)}_{L^{p}_{ m}}\asymp\norm{k_T\otimes \bar k_T}_{M^{p}_{ m}}\leq\norm{k_T}^2_{M^p_{ m
	}}.
	\end{align*}
	On the other hand, for every $g\in \cS(\bR^{2d})$,
	\begin{align*}
		|\la k_T,g\ra|^2&=\la k_T,g\ra\overline{\la k_T,g\ra}=\la k_T\otimes \bar k_T,g\otimes\bar g\ra=\la Wk_T,Wg\ra=\la k,Wg\ra.
	\end{align*}
	Now, for  $k_I\in M^p_m$, using Lemma \ref{lemmaE0} (observe that $ p\geq 2$)
	\begin{align*}
		\norm{k_T}_{M^p_{m}}^2&=\Big(\sup_{g\in \cS(\rdd),\, \|g\|_{M^{p'}_{1/m}}=1}|\la k_T,g\ra|\Big)^2=\sup_{g\in \cS(\rdd),\,  \|g\|_{M^{p'}_{1/m}}=1}\la k,Wg\ra\\
		&\lesssim\sup_{G\in \cS(\bR^{4d}), \, \|G\|_{M^{p'}_{1/m}}=1}|\la k,G\ra|=\norm{k}_{M^p_{m}}.
	\end{align*}
	Consequently the estimate \eqref{th23bis} follows.
\end{proof}

For future works, we rewrite the above result for the special cases of Sobolev spaces $H^s$ (see, e.g., \cite{triebel2010theory}) and Shubin-Sobolev spaces $\Qs$ \cite{shubin}. 
\begin{corollary}
	Under the assumptions of the previous proposition,
	\begin{equation}\label{Sobolev-Shubin}
		\norm{k}_{\Qs}\asymp\norm{k_T}^2_{\Qs}\quad\mbox\quad \norm{k}_{H^s}\asymp\norm{k_T}^2_{H^s}. 
	\end{equation}
	\end{corollary}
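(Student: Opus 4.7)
The plan is to recognize both $H^s$ and $Q_s$ as specific modulation spaces, so that the corollary reduces to an immediate application of Proposition \ref{prop34} with $p=2$.

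First I would invoke the well-known identifications (see, e.g., \cite{Elena-book} for the modulation-space characterizations and the references \cite{shubin,triebel2010theory} cited in the statement):
\begin{equation*}
    H^s(\bR^n) = M^{2}_{1\otimes v_s}(\bR^n) \qquad \text{and} \qquad Q_s(\bR^n) = M^{2}_{v_s}(\bR^n),
\end{equation*}
with equivalence of norms, for every $s\geq 0$ and every dimension $n$. In particular, applying this for $n=2d$ (the Schwartz kernel) and for $n=4d$ (the Wigner kernel), the scalar quantities $\norm{k_T}_{H^s}$, $\norm{k_T}_{Q_s}$, $\norm{k}_{H^s}$, $\norm{k}_{Q_s}$ are respectively equivalent to $\norm{k_T}_{M^{2}_{1\otimes v_s}}$, $\norm{k_T}_{M^{2}_{v_s}}$, $\norm{k}_{M^{2}_{1\otimes v_s}}$, $\norm{k}_{M^{2}_{v_s}}$.

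Second, I would apply Proposition \ref{prop34} with the choice $p=2$, first taking $m=v_s$ (on $\rdd$ and on $\bR^{4d}$ respectively) to obtain
\begin{equation*}
    \norm{k}_{Q_s} \asymp \norm{k}_{M^{2}_{v_s}} \asymp \norm{k_T}^{2}_{M^{2}_{v_s}} \asymp \norm{k_T}^{2}_{Q_s},
\end{equation*}
and then taking $m=1\otimes v_s$ to get the analogous chain
\begin{equation*}
    \norm{k}_{H^s} \asymp \norm{k}_{M^{2}_{1\otimes v_s}} \asymp \norm{k_T}^{2}_{M^{2}_{1\otimes v_s}} \asymp \norm{k_T}^{2}_{H^s}.
\end{equation*}

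There is essentially no obstacle: the only non-trivial input is the identification of the classical Sobolev and Shubin--Sobolev spaces as $L^2$-type modulation spaces with the corresponding polynomial weights, which is standard. Everything else is a direct specialization of Proposition \ref{prop34}, observing that both weight choices $v_s$ and $1\otimes v_s$ are already covered by that proposition's hypotheses.
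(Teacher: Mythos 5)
Your proposal is correct and is essentially the paper's own argument: the paper's proof simply applies Proposition~\ref{prop34} with $p=2$, taking $m=v_s$ for the Shubin--Sobolev case and $m=1\otimes v_s$ for the Sobolev case, implicitly using exactly the identifications $Q_s=M^2_{v_s}$ and $H^s=M^2_{1\otimes v_s}$ that you spelled out. The only difference is that you make the modulation-space characterizations explicit, which the paper leaves to the cited references.
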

\begin{proof}
	The equivalences follow from \eqref{th23bis} by choosing $p=2$ and either $m=v_s$ for the left-hand side of \eqref{Sobolev-Shubin} or $m=1\otimes v_s$ for the right-hand one.
\end{proof}

\section{The class $FIO(S,M^{\infty,q}_{1\otimes v_s})$}\label{sec:4}
In this section we shall study  the above-mentioned class.
	The first question we address is which metaplectic Wigner quantization can replace the Rihacek distribution in Definition \ref{defFIOclass}.
	
	Let us recall that upper block triangular symplectic matrices play a special role in the analysis on modulation spaces, since the associated metaplectic operators preserve $M^{p,q}_{m}$ for every $p,q$, as outlined in Proposition \ref{prop25}. 
	
	\begin{remark}\label{rem41}
		A matrix $U\in\mbox{Sp}(d,\bR)$ is upper block triangular if and only if $U=\mathcal{D}_LV_C^T$ for some $L\in\mbox{GL}(d,\bR)$ and $C\in\mbox{Sp}(d,\bR)$, where $\mathcal{D}_L$ and $V_C$ are defined as in \eqref{VCDL}. In particular, $\hat U\in \mbox{Mp}(d,\bR)$ has upper block triangular projection $U$ if and only if $\hat U=\mathfrak{T}_L\psi_C$, up to a sign, for some $L\in\mbox{GL}(d,\bR)$ and $C\in\mbox{Sp}(d,\bR)$, where $\mathfrak{T}_L$ and $\psi_C$ are defined as in Example \ref{ex23}.
	\end{remark}
	
	
	\begin{theorem}\label{thm41}
		Consider $h\in\cS'(\bR^{4d})$ and $W_\cA$  a metaplectic Wigner distribution satisfying
		 \begin{equation}\label{admissibleQuantiz}
		 	W_\cA(f,g)(z)=|\det L|^{1/2}\cF^{-1}\Phi_{-C}\ast W_0(f,g)(Lz), \qquad f,g\in\cS'(\rd),
		\end{equation}
		for some $C\in\mbox{Sym}(2d,\bR)$ and $L\in\mbox{GL}(2d,\bR)$. For $0<q\leq1$ and $s\geq0$, the following statements are equivalent: \\
		(i) $h$ is the Schwartz kernel of the Kohn-Nirenberg quantization of a pseudodifferential operator $\sigma(z,D)$, with $\sigma\in M^{\infty,q}_{1\otimes v_s}(\bR^{4d})$.\\
		$(ii)$ $h$ is the Schwartz kernel of the metaplectic pseudodifferential operator $Op_\cA(\sigma_\cA)$, with $\sigma_\cA\in M^{\infty,q}_{1\otimes v_s}(\bR^{4d})$.
	\end{theorem}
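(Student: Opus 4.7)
The plan is to recast assumption \eqref{admissibleQuantiz} as an identity between metaplectic operators, so that the transition from the Kohn--Nirenberg quantization to $Op_\cA$ becomes multiplication by a single, explicitly-identified metaplectic operator $\hat U$ whose symplectic projection is upper block triangular. Once this is established, the equivalence (i)$\Leftrightarrow$(ii) reduces to continuity of $\hat U$ on $M^{\infty,q}_{1\otimes v_s}(\bR^{4d})$, which is a direct application of Proposition~\ref{prop25}(ii) combined with Theorem~\ref{changeOfQuant}.

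First, I would rewrite \eqref{admissibleQuantiz} as an operator identity. Using $W_0 = \hat A_0(f\otimes\bar g)$ from Example~\ref{exW0}, and identifying $\cF^{-1}\Phi_{-C}\ast = \psi_C$ (Example~\ref{ex23}(iiib)) together with $|\det L|^{1/2}F(L\cdot) = \mathfrak{T}_L F$ from \eqref{LchVar}, the hypothesis becomes $\hat\cA(f\otimes\bar g) = \mathfrak{T}_L\psi_C\hat A_0(f\otimes\bar g)$ for all $f,g\in\cS'(\rd)$. By density of $\mbox{span}\{f\otimes\bar g : f,g\in\cS'(\rd)\}$ in $\cS'(\rdd)$, this yields
\[
\hat U := \hat\cA\,\hat A_0^{-1} = \mathfrak{T}_L\psi_C \in \mbox{Mp}(2d,\bR),
\]
up to a sign. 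By Remark~\ref{rem41}, the symplectic projection $U = \mathcal{D}_L V_C^T \in \mbox{Sp}(2d,\bR)$ is upper block triangular.

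Second, the change-of-quantization Theorem~\ref{changeOfQuant} applied to the pair $Op_{A_0}$ (Kohn--Nirenberg, since $h = \hat A_0^{-1}\sigma$ by \eqref{genkersymb}) and $Op_\cA$ says that the Schwartz kernel $h$ comes from $\sigma$ in the former and from $\sigma_\cA$ in the latter if and only if $\sigma_\cA = \hat U\sigma$. Thus statements (i) and (ii) become the assertions $\sigma\in M^{\infty,q}_{1\otimes v_s}(\bR^{4d})$ and $\hat U\sigma\in M^{\infty,q}_{1\otimes v_s}(\bR^{4d})$, respectively, and the equivalence reduces to showing that $\hat U$ is a homeomorphism of $M^{\infty,q}_{1\otimes v_s}(\bR^{4d})$.

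Finally, I would apply Proposition~\ref{prop25}(ii). The only point requiring a check is the weight compatibility $m\circ U\asymp m$ for $m=1\otimes v_s$: writing coordinates $(w,\zeta)\in\bR^{4d}\times\bR^{4d}$, one computes
\[
U(w,\zeta) = (L^{-1}w + L^{-1}C\zeta,\, L^T\zeta),
\]
so $(1\otimes v_s)(U(w,\zeta)) = v_s(L^T\zeta) \asymp v_s(\zeta)$, because $L^T\in\mbox{GL}(2d,\bR)$. Hence $\hat U$ is a homeomorphism of $M^{\infty,q}_{1\otimes v_s}(\bR^{4d})$ for every $0<q\leq 1$, proving (i)$\Leftrightarrow$(ii). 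The only mild obstacle is precisely this weight-invariance check; it works because the admissibility condition has been tailored to produce an upper block triangular projection, which acts on the frequency variables of the phase space by an invertible linear map and therefore preserves the polynomial weight $v_s$ up to equivalence.
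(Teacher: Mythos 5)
Your proof is correct and follows the same route as the paper: identify $\hat U=\hat\cA\hat A_0^{-1}=\mathfrak{T}_L\psi_C$ from \eqref{admissibleQuantiz}, use the change-of-quantization relation $\sigma_\cA=\hat U\sigma$, and invoke Proposition~\ref{prop25}(ii) for the upper block triangular $U$. In fact your version is slightly more careful than the paper's, which cites Proposition~\ref{prop25} where your citation of Theorem~\ref{changeOfQuant} is the apt one, and which leaves implicit the weight-compatibility check $(1\otimes v_s)\circ U\asymp 1\otimes v_s$ that you verify explicitly.
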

	\begin{proof}
	Obviously, $\cA A_0^{-1}=U$, with $U$ an upper block triangular if and only if $\cA=UA_0$. 	By Remark \ref{rem41}, $U=\mathcal{D}_LV_C^T$, for suitable $L\in\mbox{GL}(2d,\bR)$ and $C\in\mbox{Sym}(2d,\bR)$. 
	
	In terms of metaplectic Wigner distributions, \eqref{homomorph} entails that
	\[
		W_\cA(f,g)=\hat\cA(f\otimes \bar g)=\hat U\hat A_0 (f\otimes\bar g) =\mathfrak{T}_L\psi_C(W_0(f,g)), \qquad f,g\in \cS'(\rd),
	\]
	up to a sign, where $\psi_C$ and $\mathfrak{T}_L$ are defined as in Example \ref{ex23}. 
	
	Assume that $W_\cA$ satifies \eqref{admissibleQuantiz}. By Proposition \ref{prop25}, 
	\begin{equation}\label{relImp}
	\sigma_\cA=\hat\cA \hat A_0^{-1}\sigma = \hat U\sigma.
	\end{equation}
	Since, $\hat U\sigma\in M^{\infty,q}_{1\otimes v_s}(\bR^{4d})$ if and only if $\sigma\in M^{\infty,q}_{1\otimes v_s}(\bR^{4d})$, the assertion follows. 
	\end{proof}
	
	\begin{remark}
		In terms of matrix decompositions, $W_\cA$ satisfies \eqref{admissibleQuantiz} if and only the block decomposition \eqref{blocksA} of the projection $\cA$ satisfies:
		\begin{equation}\label{equivAdmissible}
			\begin{cases}
				A_{32}=-A_{31},\\
				A_{42} = -A_{41},\\
				A_{34}=A_{33},\\
				A_{44}=A_{43}.
			\end{cases}
		\end{equation}
		This follows by imposing the product $\cA A_0^{-1}$ to be upper triangular. 
	\end{remark}
	
	The matrix $A_{1/2}$, as defined in \eqref{defA12} satisfies \eqref{equivAdmissible}. This  entails the following corollary.
	
	\begin{corollary}\label{cor44}
	 Let $T:\cS(\rd)\to\cS'(\rd)$ be linear and continuous, 	 $0<q\leq1$, and $s\geq0$. The following are equivalent:\\
		(i) $T\in FIO(S,M^{\infty,q}_{1\otimes v_s})$.\\
		(ii) The Wigner kernel $k$ of $T$ satisfies $k(z,w)=h(z,Sw)$, where $h$ is the Schwartz kernel of a pseudodifferential operator $a^w(z,D)$ with $a\in M^{\infty,q}_{1\otimes v_s}(\bR^{4d})$.
	\end{corollary}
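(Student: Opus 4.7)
The plan is to reduce Corollary~\ref{cor44} to Theorem~\ref{thm41} by identifying the Weyl quantization $a^w(z,D)$ on $\mathbb{R}^{2d}$ with the metaplectic pseudodifferential operator $Op_{A_{1/2}}(a)$ associated to the matrix $A_{1/2}$ of \eqref{defA12} (with $d$ replaced by $2d$, since the operators in question act on $\mathbb{R}^{2d}$). Once this is done, the two formulations of the Wigner kernel factorization $k(z,w)=h(z,Sw)$ in Definition~\ref{defFIOclass} and in $(ii)$ of Corollary~\ref{cor44} differ only by a change of quantization on the pseudodifferential operator whose Schwartz kernel is $h$, and Theorem~\ref{thm41} precisely ensures that this change is admissible within $M^{\infty,q}_{1\otimes v_s}$.

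The first step is to verify the hypothesis of Theorem~\ref{thm41} for $\cA=A_{1/2}$. By the remark preceding Corollary~\ref{cor44}, one only needs to check that the $d\times d$ blocks of $A_{1/2}$ satisfy the four block identities \eqref{equivAdmissible}, which is immediate from the explicit form of $A_{1/2}$ in \eqref{defA12}: the third and fourth block-rows read $(0,\ 0,\ I,\ I)$ and $(-I,\ I,\ 0,\ 0)$, so $A_{32}=-A_{31}=0$, $A_{42}=-A_{41}=I$, $A_{34}=A_{33}=I$, $A_{44}=A_{43}=0$. Equivalently, $A_{1/2}A_0^{-1}$ is upper block triangular, so by Remark~\ref{rem41} and the identity $W(f,g)=\mathfrak{T}_L\psi_C W_0(f,g)$ (valid up to a sign for suitable $L$ and $C$) the Wigner distribution satisfies \eqref{admissibleQuantiz}. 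Thus Theorem~\ref{thm41} applies with $\cA=A_{1/2}$.

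The second step is to assemble the equivalence. By Theorem~\ref{thm41}, a distribution $h\in\cS'(\mathbb{R}^{4d})$ is the Schwartz kernel of a Kohn-Nirenberg pseudodifferential operator $\sigma(z,D)$ with $\sigma\in M^{\infty,q}_{1\otimes v_s}(\mathbb{R}^{4d})$ if and only if it is the Schwartz kernel of the Weyl-quantized operator $Op_{A_{1/2}}(a)=a^w(z,D)$ with $a\in M^{\infty,q}_{1\otimes v_s}(\mathbb{R}^{4d})$, the translation being given at the symbol level by $a=\widehat{A_{1/2}}\widehat{A_0}^{-1}\sigma$, which preserves $M^{\infty,q}_{1\otimes v_s}$ by Proposition~\ref{prop25} (the associated symplectic matrix being upper block triangular). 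Substituting this equivalence into the factorization $k(z,w)=h(z,Sw)$ in Definition~\ref{defFIOclass} directly yields $(i)\Leftrightarrow(ii)$.

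The proof is essentially a bookkeeping reduction, and I do not expect a genuine obstacle: no new estimate beyond Theorem~\ref{thm41} is required, and the only subtlety is to apply that theorem in dimension $2d$ (so that all symplectic matrices are $8d\times 8d$ and the symbol classes live on $\mathbb{R}^{4d}$), together with keeping straight the identification of $a^w(z,D)$ with the metaplectic quantization associated to $A_{1/2}$.
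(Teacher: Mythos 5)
Your proof is correct and takes essentially the same route as the paper: the paper's entire argument consists of observing that the block entries of $A_{1/2}$ satisfy the four identities in \eqref{equivAdmissible} (equivalently, that $A_{1/2}A_0^{-1}$ is upper block triangular), after which Corollary~\ref{cor44} is an immediate consequence of Theorem~\ref{thm41} and the identification $a^w(z,D)=Op_{A_{1/2}}(a)$. Your verification of the block identities is accurate, and you are right that the only bookkeeping to keep track of is that everything here is applied in dimension $2d$, so $A_{1/2}$ is the $8d\times 8d$ matrix and the symbols live on $\mathbb{R}^{4d}$.
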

	
	The following issue proves Theorem \ref{tc1old} $(i)$.
	\begin{theorem}
	 If $T\in FIO(S,M^{\infty,q}_{1\otimes v_s})$, $0<q\leq1$, $s\geq0$, and $S\in\mbox{Sp}(d,\bR)$, then $T\in B(L^2(\rd))$.
	\end{theorem}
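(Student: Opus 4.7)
The plan is to reduce the $L^2$-boundedness of $T$ on $\rd$ to the already-known $L^2$-boundedness of a Kohn-Nirenberg pseudodifferential operator on $\rdd$, using the Wigner kernel as the bridge and Moyal's identity to transport norms between the two levels.

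First, I would invoke Definition \ref{defFIOclass} to write the Wigner kernel of $T$ as $k(z,w)=h(z,Sw)$, where $h$ is the Schwartz kernel of a Kohn-Nirenberg operator $\sigma(z,D)$ on $\rdd$ with symbol $\sigma\in M^{\infty,q}_{1\otimes v_s}(\bR^{4d})$. Then, for $F\in\cS(\rdd)$, the substitution $u=Sw$ — licit because any $S\in\mbox{Sp}(d,\bR)$ has $|\det S|=1$ — yields
\[
KF(z)=\intrdd h(z,Sw)F(w)\,dw=\intrdd h(z,u)F(S^{-1}u)\,du=\sigma(z,D)(F\circ S^{-1})(z).
\]
Thus $K$ factors as $K=\sigma(z,D)\circ\mathfrak{T}_{S^{-1}}$, where $\mathfrak{T}_{S^{-1}}F=F\circ S^{-1}$ is an isometry of $L^2(\rdd)$.

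Next I would invoke Theorem \ref{thmquasi}(i): since $0<q\leq 1$ and $s\geq 0$, the hypothesis $\sigma\in M^{\infty,q}_{1\otimes v_s}(\bR^{4d})$ places $\sigma(z,D)$ in a Sj\"ostrand-type class whose Kohn-Nirenberg operators are bounded on $L^2(\rdd)$. Hence $K\in B(L^2(\rdd))$, with $\norm{K}_{op}\leq\norm{\sigma(z,D)}_{op}$. For $f\in\cS(\rd)$, combining the defining relation $W(Tf)=K(Wf)$ (see \eqref{WignersWkernel}) with Moyal's identity (Proposition \ref{propcont}(i)) chains together as
\[
\norm{Tf}_{L^2(\rd)}^2=\norm{W(Tf)}_{L^2(\rdd)}=\norm{K(Wf)}_{L^2(\rdd)}\leq\norm{K}_{op}\norm{Wf}_{L^2(\rdd)}=\norm{K}_{op}\norm{f}_{L^2(\rd)}^2,
\]
and the density of $\cS(\rd)$ in $L^2(\rd)$ extends $T$ to an element of $B(L^2(\rd))$.

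I do not anticipate a significant obstacle: the three main ingredients — the kernel factorization from Definition \ref{defFIOclass}, the $L^2$-boundedness from Theorem \ref{thmquasi}(i), and Moyal's identity from Proposition \ref{propcont} — are already available. The only delicate point is the observation that symplectic matrices have unit determinant, which is precisely what turns the change of variables $F\mapsto F\circ S^{-1}$ into an isometry on $L^2(\rdd)$ rather than merely a topological isomorphism, so that the operator norm inequalities pass through cleanly at the level of $\cS(\rd)$ before extending by density.
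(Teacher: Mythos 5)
Your proposal is correct and follows essentially the same route as the paper's own proof: reduce to the $L^2(\rdd)$-boundedness of $K$ via the kernel factorization $k(z,w)=h(z,Sw)$, invoke Theorem~\ref{thmquasi}(i) for the pseudodifferential piece, note that precomposition with $S^{-1}$ is an $L^2$-isometry since $|\det S|=1$, and transfer back to $L^2(\rd)$ through Moyal's identity. The only cosmetic difference is that you exhibit the factorization $K=\sigma(z,D)\circ\mathfrak{T}_{S^{-1}}$ directly by a change of variables, whereas the paper reaches the same conclusion through the duality pairing $\la Kf,g\ra=\la H\mathfrak{T}_{S^{-1}}f,g\ra$.
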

	\begin{proof}
		Let $k$ be the Wigner kernel of $T$ and $K$  the operator with Schwartz kernel $k(z,w)=h(z,Sw)$. Since $T\in FIO(S,M^{\infty,q}_{1\otimes v_s})$, $h$ is the kernel of a pseudodifferential operator $\sigma(z,D)$ with symbol in $\sigma\in M^{\infty,q}_{1\otimes v_s}(\bR^{4d})$. By item $(i)$ of Theorem \ref{thmquasi}, $a^w(z,D)$ is bounded on $L^2(\rd)$. This implies that $K$ is bounded on $L^2(\rd)$ as well. In fact, if $H$ denotes the operator with kernel $h$ and $\mathfrak{T}_Lh(z,w):=h(z,Sw)$
		\begin{align*}
			\la Kf,g\ra &= \la k,g\otimes \bar f\ra			=\la \mathfrak{T}_Lh,g\otimes \bar f\ra
			=\la h,\mathfrak{T}_L^{-1}(g\otimes \bar f)\ra \\
			&= \la h,g\otimes\mathfrak{T}_{S^{-1}}\bar f\ra
			=\la h,g\otimes\overline{\mathfrak{T}_{S^{-1}}f}\ra
			=\la H\mathfrak{T}_{S^{-1}}f,g\ra,
		\end{align*}
		for every $f,g\in \cS(\rdd)$, so that 
		\[
			|\la Kf,g\ra|=|\la H\mathfrak{T}_{S^{-1}}f,g\ra|\leq\norm{H}_{op}\norm{\mathfrak{T}_{S^{-1}}f}_2\norm{g}_2=\norm{H}_{op}\norm{f}_2\norm{g}_2,
		\]
		for every $f,g\in \cS(\rdd)$, which gives $\norm{K}_{op}\leq \norm{H}_{op}$. The same argument, with the roles of $K$ and $H$ reversed, gives:
		\[
			\norm{K}_{op}=\norm{H}_{op}.
		\]
		For every $f\in\cS(\rd)$, by Moyal's identity \eqref{moyal} applied to $\cA=A_{1/2}$ and \eqref{rmrk32},
		\[
			\norm{Tf}_2^2=\norm{W(Tf)}_2=\norm{K(Wf)}_2\leq\norm{K}_{op}\norm{Wf}_2=\norm{K}_{op}\norm{f}_2^2.
		\]
	In conclusion, $\norm{T}_{op}\leq \norm{K}_{op}^{1/2}$.
	\end{proof}

	In what follows we showcase the algebra property (Theorem \ref{tc1old} $(ii)$).
	\begin{theorem}\label{thm46}
		Consider $0<q\leq1$, $s\geq0$, and $S_1,S_2\in\mbox{Sp}(d,\bR)$. If $T_1\in FIO(S_1,M^{\infty,q}_{1\otimes v_s})$ and $T_2\in FIO(S_2,M^{\infty,q}_{1\otimes v_s})$ then $T_1T_2\in FIO(S_1S_2,M^{\infty,q}_{1\otimes v_s})$.
	\end{theorem}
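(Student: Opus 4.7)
The plan is to translate the composition $T_1T_2$ into the composition $K_1K_2$ of the associated phase-space operators, and then exploit the Sj\"ostrand-type algebra property for pseudodifferential operators on $\bR^{2d}$ (Theorem \ref{thmquasi}).

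By Corollary \ref{cor44} I write $k_i(z,w)=h_i(z,S_iw)$, where $h_i$ is the Schwartz kernel of a Kohn-Nirenberg operator $H_i=\sigma_i(z,D)$ on $\bR^{2d}$ with symbol $\sigma_i\in M^{\infty,q}_{1\otimes v_s}(\bR^{4d})$. By Theorem \ref{3.3}, together with the identity $K_1K_2W(f,g)=K_1W(T_2f,T_2g)=W(T_1T_2f,T_1T_2g)$ and the density of $\mbox{span}\{W(f,g)\}$ in $\cS(\rdd)$, the Wigner kernel of $T_1T_2$ coincides with the Schwartz kernel of $K_1K_2$. A direct change of variable $u=S_iw$ (using $|\det S_i|=1$) shows $K_i=H_i\,\mathfrak{T}_{S_i^{-1}}$. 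Combined with $\mathfrak{T}_{S_1^{-1}}\mathfrak{T}_{S_2^{-1}}=\mathfrak{T}_{(S_1S_2)^{-1}}$ and the insertion $\mathfrak{T}_{S_1}\mathfrak{T}_{S_1^{-1}}=\Id$, I obtain
\[ K_1K_2=H_1\bigl(\mathfrak{T}_{S_1^{-1}}H_2\mathfrak{T}_{S_1}\bigr)\mathfrak{T}_{(S_1S_2)^{-1}}. \]

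The crux is to show that $\widetilde H_2:=\mathfrak{T}_{S_1^{-1}}H_2\mathfrak{T}_{S_1}$ is again a Kohn-Nirenberg pseudodifferential operator with symbol in $M^{\infty,q}_{1\otimes v_s}(\bR^{4d})$. A direct calculation with the Kohn-Nirenberg integral gives $\widetilde H_2=\widetilde\sigma_2(z,D)$, where $\widetilde\sigma_2(z,\zeta)=\sigma_2(S_1^{-1}z,S_1^T\zeta)=\sigma_2\circ\mathcal{D}_{S_1}(z,\zeta)$. The substitution $\sigma_2\mapsto\sigma_2\circ\mathcal{D}_{S_1}$ is, up to a normalizing constant, the metaplectic dilation $\mathfrak{T}_{\mathcal{D}_{S_1}}$ acting on functions on $\bR^{4d}$, whose symplectic projection is block diagonal; the polynomial weight $1\otimes v_s$ is preserved up to equivalence by the induced linear action on the time-frequency plane. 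Proposition \ref{prop25} then gives $\widetilde\sigma_2\in M^{\infty,q}_{1\otimes v_s}(\bR^{4d})$. Applying the algebra property Theorem \ref{thmquasi}(ii) to $H_1$ and $\widetilde H_2$ yields $H_1\widetilde H_2=\rho(z,D)=:H_{12}$ with $\rho\in M^{\infty,q}_{1\otimes v_s}(\bR^{4d})$.

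Undoing the change of variables, the Schwartz kernel of $K_1K_2=H_{12}\mathfrak{T}_{(S_1S_2)^{-1}}$ equals $h_{12}(z,S_1S_2w)$, where $h_{12}$ is the kernel of $H_{12}$. By Corollary \ref{cor44} this is exactly the required form, hence $T_1T_2\in FIO(S_1S_2,M^{\infty,q}_{1\otimes v_s})$. The main obstacle is the symplectic-covariance step: showing that conjugation by the dilation $\mathfrak{T}_{S_1^{-1}}$ preserves both the Kohn-Nirenberg form and the rough Sj\"ostrand-type symbol class. The first assertion is a direct symbol calculation; the second reduces to Proposition \ref{prop25} applied to the block-diagonal metaplectic operator $\mathfrak{T}_{\mathcal{D}_{S_1}}$ on phase space, together with the weight compatibility $v_s\circ S_1^T\asymp v_s$.
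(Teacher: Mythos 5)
Your proof is correct and follows essentially the same route as the paper: both identify the Wigner kernel of $T_1T_2$ as the kernel of $K_1K_2$, conjugate the inner factor by $S_1$ to produce the transformed symbol $\tilde\sigma_2(z,\zeta)=\sigma_2(S_1^{-1}z,S_1^T\zeta)$, verify $\tilde\sigma_2\in M^{\infty,q}_{1\otimes v_s}$ via the block-diagonal metaplectic dilation and Proposition \ref{prop25}, and conclude with the algebra property Theorem \ref{thmquasi}$(ii)$. You phrase the argument at the operator level ($K_i=H_i\mathfrak{T}_{S_i^{-1}}$ and conjugation) while the paper carries out the equivalent change of variables directly in the kernel integral, but the substance and the key symbol transformation are identical.
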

	\begin{proof}
	It follows the pattern of Theorem 4.2 in \cite{CRGFIO1}. Let $k_j$ be the Wigner kernel of $T_j$ ($j=1,2$). A simple computation shows that the Wigner kernel of $T_1T_2$ is
	\begin{equation}
		k(z,w)=\int_{\rdd}k_1(z,u)k_2(u,w)du.
	\end{equation}
	Writing $k_j(z,w)=h_j(z,S_jw)$ ($j=1,2$) and using the change of variables $S_1u=u'$, 
	\begin{align*}
		k(z,w)&=\int_{\rdd}h_1(z,S_1u)h_2(u,S_2w)du=
		\int_{\rdd}h_1(z,S_1u)\cF_2^{-1}\sigma_2(u,S_2w)du\\
		&=\int_{\rdd}h_1(z,u)\cF_2^{-1}\sigma_2(S_1^{-1}u,S_1^{-1}u-S_2w)du\\
		&=\int_{\rdd}h_1(z,u)\mathfrak{T}_{M}\cF_2^{-1}\sigma_2(u,u-S_1S_2w)du\\
		&=\int_{\rdd}h_1(z,u)\cF_2^{-1}\tilde\sigma_2(u,u-S_1S_2w)du
		=\int_{\rdd}h_1(z,u)\tilde h_2(u,S_1S_2w)du,
	\end{align*}
	where $\tilde\sigma_2(u,v)=\sigma_2(S_1^{-1}u,S_1^Tv)$ and $\tilde h_2(z,w)=\cF_2^{-1}\tilde\sigma_2(z,z-w)$. By item $(i)$ of Proposition \ref{prop25} the  symbol $\tilde\sigma_2$ is in $M^{\infty,q}_{1\otimes v_s}(\bR^{4d})$. Consequently,
	\[
		k(z,w)=\int_{\rdd}k_1(z,u)\tilde k_2(u,w)du, 
	\]
	where $\tilde k_2(z,w)=\tilde h_2(z,S_1S_2w)$. The distributions $h_1$ and $\tilde h_2$ are the Schwartz kernels of the pseudodifferential operators $\sigma_1(z,D)$ and $\tilde\sigma_2(z,D)$, with $\sigma_1,\tilde\sigma_2\in M^{\infty,q}_{1\otimes v_s}(\bR^{4d})$, whereas
	\[
		h(z,w):=\int_{\rdd}h_1(z,u)\tilde h_2(u,w)du
	\]
	gives the Schwartz kernel of their composition, by \eqref{Skercomp}. By item $(ii)$ of Theorem \ref{thmquasi} and Corollary \ref{cor44}, $h$ is the Schwarz kernel of a pseudodifferential operator $\sigma(z,D)$ with symbol $\sigma\in M^{\infty,q}_{1\otimes v_s}(\bR^{4d})$. Since $k(z,w)=h(z,S_1S_2w)$, the assertion is proved.
	\end{proof}

	If  $T\in FIO(S,M^{\infty,q}_{1\otimes v_s})$ its adjoint $T^*$ belongs to the class $FIO(S^{-1},M^{\infty,q}_{1\otimes v_s})$, as shown below.
	\begin{lemma}\label{lemmaTast}
If  $T\in FIO(S,M^{\infty,q}_{1\otimes v_s})$, $0<q\leq1$, $s\geq0$, $S\in\mbox{Sp}(d,\bR)$, then $T^{\ast}\in FIO(S^{-1},M^{\infty,q}_{1\otimes v_s})$.
	\end{lemma}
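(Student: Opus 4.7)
The plan is to derive the Wigner kernel of $T^\ast$ from that of $T$ and recognize the resulting expression as a pseudodifferential kernel localized along the manifold $z = S^{-1}w$ with a symbol in $M^{\infty,q}_{1\otimes v_s}$.

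First, I would identify the Wigner kernel of $T^\ast$. Introducing the lifts $K, K' \colon \cS(\rdd) \to \cS'(\rdd)$ given by $KW(f,g) = W(Tf, Tg)$ and $K'W(f,g) = W(T^\ast f, T^\ast g)$, Moyal's identity \eqref{moyal} gives
\[
\langle K'W(f,g), W(u,v)\rangle = \langle f,Tu\rangle\overline{\langle g,Tv\rangle} = \langle W(f,g), KW(u,v)\rangle,
\]
which by density of $\mbox{span}\{W(u,v):u,v\in\cS(\rd)\}$ in $\cS(\rdd)$ yields $K' = K^\ast$. Consequently the Wigner kernel of $T^\ast$ is the adjoint kernel $k_{T^\ast}(z,w) = \overline{k_T(w,z)}$. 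Writing $k_T(z,w) = h(z, Sw)$ as in Definition \ref{defFIOclass} and using $\det S = 1$ to substitute $w \mapsto Sw$, one rearranges
\[
k_{T^\ast}(z,w) = \overline{h(w, Sz)} = \tilde h(z, S^{-1}w), \quad \tilde h(z,w) := \overline{h(Sw, Sz)}.
\]

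Next, I would check that $\tilde h$ is the Schwartz kernel of a pseudodifferential operator on $\rdd$ with Weyl symbol in $M^{\infty,q}_{1\otimes v_s}(\bR^{4d})$, so that Corollary \ref{cor44} yields $T^\ast \in FIO(S^{-1}, M^{\infty,q}_{1\otimes v_s})$. A routine change of variables shows that $\tilde h$ is the Schwartz kernel of the composition $\tilde H := \mathfrak{T}_S H^\ast \mathfrak{T}_S^{-1}$, where $H$ is the operator with Schwartz kernel $h$ and $\mathfrak{T}_S \in \mbox{Mp}(2d, \bR)$ is the change-of-variables metaplectic operator of Example \ref{ex23}(ii). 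By Corollary \ref{cor44} I may assume $H = a^w(z, D)$ with Weyl symbol $a \in M^{\infty,q}_{1\otimes v_s}(\bR^{4d})$. The adjoint identity $(a^w)^\ast = \bar a^w$ together with the symplectic covariance of the Weyl calculus (a direct consequence of \eqref{defMp}) then gives
\[
\tilde H = (\bar a \circ \mathcal{D}_S^{-1})^w(z, D).
\]
The operator $\mathfrak{T}_{\mathcal{D}_S^{-1}} \in \mbox{Mp}(4d, \bR)$ has block-diagonal symplectic projection and preserves the weight $1 \otimes v_s$ up to equivalence (since $v_s\circ L \asymp v_s$ for every invertible $L$), so Proposition \ref{prop25}(ii), combined with the invariance of $M^{\infty,q}_{1\otimes v_s}$ under complex conjugation, yields $\bar a \circ \mathcal{D}_S^{-1} \in M^{\infty,q}_{1\otimes v_s}(\bR^{4d})$.

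The main obstacle I anticipate lies in this second step: one must carefully distinguish the several metaplectic layers involved, since the symbol $a$ lives on $\bR^{4d}$ while its STFT lives on $\bR^{8d}$, and one must apply the symplectic covariance of Weyl quantization to the metaplectic operator $\mathfrak{T}_S$ acting on $\bR^{2d}$. Once this bookkeeping of phase-space dimensions, block-structure, and weight-equivalence is settled, the argument reduces to a straightforward application of Corollary \ref{cor44} and Proposition \ref{prop25}(ii).
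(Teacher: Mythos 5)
Your proof is correct, and its second half takes a genuinely different route from the paper. After establishing that the Wigner kernel of $T^*$ is obtained by transposing (and conjugating) the Wigner kernel of $T$, the paper stays entirely within the Kohn--Nirenberg picture: it writes $\tilde h = \mathfrak{T}_L h = \mathfrak{T}_L\mathfrak{T}_{L_0}^{-1}\cF_2^{-1}\sigma$, commutes $\mathfrak{T}_L$ past $\mathfrak{T}_{L_0}^{-1}\cF_2^{-1}$ via Proposition \ref{intert} to produce an upper-block-triangular $\hat U$ acting on $\sigma$, and concludes by Proposition \ref{prop25}. You instead pass to the Weyl quantization via Corollary \ref{cor44} and then exploit the symplectic covariance of the Weyl calculus together with $(a^w)^*=\bar a^w$, so that the change of variables and the adjoint are absorbed simultaneously into a new Weyl symbol $\bar a\circ\mathcal D_S^{-1}$. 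Conceptually this is slicker: the ad hoc matrix manipulation with $L_0$ and $\cF_2$ becomes a single invocation of Weyl covariance. The trade-off is that you must keep track of the block-diagonal $8d\times 8d$ symplectic matrix underlying $\mathfrak{T}_{\mathcal D_S^{-1}}\in\mbox{Mp}(4d,\bR)$ and verify the weight equivalence $v_s\circ\mathcal D_S^{-T}\asymp v_s$, which you do.

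Two minor remarks. First, you write $k_{T^*}(z,w)=\overline{k(w,z)}$ whereas the paper (citing \cite[Theorem 4.3]{CRGFIO1}) has $\tilde k(z,w)=k(w,z)$ without conjugation; both are correct because the Wigner kernel $k=\mathfrak T_pWk_T$ is real-valued (the auto-Wigner distribution of any $k_T$ is real). You do not observe this, but it causes no harm: you carry the conjugate through and it is absorbed by $\bar a$, and $M^{\infty,q}_{1\otimes v_s}$ is closed under complex conjugation as you note. Second, your derivation of $K'=K^\ast$ via Moyal's identity is a clean alternative to the paper's citation, though to promote the weak identity on $\operatorname{span}\{W(u,v)\}$ to an operator identity one should either note that $K$ is bounded on $L^2(\rdd)$ (which holds here by Theorem \ref{tc1old}(i)) or argue distributionally with the formula $k_{K^\ast}(z,w)=\overline{k(w,z)}$ directly; you do end up working with the kernel, so this is fine.
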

	\begin{proof}
		A direct computation shows that the Wigner kernel of $T^\ast$ is $\tilde k(z,w)=k(w,z)$ (see \cite[Theorem 4.3]{CRGFIO1}). We have:
		\begin{align*}
			\tilde k(z,w)&=k(w,z)=h(w,Sz)=\tilde h(z,S^{-1}w),
		\end{align*}
		where, if
		\[
			L=\begin{pmatrix}
				0_{2d\times 2d} & S^{-1}\\
				S^{-1} & 0_{2d\times 2d}
			\end{pmatrix},
		\]
		 $\tilde h(z,w)=h(S^{-1}w,S^{-1}z)=\mathfrak{T}_Lh(z,w)$. It remains to prove that $\tilde h$ is the kernel of a pseudodifferential operator $\tilde\sigma(z,D)$ with symbol $\tilde\sigma\in M^{\infty,q}_{1\otimes v_s}(\bR^{4d})$. We have:
		 \begin{align*}
		 	\tilde h(z,w)=\mathfrak{T}_Lh(z,w)=\mathfrak{T}_L\mathfrak{T}_{L_0}^{-1}\cF_2^{-1}\sigma(z,w)=\mathfrak{T}_{L_0}^{-1}\mathfrak{T}_{L'}\cF_2^{-1}\sigma(z,w),
		 \end{align*}
		 with 
		 \[
		 	L'=L_0^{-1}LL_0=\begin{pmatrix}S^{-1} & -S^{-1}\\
			0_{2d\times 2d} & -S^{-1}\end{pmatrix}
		 \]
		 which is upper block triangular. Consequently, by Proposition \ref{intert}:
		 \[
		 	\tilde h(z,w)=\mathfrak{T}_{L_0}^{-1}\cF_2^{-1}\hat U \sigma (z,w),
		 \]
		 with $U=\pi^{Mp}(\hat U)$ upper block triangular. By Proposition \ref{prop25}, $\tilde\sigma=\hat U\sigma\in M^{\infty,q}_{1\otimes v_s}(\bR^{4d})$, and the Wigner kernel $\tilde h$ of $T^\ast$ is the Schwartz kernel of the pseudodifferential operator $\tilde\sigma(z,D)$, with $\tilde\sigma\in M^{\infty,q}_{1\otimes v_s}(\bR^{4d})$. Since $\tilde k(z,w)=\tilde h(z,S^{-1}w)$, it follows that $T^\ast\in FIO(S^{-1},M^{\infty,q}_{1\otimes v_s})$.
	\end{proof}

	To prove the Wiener property we will need the following issue, proved for the class $FIO(S)$ in Lemma $4.3$ of  \cite{CRGFIO1}, which is still valid for our more general classes.
	\begin{lemma}\label{elena} Assume $T\in FIO(S,M^{\infty,q}_{1\otimes v_s})$ invertible on $L^2(\rd)$, then the operator $K$ in \eqref{WignersWkernel} is invertible on $\lrdd$ with inverse $K^{-1}$ satisfying
		\begin{equation}\label{I3inv}
			K^{-1} W(f,g)=W(T^{-1}f,T^{-1}g), \quad f,g\in\cS(\rd).
		\end{equation}
		Namely, $K^{-1}$ is the operator with integral kernel given by the Wigner kernel of $T^{-1}$. 
	\end{lemma}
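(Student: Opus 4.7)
The plan is to factor the defining relation of $K$ through the tensor product decomposition $L^2(\rdd)\cong L^2(\rd)\otimes L^2(\rd)$. The unitary implementing this factorization is the metaplectic operator $\tilde\Phi:=\hat A_{1/2}=\cF_2\mathfrak{T}_w$, for which \eqref{MWW} reads
\[
W(f,g) = \tilde\Phi(f \otimes \bar g), \qquad f,g\in L^2(\rd).
\]
In particular $\tilde\Phi$ is unitary on $L^2(\rdd)$ and carries pure tensors $f\otimes \bar g$ to Wigner distributions $W(f,g)$.

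Given this, I would define the bounded operators on $L^2(\rdd)$
\[
\tilde K := \tilde\Phi\,(T\otimes \bar T)\,\tilde\Phi^{-1},\qquad L := \tilde\Phi\,(T^{-1}\otimes \overline{T^{-1}})\,\tilde\Phi^{-1},
\]
where $\bar S f:=\overline{S\bar f}$, so that $\|\bar S\|_{op}=\|S\|_{op}$ for any $S\in B(L^2(\rd))$. Boundedness of $T$ and $T^{-1}$ gives boundedness of the two tensor-product operators on $L^2(\rd)\otimes L^2(\rd)=L^2(\rdd)$, hence $\tilde K$ and $L$ are bounded on $L^2(\rdd)$. A direct computation on pure Wigner tensors yields
\[
\tilde K W(f,g)=\tilde\Phi(Tf\otimes \overline{Tg})=W(Tf,Tg)=KW(f,g),\qquad f,g\in\cS(\rd),
\]
and since $\mathrm{span}\{W(f,g):f,g\in\cS(\rd)\}$ is dense in $L^2(\rdd)$, $\tilde K$ is the unique bounded extension of $K$ to $L^2(\rdd)$. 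An identical computation gives $LW(f,g)=W(T^{-1}f,T^{-1}g)$ for $f,g\in\cS(\rd)$.

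To conclude, since $(T\otimes \bar T)(T^{-1}\otimes \overline{T^{-1}})=\Id\otimes \Id=\Id$ and vice versa on $L^2(\rd)\otimes L^2(\rd)$, conjugating by $\tilde\Phi$ yields $\tilde KL=L\tilde K=\Id$ on $L^2(\rdd)$. Hence $K$ is invertible on $L^2(\rdd)$ with $K^{-1}=L$, and the identification of $L$ on pure Wigner tensors gives \eqref{I3inv}. I do not anticipate a substantial obstacle: the whole argument reduces to the tensor identity $W(f,g)=\tilde\Phi(f\otimes \bar g)$, which is just a rereading of \eqref{MWW}, together with the standard fact that the tensor product of bounded $L^2$ operators is bounded with product norm. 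Notably, the proof makes no use of the Wiener property, which is exactly what we need to avoid circularity when \eqref{I3inv} is later invoked in the proof of Theorem \ref{tc1old}(iii).
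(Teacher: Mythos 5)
Your proof is correct. Note that the paper itself gives no argument here: it simply appeals to Lemma 4.3 of the companion paper \cite{CRGFIO1} and asserts that the statement remains valid for the more general symbol classes. Your argument supplies a clean, self-contained proof whose only inputs are the identity $W(f,g)=\hat A_{1/2}(f\otimes\bar g)$ from \eqref{MWW}, the unitarity of $\hat A_{1/2}$ on $L^2(\rdd)$, and the boundedness of $T$ and $T^{-1}$ on $L^2(\rd)$ (the former being guaranteed by Theorem \ref{tc1old}(i), the latter by hypothesis). Conjugating $T\otimes\bar T$ by $\hat A_{1/2}$ immediately identifies $K$ with a bounded invertible operator on $L^2(\rdd)$ whose inverse conjugates back to $T^{-1}\otimes\overline{T^{-1}}$, and the density of $\mathrm{span}\{W(f,g):f,g\in\cS(\rd)\}$ in $L^2(\rdd)$ (which follows from unitarity of $\hat A_{1/2}$ and density of tensors of Schwartz functions) pins down $K^{-1}$ on Wigner pairs and yields \eqref{I3inv}. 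This also makes explicit that the lemma only needs $T$ bounded and invertible on $L^2(\rd)$, not the full $FIO(S,M^{\infty,q}_{1\otimes v_s})$ hypothesis. Your observation that the argument avoids the Wiener property is correct and is indeed what keeps the proof of Theorem \ref{tc1old}(iii) non-circular, since that proof implicitly uses \eqref{I3inv} to conclude that $P^{-1}$ has Wigner operator $K_P^{-1}$.
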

	\begin{theorem}
		If  $T\in FIO(S,M^{\infty,q}_{1\otimes v_s})$, $0<q\leq1$, $s\geq0$, $S\in\mbox{Sp}(d,\bR)$, then $T^{-1}\in FIO(S^{-1},M^{\infty,q}_{1\otimes v_s})$.
	\end{theorem}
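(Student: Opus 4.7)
The plan is to reduce the problem to the Wiener property of pseudodifferential operators on $\rdd$, applied to the positive operator $T^\ast T$.

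First, I would combine Lemma \ref{lemmaTast} with the algebra property (Theorem \ref{thm46}) to obtain
\[
T^\ast T\in FIO(S^{-1}S,M^{\infty,q}_{1\otimes v_s})=FIO(\Id,M^{\infty,q}_{1\otimes v_s}).
\]
Since the underlying symplectic matrix is now the identity, Corollary \ref{cor44} identifies the Wigner kernel of $T^\ast T$ with the Schwartz kernel $h$ of a Weyl pseudodifferential operator $a^w(z,D)$ on $\rdd$, whose symbol $a$ belongs to $M^{\infty,q}_{1\otimes v_s}(\bR^{4d})$.

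Next, I would use that the invertibility of $T$ on $\lrd$ forces $T^\ast T$ to be invertible on $\lrd$ as well. By Lemma \ref{elena}, the associated operator $K$ on $\lrdd$, which here coincides with the integral operator with kernel $h$, is then invertible; equivalently, the Weyl pseudodifferential operator $a^w(z,D)$ is invertible on $\lrdd$. At this stage I would invoke the Wiener property in Theorem \ref{thmquasi}$(iii)$ (transferred from Kohn--Nirenberg to Weyl quantization via Theorem \ref{changeOfQuant}) to produce an inverse of the form $b^w(z,D)$ with $b\in M^{\infty,q}_{1\otimes v_s}(\bR^{4d})$. Reading this inversion back through Lemma \ref{elena} and Corollary \ref{cor44}, I would conclude that
\[
(T^\ast T)^{-1}\in FIO(\Id,M^{\infty,q}_{1\otimes v_s}).
\]

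Finally, I would write $T^{-1}=(T^\ast T)^{-1}T^\ast$ and apply the algebra property once more, in combination with Lemma \ref{lemmaTast}, to deduce
\[
T^{-1}\in FIO(\Id\cdot S^{-1},M^{\infty,q}_{1\otimes v_s})=FIO(S^{-1},M^{\infty,q}_{1\otimes v_s}),
\]
which is the claim. The step I expect to be the main obstacle is the careful identification between the operator $K$ on $\lrdd$ and the Weyl pseudodifferential operator $a^w(z,D)$, together with the transfer of $\lrdd$-invertibility between these two descriptions: this identification rests on Corollary \ref{cor44} and Lemma \ref{elena}, and it is precisely what unlocks Theorem \ref{thmquasi}$(iii)$ in the present Wigner-kernel setting.
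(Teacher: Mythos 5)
Your proposal is correct and follows essentially the same route as the paper's own proof: pass to $P=T^\ast T\in FIO(\Id,M^{\infty,q}_{1\otimes v_s})$ via Lemma \ref{lemmaTast} and the algebra property, invoke the Wiener property for pseudodifferential operators to get $P^{-1}\in FIO(\Id,M^{\infty,q}_{1\otimes v_s})$, and conclude from $T^{-1}=P^{-1}T^\ast$. You are slightly more explicit than the paper in flagging the use of Lemma \ref{elena} for the transfer of $L^2(\rdd)$-invertibility and the change of quantization, but the argument is the same.
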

	\begin{proof}
		Let $T^\ast$ be the $L^2$ adjoint of $T$. By Lemma \ref{lemmaTast}, $T^\ast\in FIO(S^{-1},M^{\infty,q}_{1\otimes v_s})$. Since $T$ is invertible in $B(L^2(\rd))$, $T^\ast$ is invertible in $B(L^2(\rd))$ and the operator $P=T^\ast T$ is invertible in $B(L^2(\rd))$. Moreover, $P\in FIO(I_{2d\times 2d},M^{\infty,q}_{1\otimes v_s})$ by Theorem \ref{thm46}.  In other words, the operator $K=K_P$ related to $P$ in  \eqref{I3} is a pseudodifferential operator with symbol in $M^{\infty,q}_{1\otimes v_s}(\bR^{4d})$.  By \cite[Theorem 4.6]{CG2023quasi} $K_P^{-1}$  is a pseudodifferential operator with symbol  in $M^{\infty,q}_{1\otimes v_s}(\bR^{4d})$ which implies that the related operator $P^{-1}$ is in the class $FIO(I_{2d\times 2d},M^{\infty,q}_{1\otimes v_s})$.
		Finally, the algebra property of Theorem \ref{thm46} gives $T^{-1}=P^{-1}T^\ast\in FIO(S^{-1},M^{\infty,q}_{1\otimes v_s})$,  as desired.
		
	\end{proof}
	
	\begin{corollary}
		$FIO(\mbox{Sp}(d,\bR),M^{\infty,q}_{1\otimes v_s})=\bigcup_{S\in \mbox{Sp}(d,\bR)}FIO(S,M^{\infty,q}_{1\otimes v_s})$ is a Wiener subalgebra of $B(L^2(\rd))$.
	\end{corollary}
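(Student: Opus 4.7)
The plan is to verify the three defining properties of a Wiener subalgebra of $B(L^2(\rd))$: (a) inclusion into $B(L^2(\rd))$, (b) closure under composition, and (c) inverse-closedness with respect to $B(L^2(\rd))$. Each of these properties has essentially been proved in this section for the individual classes $FIO(S,M^{\infty,q}_{1\otimes v_s})$, and the remaining task is to lift them to the union using the group structure of $\mbox{Sp}(d,\bR)$.

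For (a), Theorem \ref{tc1old}(i) shows that every $T \in FIO(S, M^{\infty,q}_{1\otimes v_s})$ is bounded on $L^2(\rd)$ for each $S \in \mbox{Sp}(d,\bR)$, so taking the union preserves this inclusion. For (b), given $T_1, T_2$ in the union, there exist $S_1, S_2 \in \mbox{Sp}(d,\bR)$ with $T_j \in FIO(S_j, M^{\infty,q}_{1\otimes v_s})$ for $j=1,2$; the algebra property of Theorem \ref{thm46} yields $T_1 T_2 \in FIO(S_1 S_2, M^{\infty,q}_{1\otimes v_s})$, and since $S_1 S_2 \in \mbox{Sp}(d,\bR)$ the composition remains in the union.

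For (c), take $T$ in the union which is invertible in $B(L^2(\rd))$. Then $T \in FIO(S, M^{\infty,q}_{1\otimes v_s})$ for some $S \in \mbox{Sp}(d,\bR)$, and the inverse-closedness theorem proved just above this corollary gives $T^{-1} \in FIO(S^{-1}, M^{\infty,q}_{1\otimes v_s})$. Since $\mbox{Sp}(d,\bR)$ is a group, $S^{-1} \in \mbox{Sp}(d,\bR)$, and so $T^{-1}$ lies in the union. I do not anticipate any real obstacle: once the three individual-level results are in hand, the upgrade to the union is purely set-theoretic, and the group structure of $\mbox{Sp}(d,\bR)$ guarantees that the parameter $S$ never leaves the index set under the operations considered.
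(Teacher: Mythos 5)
Your proof is correct and follows exactly the implicit argument in the paper: the corollary is an immediate assembly of Theorem \ref{tc1old}(i), Theorem \ref{thm46}, and the inverse-closedness theorem preceding it, combined with the group structure of $\mbox{Sp}(d,\bR)$, which is precisely what you spell out.
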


\section{FIOs of type I and Schr\"odinger propagators}\label{sec:5}
We start this section by  considering FIOs of type I of the type \eqref{FIO1}
with  quadratic phases  $\Phi$  in \eqref{QuadTamePhase}. If $\sigma\in M^{\infty,q}_{1\otimes v_s}(\rdd)$ for some $0<q\leq1$ and $s\geq0$, we shall prove that the FIO  $T_I$ is in the class $FIO(S,M^{\infty,q}_{1\otimes v_s})$, with $S$ being the symplectic matrix  in \eqref{decompS}.
\begin{theorem}
	Let $T_I$ be a FIO of type I as above with $\sigma\in\cS(\rdd)$. Then, the Wigner kernel of $T_I$ is given by
	\begin{equation}\label{defKi}
		k_I(x,\xi,y,\eta)=\cF_2\sigma_I(x,\eta,\xi-CA^{-1}x-A^{-T}\eta,y-A^{-1}x+A^{-1}B\eta),
	\end{equation}
	where $\cF_2$ is the partial Fourier transform defined in \eqref{FT2}, and
	\begin{equation}\label{defsigmai}
		\sigma_I(x,\eta,t,r):=\sigma(x+t/2,\eta+r/2)\overline{\sigma(x-t/2,\eta-r/2)}.
	\end{equation}
Besides, if $\sigma\in M^{p,q}_{1\otimes v_s}(\rdd)$, $0<p,q\leq\infty$ and $s\geq0$, then $\sigma_I\in M^{p,q}_{1\otimes v_s}(\bR^{4d})$. 
\end{theorem}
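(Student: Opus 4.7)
The plan is to compute $k_I$ directly by inserting the FIO formula \eqref{FIO1} into the definition \eqref{WigDist} of $W(T_If,T_Ig)$, and then to establish the modulation-space statement by recognizing $\sigma_I$ as a linear change of variables applied to $\sigma\otimes\bar\sigma$.

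For the kernel formula: starting from \eqref{WigDist} and \eqref{FIO1}, expand
\[
W(T_If,T_Ig)(x,\xi) = \iiint \sigma(x+t/2,\eta)\overline{\sigma(x-t/2,\eta')}\hat f(\eta)\overline{\hat g(\eta')}\,e^{2\pi i[\Phi(x+t/2,\eta)-\Phi(x-t/2,\eta')-\xi t]}\,d\eta\,d\eta'\,dt.
\]
The substitution $\eta=u+r/2$, $\eta'=u-r/2$ collapses the two symbol factors into $\sigma_I(x,u,t,r)$. The key algebraic step is simplifying the phase difference $\Phi(x+t/2,u+r/2)-\Phi(x-t/2,u-r/2)$. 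Using the identity $(a+b)^TM(a+b)-(a-b)^TM(a-b)=4a^TMb$ valid for symmetric $M$, applied to the matrices $CA^{-1}$ and $A^{-1}B$ (whose symmetry follows from the symplectic relations \eqref{symprel}, specifically $A^TC=C^TA$ and $AB^T=BA^T$), the phase collapses to the linear expression $xCA^{-1}t+uA^{-1}t+rA^{-1}x-uA^{-1}Br-\xi t$.

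Next, to expose the Wigner kernel structure, rewrite $\hat f(u+r/2)\overline{\hat g(u-r/2)}$ as a partial Fourier transform of $W(f,g)$: the substitution $(\alpha,\beta)=(y+s/2,y-s/2)$ in \eqref{WigDist} gives
\[
\int_{\rd}W(f,g)(y,\eta)\,e^{-2\pi iyr}\,dy = \hat f(\eta+r/2)\overline{\hat g(\eta-r/2)}.
\]
Substituting this back, relabeling $u\to\eta$, and applying Fubini's theorem identifies the coefficient of $W(f,g)(y,\eta)$ as precisely the right-hand side of \eqref{defKi}, by recognizing the inner integral in $(t,r)$ as $\cF_2\sigma_I$ evaluated at the affine shifts $(\xi-CA^{-1}x-A^{-T}\eta,\,y-A^{-1}x+A^{-1}B\eta)$. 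Uniqueness of the Wigner kernel (Theorem \ref{3.3}) then concludes this step.

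For the modulation-space property: define the invertible linear map $L\in\mbox{GL}(4d,\bR)$ by $L(x,\eta,t,r)=(x+t/2,\eta+r/2,x-t/2,\eta-r/2)$, so that $\sigma_I=\mathfrak{T}_L(\sigma\otimes\bar\sigma)$ in the notation of Example \ref{ex23}(ii). Corollary \ref{cor29}, applied with $d$ replaced by $2d$, yields $\sigma\otimes\bar\sigma\in M^{p,q}_{1\otimes v_s}(\bR^{4d})$. The symplectic projection $\mathcal{D}_L$ of $\mathfrak{T}_L$ is block diagonal (hence upper block triangular), and $v_s\circ L^T\asymp v_s$ since $L^T$ is invertible; Proposition \ref{prop25}(ii) then gives that $\mathfrak{T}_L$ preserves $M^{p,q}_{1\otimes v_s}(\bR^{4d})$, so $\sigma_I$ belongs to this space. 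The main technical obstacle is the algebraic simplification of the phase difference in the first step, which requires careful bookkeeping of transposes via the symplectic identities $CA^{-1}=A^{-T}C^T$ and $A^{-1}B=B^TA^{-T}$.
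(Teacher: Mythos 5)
Your proof is correct, and for the kernel formula it takes a genuinely different route than the paper. The paper simply cites the proof of Theorem~5.10 in \cite{CRGFIO1} for formula \eqref{defKi} and only proves the modulation-space assertion; you instead give a direct, self-contained derivation by plugging \eqref{FIO1} into the definition of $W(T_If,T_Ig)$, substituting $\eta=u+r/2$, $\eta'=u-r/2$ to form $\sigma_I$, simplifying the phase difference via the quadratic-form identity and the symmetry of $CA^{-1}$, $A^{-1}B$, and then using the Fourier-side representation $W(f,g)(y,\eta)=\int\hat f(\eta+r/2)\overline{\hat g(\eta-r/2)}e^{2\pi iyr}\,dr$ to expose $W(f,g)$ and read off the coefficient. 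That is a valid and more transparent approach; the manipulations are justified since $\sigma\in\cS(\rdd)$ and $f,g\in\cS(\rd)$, so Fubini applies throughout. For the modulation-space part, your argument is essentially identical to the paper's: your linear map $L$ is exactly the paper's matrix $M$ (with $\det M=1$), giving $\sigma_I=\mathfrak{T}_M(\sigma\otimes\bar\sigma)$, and then Corollary \ref{cor29}(i) plus Proposition \ref{prop25}(ii) finish (you are slightly more explicit than the paper in noting that $\mathcal{D}_M$ is block diagonal and that $(1\otimes v_s)\circ\mathcal{D}_M\asymp 1\otimes v_s$, which is the right level of care). Two minor imprecisions worth flagging but not affecting correctness: the identity $AB^T=BA^T$ is not literally one of the listed relations in \eqref{symprel} (those come from $S^TJS=J$; $AB^T=BA^T$ comes from the equivalent $SJS^T=J$), and your parenthetical ``the substitution $(\alpha,\beta)=(y+s/2,y-s/2)$ in \eqref{WigDist}'' is a loose description — the identity you use is really the frequency-side form of the Wigner distribution, verified directly via Plancherel.
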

\begin{proof}
	Formula \eqref{defKi} follows from the proof of Theorem 5.10 in \cite{CRGFIO1}. It remains to show that $\sigma_I\in M^{p,q}_{1\otimes v_s}(\bR^{4d})$. Consider
	\[
		M=\begin{pmatrix}
			I_{d\times d} & 0_{d\times d} & I_{d\times d}/2 & 0_{d\times d}\\
			0_{d\times d} & I_{d\times d} & 0_{d\times d} & I_{d\times d}/2\\
			I_{d\times d} & 0_{d\times d} & -I_{d\times d}/2 & 0_{d\times d}\\
			0_{d\times d} & I_{d\times d} & 0_{d\times d} & -I_{d\times d}/2
		\end{pmatrix}.
	\]
	Since $\det M=1$, we have:
	\[
		\sigma_I(x,\xi,y,\eta)=\mathfrak{T}_M(\sigma\otimes\bar\sigma)(x,\xi,y,\eta).
	\]
The assertion will follow from Proposition \ref{prop25} if we prove that $\sigma\otimes\bar\sigma\in M^{p,q}_{1\otimes v_s}(\bR^{4d})$, which follows directly by Corollary \ref{cor29}, item $(i)$.
\end{proof}

We are ready to show that $T_I$ is in the class $FIO(S,M^{\infty,q}_{1\otimes v_s})$.

\begin{theorem}\label{5.2}
		Let $T_I$ be a FIO of type I as above with symbol $\sigma\in M^{\infty,q}_{1\otimes v_s}$. Then $T_I\in FIO(S,M^{\infty,q}_{1\otimes v_s})$.
\end{theorem}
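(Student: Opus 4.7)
My approach is to combine the explicit Wigner-kernel formula derived in the preceding theorem with Theorem \ref{thm41} and Corollary \ref{cor44}: the goal is to realise $k_I(z,w)$ as $h(z,Sw)$, where $h$ is the Schwartz kernel of a Kohn--Nirenberg pseudodifferential operator on $\rdd$ with symbol in $M^{\infty,q}_{1\otimes v_s}(\bR^{4d})$.

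Starting from
$$k_I(x,\xi,y,\eta)=\cF_2\sigma_I(x,\eta,\xi-CA^{-1}x-A^{-T}\eta,y-A^{-1}x+A^{-1}B\eta),$$
I substitute $w=S^{-1}u$ and define $h(z,u):=k_I(z,S^{-1}u)$. The symplectic relations \eqref{symprel} give $S^{-1}=\begin{pmatrix}D^{T} & -B^{T}\\ -C^{T} & A^{T}\end{pmatrix}$, and together with $A^{T}C=C^{T}A$, $BA^{T}=AB^{T}$, $A^{T}D-C^{T}B=I$ they allow one to simplify the arguments of $\sigma_I$, yielding an expression of the form
$$h(z,u)=\cF_2\sigma_I(\mathcal{L}(z,u)),$$
for an explicit invertible linear map $\mathcal{L}$ on $\bR^{4d}\times\bR^{4d}$, where $\cF_2$ denotes the partial Fourier transform in the last $2d$ coordinates of $\bR^{4d}$. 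Hence $h=\mathfrak{T}_\mathcal{L}\cF_2\sigma_I$ up to normalization, i.e., $h=\hat\cA^{-1}\sigma_I$ for the metaplectic operator $\hat\cA$ with $\hat\cA^{-1}=\mathfrak{T}_\mathcal{L}\,\cF_2$, whose projection $\cA$ is assembled via Theorem \ref{lift} from $\mathcal{D}_{\mathcal{L}^{-1}}$ and the projection of $\cF_2$. Thus $h$ is the Schwartz kernel of the metaplectic pseudodifferential operator $Op_\cA(\sigma_I)$ (Definition \ref{pseudoA}, adapted to operators on $\rdd$).

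The decisive step is to verify that $\cA$ is admissible in the sense of Theorem \ref{thm41}, i.e., that $\cA A_0^{-1}$ is upper block triangular (equivalently, that the $2d\times 2d$ blocks of $\cA$ satisfy the analogue of \eqref{equivAdmissible}). This is a direct, if tedious, block computation relying once more on the symplectic identities for $S$. Once admissibility is secured, Theorem \ref{thm41} produces a symbol $\sigma\in M^{\infty,q}_{1\otimes v_s}(\bR^{4d})$ whose Kohn--Nirenberg quantization has Schwartz kernel $h$, since $\sigma_I\in M^{\infty,q}_{1\otimes v_s}(\bR^{4d})$ by the preceding theorem. Corollary \ref{cor44} then gives $T_I\in FIO(S,M^{\infty,q}_{1\otimes v_s})$. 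I anticipate the main technical obstacle to be precisely this admissibility verification: although each ingredient is standard, the composition requires careful exploitation of the symplectic structure of $S$ to reach the required block-triangular form.
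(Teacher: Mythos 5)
Your proposal follows essentially the same route as the paper's proof: start from the explicit formula \eqref{defKi} for the Wigner kernel, absorb the change of variables $w\mapsto Sw$ (equivalently set $h(z,u)=k_I(z,S^{-1}u)$), and recognise what remains as the Schwartz kernel of a pseudodifferential operator on $\rdd$ with symbol in $M^{\infty,q}_{1\otimes v_s}(\bR^{4d})$. The step you flag as the ``decisive technical obstacle'' --- the admissibility of the $\cA$ with $\hat\cA^{-1}=\mathfrak{T}_{\mathcal L}\cF_2$, i.e.\ $\cA A_0^{-1}$ upper block triangular --- is exactly what the paper resolves by exhibiting the linear part of \eqref{defKi} as $\mathfrak{T}_{R_I}\cF_2$ with $R_I$ an \emph{explicit} upper block triangular matrix, then invoking Proposition \ref{intert} to convert $\mathfrak{T}_{R_I}\cF_2$ into $\cF_2\hat U_I$ with $U_I$ upper block triangular; tracing through, $\cA A_0^{-1}=U_I^{-1}$, so admissibility follows immediately rather than requiring a separate ad hoc block check. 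Two small corrections to your write-up: the metaplectic projection of $\mathfrak T_{\mathcal L}$ is $\mathcal D_{\mathcal L}$ (not $\mathcal D_{\mathcal L^{-1}}$), and Theorem \ref{lift} is irrelevant here --- it concerns lifting a pair of operators to a tensor product; the projection $\cA$ is obtained simply by multiplying the symplectic matrices $\cA_{FT2}^{-1}$ and $\mathcal D_{\mathcal L^{-1}}$, with no lifting involved. Finally, once (i) of Theorem \ref{thm41} is secured, Definition \ref{defFIOclass} already gives $T_I\in FIO(S,M^{\infty,q}_{1\otimes v_s})$; you do not need to pass through the Weyl quantization of Corollary \ref{cor44}.
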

\begin{proof}
	Formula \eqref{defKi} reads as:
	\begin{equation}
		k_I(x,\xi,y,\eta)=\mathfrak{T}_{R_I}\cF_2\sigma_I(x,\xi,S(y,\eta)-(x,\xi)),
	\end{equation}
	where $S$ is the canonical transformation associated to $\Phi$ in \eqref{decompS}, and
	\begin{equation}	
		R_I=\left(\begin{array}{c c | c c}
			I_{d\times d} & 0_{d\times d} & 0_{d\times d} & 0_{d\times d}\\
			-C^T & A^T & -C^T & A^T\\
			\hline
			0_{d\times d} & 0_{d\times d} & CA^{-1} & -I_{d\times d}\\
			0_{d\times d} & 0_{d\times d} & A^{-1} & 0_{d\times d}
			\end{array}
		\right),
	\end{equation}
	which is upper block triangular. Hence, up to a sign, $\mathfrak{T}_{R_I}\cF_2=\cF_2\hat U_I$, where $U_I$ is upper block triangular. Let $\tilde\sigma_I:=\hat U\sigma_I$, so that
	\[
		k_I(x,\xi,y,\eta)=\cF_2 \hat U_I\sigma_I(x,\xi,S(y,\eta)-(x,\xi))=\cF_2\tilde\sigma_I(x,\xi,S(y,\eta)-(x,\xi)).
	\]
	By item $(i)$ of Proposition \ref{prop25}, $\tilde\sigma_I\in M^{\infty,q}_{1\otimes v_s}(\bR^{4d})$. By \eqref{kerandsymb}, this entails that $k_I(z,w)=h_I(z,Sw-z)$, where $h_I$ is the Schwartz kernel of the pseudodifferential operator $\tilde\sigma_I(z,D)$, with symbol $\tilde\sigma_I\in M^{\infty,q}_{1\otimes v_s}(\bR^{4d})$. Equivalently, $T_I\in FIO(S,M^{\infty,q}_{1\otimes v_s})$.
\end{proof}

The $L^2$-adjoint of a FIO of type $I$ is a FIO of type II, written  formally as
\begin{equation}\label{FIOII}
	T_{II}f(x)=\int_{\rdd}e^{-2\pi i[\Phi(y,\xi)-x\xi]}\tau(y,\xi)f(y)dyd\xi, \qquad f\in\cS(\rd).
\end{equation}
It was shown in \cite{CG2023quasi} that if $T_I$ is the FIO of type I above with symbol in $M^{\infty,q}_{1\otimes v_s}(\rdd)$ then its adjoint $T_{II}=T_I^{\ast}$ is bounded on $\lrd$ and has symbol $\tau \in M^{\infty,q}_{1\otimes v_s}(\rdd)$.\par
As a consequence of Lemma \ref{lemmaTast} we obtain
\begin{corollary}
If  $T_I\in FIO(S,M^{\infty,q}_{1\otimes v_s})$, $0<q\leq1$, $s\geq0$, $S\in\mbox{Sp}(d,\bR)$, then $T_{II}\in FIO(S^{-1},M^{\infty,q}_{1\otimes v_s})$.
\end{corollary}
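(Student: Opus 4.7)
The plan is to simply combine the two ingredients that are already in place: the identification $T_{II}=T_I^{\ast}$ as $L^2$-adjoints and the adjoint-taking result of Lemma \ref{lemmaTast}.

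First I would recall, from the discussion preceding the corollary (and the cited result in \cite{CG2023quasi}), that when $\sigma\in M^{\infty,q}_{1\otimes v_s}(\rdd)$ with $0<q\leq 1$ and $s\geq 0$, the operator $T_I=T_{I,\Phi,\sigma}$ extends to a bounded operator on $\lrd$, and its $L^2$-adjoint can be written in the form \eqref{FIOII} with symbol $\tau\in M^{\infty,q}_{1\otimes v_s}(\rdd)$. In particular, as bounded operators on $\lrd$,
\[
T_{II}=T_I^{\ast}.
\]

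Next I would invoke the hypothesis $T_I\in FIO(S,M^{\infty,q}_{1\otimes v_s})$ (which, by Theorem \ref{5.2}, is automatically granted as soon as $\sigma\in M^{\infty,q}_{1\otimes v_s}(\rdd)$ and $S$ is the canonical transformation attached to the quadratic phase $\Phi$). Since $T_I$ belongs to the class attached to $S$, Lemma \ref{lemmaTast} applies and yields
\[
T_I^{\ast}\in FIO(S^{-1},M^{\infty,q}_{1\otimes v_s}).
\]

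Combining the two observations gives $T_{II}=T_I^{\ast}\in FIO(S^{-1},M^{\infty,q}_{1\otimes v_s})$, which is the claim. No genuine obstacle arises here: the whole content is already encoded in Lemma \ref{lemmaTast} (whose proof analyzed the Wigner kernel of $T^{\ast}$ through the symplectic change of variables $L$ and the upper block triangular matrix $L'=L_0^{-1}LL_0$), and the statement of the corollary is simply the translation of that abstract adjoint result into the concrete language of FIOs of type II.
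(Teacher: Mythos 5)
Your argument is exactly the one the paper has in mind: it states immediately before the corollary that the $L^2$-adjoint of a type I FIO with symbol in $M^{\infty,q}_{1\otimes v_s}$ is the corresponding type II operator, and then derives the corollary directly from Lemma \ref{lemmaTast}. Your proposal is correct and follows the same route.
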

For boundedness and further properties of FIOs we refer to \cite{concetti-garello-toft,CGNRJMPA,CGNRJMP2014}.
\subsection{Application to Schr\"{o}dinger equations}

An application of the theory above is the time-frequency representation of solutions to Cauchy problems for  Schr\"{o}dinger equations.
Our goal will be to represent the Wigner kernel of the  Schr\"{o}dinger propagator $e^{itH}$,  solution to the Cauchy problem 
\begin{equation}\label{C6intro}
	\begin{cases} i \displaystyle\frac{\partial
			u}{\partial t} +a(x,D)u+\sigma(x,D) u=0,\\
		u(0,x)=u_0(x),
	\end{cases}
\end{equation}
with initial datum $u_0\in\cS(\rd)$. The Hamiltonian  $H=a(x,D)u+\sigma(x,D)$ is the sum of two pseudodifferential operators. The first $a(x,D)$ is  the quantization of a quadratic form
whereas $\sigma(x,D)$ is the Kohn-Nirenberg operator expressing the perturbation. Here we work with symbols $\sigma$ in $M^{\infty,q}_{1\otimes v_s}(\rdd)$, $0<q\leq 1$.

First,  consider the unperturbed  case $\sigma=0$, namely
\begin{equation}\label{C12}
	\begin{cases} i \displaystyle\frac{\partial
			u}{\partial t} +a(x,D) u=0\\
		u(0,x)=u_0(x).
	\end{cases}
\end{equation}
The solution is given by metaplectic operators
$u=\widehat{S}_tu_0$, $t\in\bR$, for a suitable symplectic matrix
$\mathcal{S}_t$, see  \cite[Chp. 4]{Gos11}. Precisely, if $a(x,\xi)=\frac{1}{2}\xi \mathbb{B}\xi+\xi \mathbb{A}
x-\frac{1}{2}x\mathbb{C}x$, with $\mathbb{B}, \mathbb{C}$ symmetric, we
can consider the classical evolution, given by the linear Hamiltonian
system
\begin{equation}\label{sistemE0}
\begin{cases}
	2\pi \dot x=\nabla  _\xi a  =\mathbb{A}x+\mathbb{B}\xi\\
	2\pi \dot \xi=-\nabla _x a =\mathbb{C} x-\mathbb{A}^T\xi
\end{cases}
\end{equation}
(the factor $2\pi$ depends on our normalization of the Fourier transform) with Hamiltonian matrix $\mathbb{S}:=\begin{pmatrix}\mathbb{A}&\mathbb{B}\\
	\mathbb{C}&-\mathbb{A}^T\end{pmatrix}\in \mathrm{sp} (d,\bR )$.

Where $\mathrm{sp} (d,\bR )$ is the Lie algebra  of $Sp(d,\bR)$ and consists of all $X\in M(2d,\bR)$
such that
$XJ + JX^T = 0 $.
 The solution to \eqref{sistemE0} is the linear symplectic diffeomorphism  $S_t:\bR_{y,\eta}^{2d}\to \bR_{x,\xi}^{2d}$, for every fixed $t\in\bR$, with
 $S_t(y,\eta)=(x(t,y,\eta), \xi(t,y,\eta))$. Hence, for every $t\in\bR$, 
 $S_t=e^{t\mathbb{S}}\in Sp(d,\R)$. We assume  $S_t$ having  block decomposition
	\begin{equation}\label{decompSt}
	S_t=\begin{pmatrix}
		A_t & B_t\\ C_t & D_t
	\end{pmatrix}, \qquad A_t,B_t,C_t,D_t\in\bR^{d\times d}.
\end{equation}

Since $$e^{t\mathbb{S}}=\sum_{k=0}^\infty \frac{t^k}{k!} \mathbb{S}^k =I+t\sum_{k=1}^\infty \frac{t^{k-1}}{(k-1)!} \mathbb{S}^k , $$
There exists a $T^*>0$ such that the block $A_t$ in formula \eqref{decompSt} is invertible for every $|t|<T^*$. Precisely,
\begin{equation}\label{tempoE0}
	\exists\, T^*>0\quad\mbox{such\, that }\, \det A_t\not=0\qquad \forall t:\, |t|<T^*.
\end{equation}
 
We now come back to the perturbed problem. 
The propagator $e^{itH}$ providing the solution $u(t,x)=e^{itH}u_0(x)$ to \eqref{C6intro} is formally given by
 a generalized metaplectic operator. The result was initially proved for perturbations $\sigma(x,D)$ with  $\sigma\in M^{\infty,1}_{1\otimes v_s}(\rdd )$ in \cite[Theorem 4.1]{CGNRJMP2014} and  extended  to  $\sigma\in M^{\infty,q}_{1\otimes v_s}(\rdd )$, $0<q\leq1$ in \cite[Theorem 7.2]{CGRPartII2022}.
Namely, 
\begin{theorem}\label{teofinal0}
	Let $H = a(x,D) + \sigma(x,D)$ be a Hamiltonian with a real quadratic
	homogeneous polynomial $a$ and  $\sigma\in
	M^{\infty,q}_{1\otimes v_s}(\rdd ) $, $0<q\leq1$. Then the corresponding
	propagator $ e^{itH}$ is a so-called  \emph{generalized metaplectic operator} for every	$t \in\bR$.
	Specifically, the solution of the homogenous problem  \eqref{C12} is given by a
	metaplectic operator $\widehat{S_t}$, and $e^{itH}$
	is of the from
	\begin{equation}\label{soluzioneE1}
	e^{itH} = \widehat{S_t}b_t(x,D),
	\end{equation}
	for some symbol $b_t\in M^{\infty,q}_{1\otimes v_s}(\rdd)$.
\end{theorem}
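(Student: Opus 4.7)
The strategy is to factor out the unperturbed propagator and reduce the full problem to a Volterra integral equation in the quasi-Banach algebra of pseudodifferential operators with symbols in $M^{\infty,q}_{1\otimes v_s}(\rdd)$. From classical metaplectic theory applied to the homogeneous Cauchy problem \eqref{C12}, one may take $\widehat{S_t}\in\mbox{Mp}(d,\bR)$ with symplectic projection $S_t=e^{t\mathbb{S}}\in \mbox{Sp}(d,\bR)$ satisfying $\widehat{S_0}=\mathrm{Id}$ and $i\partial_t\widehat{S_t}=-a(x,D)\widehat{S_t}$. Setting $U(t):=\widehat{S_t}^{-1}e^{itH}$, so that $e^{itH}=\widehat{S_t}U(t)$ and $U(0)=\mathrm{Id}$, the goal becomes to prove $U(t)=b_t(x,D)$ for some $b_t\in M^{\infty,q}_{1\otimes v_s}(\rdd)$. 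Differentiating cancels the quadratic part of $H$ and yields
\[
\partial_t U(t)=i\,\widehat{S_t}^{-1}\sigma(x,D)\widehat{S_t}\,U(t),\qquad U(0)=\mathrm{Id},
\]
equivalently the Volterra integral equation $U(t)=\mathrm{Id}+i\int_0^t\sigma_\tau(x,D)U(\tau)\,d\tau$, where $\sigma_\tau(x,D):=\widehat{S_\tau}^{-1}\sigma(x,D)\widehat{S_\tau}$.

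The heart of the argument is to show that $\sigma_\tau(x,D)$ is itself a Kohn--Nirenberg pseudodifferential operator with symbol $\sigma_\tau\in M^{\infty,q}_{1\otimes v_s}(\rdd)$, depending continuously on $\tau$ with norm locally bounded. The Weyl calculus is symplectically covariant, so that $\widehat{S_\tau}^{-1}c^w(x,D)\widehat{S_\tau}=(c\circ S_\tau)^w(x,D)$ for any Weyl symbol $c$. Converting the Kohn--Nirenberg symbol $\sigma$ to its Weyl counterpart via the change-of-quantization formula Theorem \ref{changeOfQuant}, composing with $S_\tau$, and converting back, one writes $\sigma_\tau=\widehat{U_\tau}\sigma$ for a metaplectic operator $\widehat{U_\tau}$ whose symplectic projection depends smoothly on $\tau$. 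Proposition \ref{prop25}(i), combined with the submultiplicativity $v_s(x,\xi)\leq v_s(x)v_s(\xi)$ and the tensorial estimates of Corollary \ref{cor29}, then produces the uniform bound $\|\sigma_\tau\|_{M^{\infty,q}_{1\otimes v_s}}\leq C(\tau)\|\sigma\|_{M^{\infty,q}_{1\otimes v_s}}$ with $C(\tau)$ continuous in $\tau$.

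With this estimate in hand, I would solve the Volterra equation by Picard iteration inside the quasi-Banach algebra provided by Theorem \ref{thmquasi}(ii); this produces a fixed point $U(t)=b_t(x,D)$ with $b_t\in M^{\infty,q}_{1\otimes v_s}(\rdd)$, yielding the factorization $e^{itH}=\widehat{S_t}b_t(x,D)$. As a byproduct, combining this with Theorem \ref{5.2} (which gives $b_t(x,D)\in FIO(I_{2d\times 2d},M^{\infty,q}_{1\otimes v_s})$ since pseudodifferential operators are FIOs of type I with trivial phase $\Phi(x,\eta)=x\eta$) and the algebra property Theorem \ref{tc1old}(ii), one concludes the membership $e^{itH}\in FIO(S_t,M^{\infty,q}_{1\otimes v_s})$ claimed in Theorem \ref{teofinal}.

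The main obstacle is precisely the symplectic-invariance step: for general $S_\tau$ that is not upper block triangular, Proposition \ref{prop25} only guarantees invariance of the symmetric modulation spaces $M^p_m$, not of the genuinely mixed-norm class $M^{\infty,q}_{1\otimes v_s}$ with $q\neq\infty$. The delicate task is thus to verify that the conjugation $\sigma\mapsto\sigma_\tau$ preserves $M^{\infty,q}_{1\otimes v_s}$, which relies on the specific submultiplicativity of the weight $1\otimes v_s$ together with a decomposition of the metaplectic action $\widehat{U_\tau}$ along the generators of $\mbox{Sp}(d,\bR)$ listed in Proposition \ref{genSp}; each elementary factor can be handled by a combination of Proposition \ref{prop25}(ii) and direct estimates, producing constants polynomially controlled in $\tau$, which is exactly what makes Picard iteration viable on compact time intervals.
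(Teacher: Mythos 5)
Your overall strategy — factor out the homogeneous propagator, derive the Volterra equation for $U(t)=\widehat{S_t}^{-1}e^{itH}$, identify the conjugated generator as a Kohn--Nirenberg operator with symbol in $M^{\infty,q}_{1\otimes v_s}$, and then run Picard iteration in the quasi-Banach algebra of Theorem~\ref{thmquasi} — is indeed the argument used in the literature. Note, though, that the paper does not prove Theorem~\ref{teofinal0} itself: it cites it from \cite{CGNRJMP2014} (for $q=1$) and \cite{CGRPartII2022} (for $0<q\le 1$). So you have effectively reconstructed the cited proof.

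The one place you go astray is where you declare the symplectic-invariance step the ``main obstacle'' and propose a workaround through the generators of $\mbox{Sp}(d,\bR)$. That concern is based on a conflation. The matrix $S_\tau$ acts on $\rdd$ as a change of variables on the \emph{symbol domain}, not on the phase space of the symbol. The corresponding operator on symbols is $\mathfrak{T}_{S_\tau}\in\mbox{Mp}(2d,\bR)$ (recall $\det S_\tau=1$), whose symplectic projection is $\mathcal{D}_{S_\tau}=\mbox{diag}(S_\tau^{-1},S_\tau^{T})\in\mbox{Sp}(2d,\bR)$ — which is block diagonal, hence in particular upper block triangular, regardless of whether $S_\tau$ itself is. Moreover
\[
(1\otimes v_s)\circ\mathcal{D}_{S_\tau}(z,\zeta)=v_s(S_\tau^{T}\zeta)\asymp v_s(\zeta)=(1\otimes v_s)(z,\zeta),
\]
with constants that are polynomially controlled by $\|S_\tau\|$ and $\|S_\tau^{-1}\|$, hence locally bounded in $\tau$. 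So Proposition~\ref{prop25}(ii) applies directly to $\mathfrak{T}_{S_\tau}$ and gives $\|\,c\circ S_\tau\|_{M^{\infty,q}_{1\otimes v_s}}\asymp\|c\|_{M^{\infty,q}_{1\otimes v_s}}$ with a locally bounded constant; no decomposition along generators and no ``direct estimates'' are required. The same applies to the two change-of-quantization factors you use (Kohn--Nirenberg $\leftrightarrow$ Weyl): as the paper notes after Theorem~\ref{thm41}, the matrix $A_{1/2}A_0^{-1}$ is upper block triangular, and the corresponding Fourier-multiplier chirp $\psi_C$ has projection $V_C^{T}$ which fixes the second coordinate, so $1\otimes v_s$ is exactly preserved. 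With that observation your proof closes cleanly, and the remaining point to record carefully is the Picard iteration in a $q$-quasi-Banach algebra ($q\le 1$): because $\|\cdot\|^q$ is subadditive and the $n$-th Picard term is controlled by $C(t)^n t^n/n!$, the series converges in $M^{\infty,q}_{1\otimes v_s}$, which is precisely the role of the quasi-algebra estimates of Theorem~\ref{thmquasi}.
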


In  \cite{CGRPartII2022}, starting from \eqref{soluzioneE1} and the perturbation symbol $\sigma $ in the H\"ormander class
$ S^0_{0,0}(\rdd)$,  an expression of the Wigner kernel was obtained.
We present here an alternative approach based on the algebra property which allows to obtain the Wigner kernel for a more general class of perturbations. Recall that $S^0_{0,0}(\rdd)\subset M^{\infty,q}_{1\otimes v_s}(\rdd)$.

First, we shall show that the generalized metaplectic operator above can be represented as a FIO of type I, for every $t$ satisfying $|t|<T^*$, with $T^*$ in \eqref{tempoE0}.
\begin{proposition}\label{fio1}
Under the assumptions of Theorem \ref{teofinal0} there exists $T^*>0$ such that the solution $e^{itH}$ in \eqref{soluzioneE1}  can be written as a Type I FIO $T_I$ as in \eqref{FIO1}:
\begin{equation}\label{fio1E0}
	T_{I,t} f(x)=\intrd e^{2\pi i \Phi_t(x,\eta)} \sigma_t(x,\eta)\hat{f}(\eta)d\eta,
\end{equation}
with a quadratic phase $\Phi_t(x,\eta)$ as in \eqref{QuadTamePhase} and symbol $\sigma_t\in M^{\infty,q}_{1\otimes v_s}(\rdd)$.
\end{proposition}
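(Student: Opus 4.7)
The strategy is to make the metaplectic factor $\widehat{S_t}$ in \eqref{soluzioneE1} explicit as a type~I FIO, and then absorb the pseudodifferential factor $b_t(x,D)$ into its symbol. The hypothesis $|t|<T^*$, combined with \eqref{tempoE0}, ensures $\det A_t\neq 0$, so the classical Shale--Weil representation of $\widehat{S_t}$ (see e.g.\ \cite[Chp.~4]{Gos11}) applies: up to a unimodular constant $c_t\in\bC$,
\[
\widehat{S_t}\,g(x)=c_t|\det A_t|^{-1/2}\intrd e^{2\pi i\Phi_t(x,\eta)}\hat g(\eta)\,d\eta,\qquad g\in\cS(\rd),
\]
where $\Phi_t$ is a quadratic form of the type \eqref{QuadTamePhase} built from the blocks $A_t,B_t,C_t,D_t$ of $S_t$.

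Next I would plug $g=b_t(x,D)f$ into this formula. Writing $b_t(x,D)f$ in its Kohn--Nirenberg form \eqref{PseudoDef}, computing its Fourier transform by Fubini, swapping the order of integration, and applying the change of variable $\tau=\eta-\xi$, I would exploit the exact algebraic identity
\[
\Phi_t(x,\tau+\xi)-\Phi_t(x,\xi)=\tau\cdot(A_t^{-1}x-A_t^{-1}B_t\xi)-\tfrac{1}{2}\tau\cdot A_t^{-1}B_t\tau,
\]
valid thanks to the symmetry of $A_t^{-1}B_t$ (a symplectic identity, cf.\ \eqref{symprel}), to pull $e^{2\pi i\Phi_t(x,\xi)}$ out of the integral. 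This yields the type~I representation \eqref{fio1E0} with explicit symbol
\[
\sigma_t(x,\xi)=c_t|\det A_t|^{-1/2}\intrd e^{2\pi i[\tau\cdot(A_t^{-1}x-A_t^{-1}B_t\xi)-\frac{1}{2}\tau\cdot A_t^{-1}B_t\tau]}(\mathcal{F}_1 b_t)(\tau,\xi)\,d\tau,
\]
where $\mathcal{F}_1$ denotes the partial Fourier transform in the first variable.

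The remaining task is to verify $\sigma_t\in M^{\infty,q}_{1\otimes v_s}(\rdd)$. Read from right to left, the assignment $b_t\mapsto\sigma_t$ factors (up to the scalar prefactor) as a composition $\hat U_t=\mathfrak{T}_{M_t^{-1}}\circ\bigl(\,\psi_{A_t^{-1}B_t}\otimes\Id\,\bigr)$ of two metaplectic operators on $\rdd$: the first is Fourier multiplication by the chirp $e^{-\pi i\tau\cdot A_t^{-1}B_t\tau}$ in the first variable, lifted from $\rd$ to $\rdd$ via Theorem \ref{lift}; the second is the linear change of variables with
\[
M_t^{-1}=\begin{pmatrix}A_t^{-1} & -A_t^{-1}B_t\\ 0_{d\times d} & I_{d\times d}\end{pmatrix}\in\mbox{GL}(2d,\bR).
\]
By Example \ref{ex23}(iiib) the projection of $\psi_{A_t^{-1}B_t}$ is $V_{A_t^{-1}B_t}^T$, upper block triangular, so Theorem \ref{lift} yields an upper block triangular lift in $\mbox{Sp}(2d,\bR)$; by \eqref{VCDL} the projection $\mathcal{D}_{M_t^{-1}}$ of $\mathfrak{T}_{M_t^{-1}}$ is block diagonal, hence also upper block triangular. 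Thus $U_t=\pi^{Mp}(\hat U_t)$ is upper block triangular, and since its dual block is invertible, one has $(1\otimes v_s)\circ U_t\asymp 1\otimes v_s$ on $\rdd$. Proposition \ref{prop25}(ii) then provides $\hat U_t$ as a homeomorphism of $M^{\infty,q}_{1\otimes v_s}(\rdd)$ for every $0<q\leq 1$; together with $b_t\in M^{\infty,q}_{1\otimes v_s}(\rdd)$ from Theorem \ref{teofinal0}, this gives $\sigma_t\in M^{\infty,q}_{1\otimes v_s}(\rdd)$, concluding the proof.

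The hard part is precisely this last step: recognizing the explicit integral defining $\sigma_t$ as the action of a metaplectic operator whose symplectic projection is \emph{upper block triangular} in $\mbox{Sp}(2d,\bR)$. The upper block triangular structure is decisive, because Proposition \ref{prop25}(ii) yields boundedness on the mixed-norm quasi-Banach modulation spaces $M^{\infty,q}_{1\otimes v_s}$ (with $0<q\leq 1$) only in this case, whereas mere symplecticity of the projection would suffice only for unweighted $L^p$-type modulation spaces. Once the factorization $\hat U_t=\mathfrak{T}_{M_t^{-1}}\circ(\psi_{A_t^{-1}B_t}\otimes\Id)$ has been identified, the remainder of the argument is a direct Fourier/Fubini manipulation of the Shale--Weil formula for $\widehat{S_t}$.
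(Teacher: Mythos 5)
Your proof is correct and follows the same overall route as the paper: write $\widehat{S_t}$ as a Type~I FIO via the Shale--Weil (Folland) formula when $\det A_t\neq 0$, then absorb $b_t(x,D)$ into the symbol. However, you carry out the composition step carefully, whereas the paper's proof is too terse at precisely this point: it asserts $\widehat{S_t}b_t(x,D)f(x)=(\det A_t)^{-1/2}\int e^{-2\pi i\Phi_t(x,\eta)}b_t(x,\eta)\hat f(\eta)\,d\eta$, i.e.\ $\sigma_t=(\det A_t)^{-1/2}b_t$, but as written this is literally valid only when $b_t$ is $x$-independent; for a genuine Kohn--Nirenberg operator one must first compute $\widehat{b_t(\cdot,D)f}$ before applying $\widehat{S_t}$, and the resulting FIO symbol is not a scalar multiple of $b_t$. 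Your Fubini computation, the change of variable $\tau=\eta-\xi$, and the exact phase expansion (which uses the symplectic fact that $A_t^{-1}B_t$ is symmetric) produce the correct $\sigma_t$ and, crucially, expose it as $\sigma_t=c_t\,\hat U_t b_t$ with $\hat U_t\in\mbox{Mp}(2d,\bR)$ factored as a lifted chirp Fourier multiplier composed with a linear change of variables, both with upper block triangular symplectic projection. Since the lower-right block of the upper-triangular symplectic $U_t$ is automatically invertible, $(1\otimes v_s)\circ U_t\asymp 1\otimes v_s$, and Proposition~\ref{prop25}(ii) then gives $\sigma_t\in M^{\infty,q}_{1\otimes v_s}(\rdd)$ for all $0<q\leq 1$. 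This is exactly the mechanism the paper leaves implicit (effectively relying on the symbolic calculus of \cite{CGNRJMP2014,CGRPartII2022}), and your argument supplies the missing computation in full. A minor cosmetic point: your convention $e^{2\pi i\Phi_t}$ matches the definition \eqref{FIO1} of a Type~I FIO, whereas the paper's formula \eqref{f4} carries $e^{-2\pi i\Phi_t}$; the sign should be reconciled, but this does not affect the substance of either argument.
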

 \begin{proof}
By formula \eqref{soluzioneE1}, we have 
$e^{itH} = \widehat{S_t}b_t(x,D)$
with $S_t$  in \eqref{decompSt}. Moreover, there exists $T^*>0$ such that condition \eqref{tempoE0} holds. For every $t$ with $|t|<T^*$, $\det A_t\not=0$ implies the  following
formula for the metaplectic
operator $\widehat{S_t}$ \cite[Theorems 4.51]{folland89}:
\begin{equation}\label{f4}
	\widehat{S_t}f(x)=(\det
	A_t)^{-1/2}\intrd e^{-2\pi i \Phi_t(x,\eta)}\hat{f}(\eta)\,d\eta,
\end{equation}
	where
	$$	\Phi_t(x,\eta)=\frac12 x\cdot C_tA_t^{-1}x+
	\eta\cdot A_t^{-1} x-\frac12
	\eta\cdot
	A_t^{-1}B_t\eta.$$
Assuming $b_t(x,D) =\intrd b_t(x,\eta) \hat{f}(\eta) d\eta$, with $b_t\in M^{\infty,q}_{1\otimes v_s}(\rdd)$, we obtain  
$$\widehat{S_t}b_t(x,D)f(x)=(\det
A_t)^{-1/2}\intrd  e^{-2\pi i \Phi_t(x,\eta)} b_t(x,\eta) \hat{f} (\eta) d\eta,$$
and the conclusion follows by taking $\sigma_t= (\det
A_t)^{-1/2}  b_t(x,\eta)$.
 \end{proof}

Even though the type I representation holds only for small $t$ in general, the Wigner kernel representation of the propagator remains valid for every $t\in\bR$, as proved below.

\begin{theorem}\label{teofinal}
	Under the assumptions above, the propagator $e^{itH}$ is in the class $FIO(S_t,M^{\infty,q}_{1\otimes v_s})$ for every $t \in\bR$, where $S_t\in Sp(d,\bR)$ is the  solution to the Hamiltonian system  \eqref{sistemE0} written in \eqref{decompSt}.
	Namely, the Wigner kernel of $e^{itH}$ is given by 
	\[
	k(t,z,w) =h_t(z,S_tw)
	\]
	where $h_t$ is the kernel of a pseudodifferential operator of symbol  $b_t\in  M^{\infty,q}_{1\otimes v_s}(\bR^{4d})$, with continuous dependence of $t\in\bR$.
\end{theorem}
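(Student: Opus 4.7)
The plan is to combine Proposition \ref{fio1} (short-time FIO representation) with the algebra property Theorem \ref{thm46} to extend the statement from small times to all $t\in\bR$.

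First, consider $t$ with $|t|<T^{*}$, where $T^{*}$ is as in \eqref{tempoE0}. By Proposition \ref{fio1}, the propagator admits the type I representation
\[
e^{itH}f(x)=T_{I,t}f(x)=\int_{\rd}e^{2\pi i\Phi_{t}(x,\eta)}\sigma_{t}(x,\eta)\widehat{f}(\eta)\,d\eta,
\]
with quadratic phase $\Phi_{t}$ as in \eqref{QuadTamePhase} and symbol $\sigma_{t}\in M^{\infty,q}_{1\otimes v_{s}}(\rdd)$. The canonical transformation associated to $\Phi_{t}$ is precisely the symplectic matrix $S_{t}$ with block decomposition \eqref{decompSt}. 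Applying Theorem \ref{5.2}, we obtain $e^{itH}\in FIO(S_{t},M^{\infty,q}_{1\otimes v_{s}})$ for every $|t|<T^{*}$, and by Corollary \ref{cor44} the Wigner kernel has the form $k(t,z,w)=h_{t}(z,S_{t}w)$, where $h_{t}$ is the Schwartz kernel of a pseudodifferential operator with symbol in $M^{\infty,q}_{1\otimes v_{s}}$.

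Next, to extend this to arbitrary $t\in\bR$, we exploit the one-parameter group property of the flow and the algebra property. Fix $t\in\bR$ and choose an integer $N\in\bN$ large enough so that $\tau:=t/N$ satisfies $|\tau|<T^{*}$. Since $e^{itH}=(e^{i\tau H})^{N}$ and each factor lies in $FIO(S_{\tau},M^{\infty,q}_{1\otimes v_{s}})$ by the previous step, the algebra property Theorem \ref{thm46} yields
\[
e^{itH}=(e^{i\tau H})^{N}\in FIO(S_{\tau}^{N},M^{\infty,q}_{1\otimes v_{s}}).
\]
Because $\{S_{r}\}_{r\in\bR}=\{e^{r\mathbb{S}}\}_{r\in\bR}$ is a one-parameter subgroup of $\Spnr$, we have $S_{\tau}^{N}=e^{N\tau\mathbb{S}}=e^{t\mathbb{S}}=S_{t}$, so $e^{itH}\in FIO(S_{t},M^{\infty,q}_{1\otimes v_{s}})$ for every $t\in\bR$. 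Invoking Corollary \ref{cor44} again, the Wigner kernel factors as $k(t,z,w)=h_{t}(z,S_{t}w)$ with $h_{t}$ the Schwartz kernel of a pseudodifferential operator whose symbol $b_{t}$ lies in $M^{\infty,q}_{1\otimes v_{s}}(\rdd)$.

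The continuous dependence of $b_{t}$ on $t\in\bR$ should be traced through the construction. For small times it follows from the explicit formula in Proposition \ref{fio1} together with the continuous dependence of the generalized metaplectic representation \eqref{soluzioneE1}; for large times, it is transferred through the composition rule in the proof of Theorem \ref{thm46}, where the symbol of the composed pseudodifferential operator depends continuously on the symbols of the factors by the quantitative estimates underlying Theorem \ref{thmquasi}. The main technical point to verify is this propagation of continuous dependence across compositions; once established, the result is proved.
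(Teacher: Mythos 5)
Your proof is correct and follows essentially the same approach as the paper: apply Proposition \ref{fio1} and Theorem \ref{5.2} for $|t|<T^*$, then bootstrap to all $t$ via the semigroup property of $e^{itH}$, the algebra property (Theorem \ref{thm46}), and the one-parameter group law for $S_t$. The only cosmetic difference is the time-splitting: you write $t=N\tau$ with $N$ equal steps $\tau=t/N$, $|\tau|<T^*$, while the paper uses a sliding-interval trick with one residual step $t_1=t-hT_0\in(-T^*,T^*)$ plus $|h|$ steps of fixed length $T_0$. Both implement the same idea. One small point in your favor: you conclude $b_t\in M^{\infty,q}_{1\otimes v_s}$, which is what the argument actually yields; the theorem's statement that $b_t\in S^0_{0,0}(\rdd)$ appears to be a slip, since the perturbation symbol is only assumed in $M^{\infty,q}_{1\otimes v_s}$ and this class strictly contains $S^0_{0,0}$. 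Your caveat about the continuous dependence in $t$ is also fair — the paper's proof does not actually verify this claim beyond asserting it.
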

\begin{proof}
	By Proposition \ref{fio1} there exists a $T^*>0$ such that  $e^{itH}$  can be represented as a type I FIO as in \eqref{fio1E0}. By Theorem \ref{5.2}, we infer $$e^{itH}\in FIO(S_t,M^{\infty,q}_{1\otimes v_s}),\quad t\in (-T^*,T^*),$$
	where $S_t\in Sp(d,\bR)$ is the canonical transformation in \eqref{decompSt} satisfying $\det A_t\not=0$. In other words, the Wigner kernel $k(t,z,w)$  of $e^{itH}$  satisfies the equality 
	\begin{equation}\label{nucleoFIOt}
		k(t,z,w) =h_t(z, S_t(w)),\quad z,w\in \rdd,
	\end{equation}
	 for $t\in  (-T^*,T^*)$.
	
To extend the Wigner kernel property to every $t\in\bR$  we use the algebra property of $FIO(S_t,M^{\infty,q}_{1\otimes v_s})$ and the classical trick  reported below \cite{CRGFIO1}.
	
	Consider $T_0<T^*/2$ and define $I_h=(hT_0,(h+2)T_0)$, $h\in\bZ$. For $t\in\bR$, there exists an $h\in\bZ$ such that $t \in I_h$. For $t_1=t-hT_0\in (-T^*,T^*)$ we have that $e^{it_1H}\in FIO(S_{t_1})$ and for $t_2=\frac{h}{|h|}T_0\in (-T^*,T^*)$, $e^{it_2H}\in FIO(S_{t_2})$.
	Using the semigroup property of $e^{itH}$ and the  algebra property in Theorem \ref{tc1old} $(ii)$ we obtain
	$$e^{it_1H}(e^{it_2H})^{|h|}\in FIO(S_{t_1}S_{t_2}^{|h|},M^{\infty,q}_{1\otimes v_s}).$$
	By the group property of $S_t$:
	$$S_{t_1}S_{t_2}^{|h|}=S_{t_1+|h|t_2}=S_t,$$
	so that
	\begin{equation}\label{meta}
		e^{itH}\in FIO(S_t,M^{\infty,q}_{1\otimes v_s}),\quad t\in \bR,
	\end{equation}
	and the kernel representation in \eqref{nucleoFIOt} holds for every $t\in\bR$.
\end{proof}

\begin{remark}
(i)	Let us underline that this representation  holds for every $t\in\bR$, also in the so-called \emph{caustic points}, where the type I representation in \eqref{fio1E0} fails.\\
(ii) Theorem \ref{teofinal} is an extension of Theorem 6.1 in \cite{CRGFIO1}, since the H\"ormander class $S^0_{0,0}(\rdd)$  is obtained by intersection of modulation spaces:
$$S^0_{0,0}(\rdd)=\bigcap_{s\geq 0} M^{\infty,q}_{1\otimes v_s}(\rdd).$$
\end{remark}
Future developments of this theory could involve metaplectic Wigner distributions. Precisely, a new field of investigation would consist in  replacing  the Wigner kernel by a more general kernel, obtained applying   a shift-invertible metaplectic Wigner $W_\cA$ in \eqref{I3}, \eqref{I4}. We conjecture that similar results should be true.  
\section*{Acknowledgements}
The first two authors have been supported by the Gruppo Nazionale per l’Analisi Matematica, la Probabilità e le loro Applicazioni (GNAMPA) of the Istituto Nazionale di Alta Matematica (INdAM).

\end{document}